\newcommand{\xx}{{\bf x}}
\newtheorem{theorem}{Theorem}[section]
\newtheorem{proposition}{Proposition}[section]
\newtheorem{corollary}{Corollary}[section]
\newtheorem{remark}{Remark}[section]
\newtheorem*{Assumption*}{Assumption}
\newtheorem{problem}{Problem}[section]
\newtheorem*{problem*}{Problem}
\numberwithin{equation}{section}
\begin{document}

\title{A convergent numerical method for a multi-frequency inverse source problem in inhomogenous media}

\author{Loc H. Nguyen}
\author{Qitong Li}
\author{Michael V. Klibanov}

\affil{Department of Mathematics and Statistics, University of North Carolina, Charlotte, Charlotte, NC, 28223, USA, email: loc.nguyen@uncc.edu, qli13@uncc.edu, mklibanv@uncc.edu}
\date{}
\maketitle

\begin{abstract}
	A new numerical method to solve an inverse source problem for the Helmholtz
equation in inhomogenous media is proposed. This method reduces the original
inverse problem to a boundary value problem for a coupled system of elliptic
PDEs, in which the unknown source function is not involved. The Dirichlet
boundary condition is given on the entire boundary of the domain of interest
and the Neumann boundary condition is given on a part of this boundary. To
solve this problem, the quasi-reversibility method is applied. Uniqueness
and existence of the minimizer are proven. A new Carleman estimate is
established. Next, the convergence of those minimizers to the exact solution
is proven using that Carleman estimate. Results of numerical tests are
presented.
\end{abstract}

\noindent{\it Key words:}  
Inverse source problem, truncated Fourier
series, approximation, Carleman estimate, convergence.

\noindent{\it AMS subject classification: 	 35R30, 78A46.} 

\section{Introduction and the problem statement}

\label{sec 1}

In this paper, we propose a new numerical method to solve an inverse source
problem for the Helmholtz equation in the multi-frequency regime. This is
the problem of determining the unknown source from external measurement of
the wave field. It is worth mentioning that the inverse source problem has
uncountable real-world applications in electroencephalography, biomedical
imaging, etc., see e.g., \cite{AlbaneseMonk:ip2006,
AmmariBaoFlemming:SIAM2002, BaoLinTriki:jde2010, ChengIsakovLu:jde2016,
DassiosKariotou:jmp2003,
HeRomanov:wm1998, BadiaDuong:ip2000}.

Below $\mathbf{x}=\left( x_{1},...,x_{n-1}, z\right) \in \mathbb{R}^{n}.$
Let $\Omega $ be the cube $(-R,R)^{n}\subset \mathbb{R}^{n}$, $R \geq 1$,
and
\begin{equation}
\Gamma _{+}=\{\mathbf{x}\in \partial \Omega : z=R\}.  \label{1.1}
\end{equation}%
For $i,j=1,...,n$, let functions $a_{ij} \in C^1(\mathbb{R}^n) ,b_{j} \in C(%
\mathbb{R}^n),c \in C(\mathbb{R}^n) $ be such that:

\begin{enumerate}
\item For all $\mathbf{x}\in \mathbb{R}^{n}$
\begin{equation}
a_{ij}(\mathbf{x})=a_{ji}(\mathbf{x})\quad 1\leq i,j\leq n.  \label{1.2}
\end{equation}

\item There exist two constants $\mu _{1}$ and $\mu _{2}$ such that $0<\mu
_{1}\leq \mu _{2}$ and
\begin{equation}
\mu _{1}|\mathbf{\xi }|^{2}\leq \sum_{i,j=1}^{n}a_{ij}\left( \mathbf{x}%
\right) \xi _{i}\xi _{j}\leq \mu _{2}|\mathbf{\xi }|^{2}\quad \mbox{for all }%
\mathbf{x}\in \mathbb{R}^{n},\mathbf{\xi }\in \mathbb{R}^{n}.  \label{1.3}
\end{equation}

\item For all $\mathbf{x}\in \mathbb{R}^{n}\setminus \Omega $
\begin{equation}
a_{ij}(\mathbf{x})=\left\{
\begin{array}{ll}
1 & \mbox{if }i=j, \\
0 & \mbox{if }i\neq j.%
\end{array}%
\right.  \label{1.300}
\end{equation}

\item For all $\mathbf{x}\in \mathbb{R}^{n}\setminus \Omega $,
\begin{equation}
b_{j}(\mathbf{x})=c(\mathbf{x})=0.  \label{1.30}
\end{equation}
\end{enumerate}

We introduce the uniformly elliptic operator $L$ as follows
\begin{equation}
Lu=\sum_{i,j=1}^{n}a_{ij}(\mathbf{x})u_{x_{i}x_{j}}+\sum_{i=1}^{n}b_{i}(%
\mathbf{x})u_{x_{i}}+c(\mathbf{x})u\quad \mbox{for }u\in H^{2}(\mathbb{R}%
^{n}).  \label{100}
\end{equation}%
The principal part of this operator is%
\begin{equation}
L_{0}u=\sum_{i,j=1}^{n}a_{ij}(\mathbf{x})u_{x_{i}x_{j}}.  \label{101}
\end{equation}

Let $k>0$ be the wave number and $u=u(\mathbf{x},k)$ be the complex valued
wave field of wave number $k$, generated by the source function which has
the form of separable variables $g(k)f(\mathbf{x}),$ where functions $g\in
C^{1}[0,\infty )$ and $f\in C^{1}\left( \mathbb{R}^{n}\right) $. The wave
field $u(\mathbf{x},k)\in C^{2}(\mathbb{R}^{n}),k>0,$ satisfies the equation
\begin{equation}
Lu+k^{2}\mathbf{n}^{2}(\mathbf{x})u(\mathbf{x},k)=g(k)f(\mathbf{x}),\quad
\mathbf{x}\in \mathbb{R}^{n}  \label{1.41}
\end{equation}%
and the Sommerfeld radiation condition
\begin{equation}
\partial _{|\mathbf{x}|}u(\mathbf{x},k)-\mathrm{i}ku(\mathbf{x},k)=o(|%
\mathbf{x}|^{(1-n)/2}),\quad |\mathbf{x}|\rightarrow \infty .  \label{1.42}
\end{equation}%
Here, the function $\mathbf{n}\in C^{1}(\mathbb{R}^{n})$ is the refractive
index . We assume that
\begin{equation}
\mathbf{n}\left( \mathbf{x}\right) =1\quad \quad \mbox{for }\mathbf{x}\in
\mathbb{R}^{n}\setminus \Omega .  \label{1.6}
\end{equation}%
See \cite{ColtonKress:2013} for the well-posedness of problem \eqref{1.41}--%
\eqref{1.42} in the case $L=\Delta $. Given numbers $\underline{k}$ and $%
\overline{k}$ such that $0<\underline{k}<\overline{k}<\infty $ and assuming
that the function $g:[\underline{k},\overline{k}]\rightarrow \mathbb{R}$ is
known, we are interested in the following problem.

\begin{problem}[Inverse source problem with Cauchy data]
Assume that conditions (\ref{1.1})-(\ref{1.6}) are in place. Reconstruct the
function $f(\mathbf{x})$ for $\mathbf{x}\in \Omega $, given the functions $F$
and $G$, where
\begin{align}
F(\mathbf{x},k)& =u(\mathbf{x},k),\quad \mathbf{x}\in \partial \Omega ,k\in (%
\underline{k},\overline{k}),  \label{1.7} \\
G(\mathbf{x},k)& =\partial _{z}u(\mathbf{x},k),\quad \mathbf{x}\in \Gamma
_{+},k\in (\underline{k},\overline{k}),\label{1.8}
\end{align}%
where $u(\mathbf{x},k)$ is the solution of problem \eqref{1.41}, \eqref{1.42}%
. \label{ISP}
\end{problem}

Problem \ref{ISP} is somewhat over-determined due to the additional data $G(%
\mathbf{x}, k)$ measured on $\Gamma_+ \times [\underline k, \overline k]$.
We need this data for the convergence theorem. However, we notice in our
numerical experiments that our method works well without that additional
data. More precisely, in addition to Problem \ref{ISP}, we also consider the
following non-overdetermined problem.

\begin{problem}[Inverse source problem with Dirichlet data]
Assume that conditions (\ref{1.1})-(\ref{1.6}) are in place. Reconstruct the
functions $f(\mathbf{x})$, $\mathbf{x}\in \Omega $, given the following data
\begin{equation}
F(\mathbf{x},k) = u(\mathbf{x},k),\quad \mathbf{x}\in \partial \Omega ,k\in (%
\underline{k},\overline{k}),  \label{1.9}
\end{equation}%
where $u(\mathbf{x}, k)$ is the solution of \eqref{1.41}--\eqref{1.42}.\label%
{ISP1}
\end{problem}

\begin{remark}
In fact, the Dirichlet boundary data (\ref{1.9}) implicitly contain the
Neumann boundary data for the function $u$ on the entire boundary $\partial
\Omega .$ Indeed, for each $k\in (\underline{k},\overline{k})$ one can
uniquely solve equation (\ref{1.41}) with the radiation condition (\ref{1.42}%
) and boundary condition (\ref{1.9}) in the unbounded domain $\mathbb{R}%
^{n}\setminus \Omega.$
The resulting solution provides the Neumann boundary
condition $\partial_{\nu }u(\mathbf{x},k)$ for $\mathbf{x}\in \partial
\Omega,$ $k\in (\underline{k},\overline{k}),$ where $\nu $  is the unit
outward normal vector at $\partial \Omega .$
\end{remark}

This and similar inverse source problems for Helmholtz-like PDEs were
studied both analytically and numerically in \cite{BaoLinTriki:jde2010,
IsakovLu:SIAM2018}. In particular, in works \cite{EntekhabiIsakov:ip2018,
IsakovLu:SIAM2018} uniqueness and stability results were proven for the case
of constant coefficients in \eqref{1.41} and it was also shown that the
stability estimate improves when the frequency grows. In \cite%
{IsakovLu:ipi2018} uniqueness was proven for non constant coefficients. To
the best of our knowledge, past numerical methods for these problems are
based on various methods of the minimization of mismatched least squares
functionals. Good quality numerical solutions are obtained in \cite%
{BaoLinTriki:CRM2011, BaoLinTriki:cm2011, IsakovLu:ipi2018}. However, those
minimization procedures do not allow to establish convergence rates of
minimizers to the exact solution when the noise in the data tends to zero.
On the other hand, we refer here to the work \cite{WangMaGuoLi:jde2018, WangGuoZhangLiu:ip2017,  WangSongGuoLiLiu:jcam2019, ZhangGuo:ip2009, ZhangGuoLiu:ip2018},
in which a non-iterative method, based on a fresh idea, was proposed to
solve the inverse source problem for a homogenous medium.
This method is called the Fourier method for solving multifrequency inverse source problems.
 Uniqueness and
stability results were proven in \cite{WangGuoZhangLiu:ip2017} and good
quality numerical results were presented.
We would like to refer the reader to \cite{CaoLiu:preprint2018,  LiLiuSun:IPI2018, LiuUhlmann:ip2015, WangGuoLiLiu:ip2017, XiangSun:ip2019} for  some works studying inverse source problems that are related to the inverse problems in this paper.

In this paper, we solve the inverse source problem for inhomogeneous media.
We propose a new numerical method which enables us to establish convergence
rate of minimizers of a certain functional of the Quasi-Reversibility Method
(QRM) to the exact solution, as long as the noise in the data tends to zero.
Our method is based on four ingredients:

\begin{enumerate}
\item Elimination of the unknown source function $f\left( \mathbf{x}\right) $
from the original PDE via the differentiation with respect to $k$ of the
function $u(\mathbf{x},k)/g\left( k\right) .$

\item The use of a newly published \cite{Klibanov:jiip2017} orthonormal
basis in $L^{2}\left( \underline{k},\overline{k}\right) $ to obtain an
overdetermined boundary value problem for a system of coupled elliptic PDEs
of the second order.

\item The use of the QRM to find an approximate solution of that boundary
value problem.

\item The formulation and the proof of a new Carleman estimate for the
operator $L_{0}$ in (\ref{101}).

\item  In the case of Problem \ref{ISP}, the use of this Carleman estimate
for establishing the convergence rate of the minimizers of the QRM to the
exact solution, as long as the noise in the data tends to zero.
\end{enumerate}

Recently a similar idea was applied to develop a new numerical method for
the X-ray computed tomography with a special case of incomplete data \cite%
{KlibanovNguyen:ip2019} as well as to the development of a globally
convergent numerical method for a 1D coefficient inverse problem \cite%
{KlibanovKolesov:ip2018}. The above items 1, 4 and 5 have roots in the
Bukhgeim-Klibanov method, which was originally introduced in \cite%
{BukhgeimKlibanov:smd1981}. Even though there exists now a significant
number of publications on this method, we refer here only to a few of them
\cite{BeilinaKlibanovBook,
BellassouedYamamoto:SpKK2017,KlibanovTimonov:u2004,Klibanov:jiipp2013} since the current paper
is not about that method. The original goal of \cite%
{BukhgeimKlibanov:smd1981} was to prove uniqueness theorems for coefficient
inverse problems. Nowadays, however, ideas of this method are applied for
constructions of numerical methods for coefficient inverse problems and
other ill-posed problems, see, e.g. \cite%
{Klibanov:jiip2017,KlibanovKolesov:ip2018,KlibanovLiZhang:ip2019,KlibanovThanh:sjam2015}.

The quasi-reversibility method was first introduced by Latts and Lions in \cite{LattesLions:e1969} for numerical solutions of ill-posed problems for
partial differential equations. It has been studied intensively since then,
see e.g., \cite{Becacheelal:AIMS2015, Bourgeois:ip2006,
BourgeoisDarde:ip2010, BourgeoisPonomarevDarde:ipi2019, ClasonKlibanov:sjsc2007, Dadre:ipi2016, KaltenbacherRundell:ipi2019,
KlibanovSantosa:SIAMJAM1991, Klibanov:jiipp2013, LocNguyen:ip2019}. A survey
on this method can be found in \cite{Klibanov:anm2015}. The solution of the
system of the above item 2 due to the quasi-reversibility method is called
regularized solution in the theory of ill-posed problems \cite%
{Tihkonov:kapg1995}. Thus, by item 5 a new Carleman estimate allows us to
prove convergence of regularized solutions to the exact one as the noise in
the data tends to zero. We do this only for \ref{ISP}. In contrast we do not
investigate convergence of our method for Problem \ref{ISP1}. This problem
is studied only numerically below.

The paper is organized as follows. In Section \ref{sec 2}, we present the
numerical methods to solve Problems \ref{ISP} and \ref{ISP1}. Next, in
Section \ref{sec 3}, we discuss about the QRM for Problem \ref{ISP}. We
prove a new Carleman estimate in Section \ref{sec 4}. In section \ref{sec 5}%
, we prove the convergence of the regularized solutions to the true one. In
Section \ref{sec 6}, we describe the numerical implementations for both
Problems \ref{ISP} and \ref{ISP1} and present numerical results.

\section{Numerical Methods for Problems \protect\ref{ISP} and \protect\ref%
{ISP1}}

\label{sec 2}

We first recall a special basis of $L^{2}(\underline{k},\overline{k}),$
which was first introduced in \cite{Klibanov:jiip2017}.

\subsection{A special orthonormal basis in $L^{2}(\protect\underline{k},
\overline{k})$}

\label{sec 2.1}

For each $m\geq 1$, define $\phi _{m}(k)=(k-k_{0})^{m-1}\exp (k-k_{0})$
where $k_{0}=(\underline{k}+\overline{k})/2$. The sequence $\{\phi
_{m}\}_{m=1}^{\infty }$ is complete in $L^{2}(\underline{k},\overline{k})$.
Applying the Gram-Schmidt orthonormalization procedure to the sequence $%
\{\phi _{m}\}_{m=1}^{\infty }$, we obtain an orthonormal basis in $L^{2}(%
\underline{k},\overline{k}),$ denoted by $\{\Psi _{m}\}_{m=1}^{\infty }$. It
is not hard to verify that for each $m,$ the function $\Psi _{m}(k)$ has the
form
\begin{equation*}
\Psi _{m}(k)=P_{m-1}(k-k_{0})\exp (k-k_{0}),
\end{equation*}%
where $P_{m-1}$ is a polynomial of the degree $(m-1)$. This leads to the
following result, which plays an important role in our analysis.

\begin{proposition}[see \protect\cite{Klibanov:jiip2017}]
For $m,r\geq 1$, we have
\begin{equation}
d_{mr}=\int_{\underline{k}}^{\overline{k}}\Psi _{m}(k)\Psi _{r}^{\prime
}(k)dk=\left\{
\begin{array}{ll}
1 & \mbox{if }r=m, \\
0 & \mbox{if }r<m.%
\end{array}%
\right.  \label{1}
\end{equation}%
Consequently, let $N>1$ be an integer. Then the $N\times N$ matrix
\begin{equation}
D_{N}=(d_{mr})_{m,r=1}^{N}  \label{matrix D}
\end{equation}
has determinant $1$ and is invertible. \label{prop MK}
\end{proposition}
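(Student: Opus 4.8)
The plan is to exploit the structural form $\Psi _{m}(k)=P_{m-1}(k-k_{0})\exp (k-k_{0})$, already recorded just above the statement, together with the orthonormality of the basis $\{\Psi _{m}\}$. First I would differentiate: by the product rule,
\[
\Psi _{r}^{\prime }(k)=\big[P_{r-1}^{\prime }(k-k_{0})+P_{r-1}(k-k_{0})\big]\exp (k-k_{0})=\Psi _{r}(k)+\chi _{r}(k),
\]
where $\chi _{r}(k)=P_{r-1}^{\prime }(k-k_{0})\exp (k-k_{0})$. Since $P_{r-1}$ has degree $r-1$, its derivative $P_{r-1}^{\prime }$ has degree $r-2$ (and $\chi _{1}\equiv 0$), so $\chi _{r}$ is a polynomial of degree at most $r-2$ multiplied by $\exp (k-k_{0})$. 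The crucial observation is that, because Gram--Schmidt preserves the nested subspaces generated by the sequence, $\mathrm{span}\{\Psi _{1},\dots ,\Psi _{j}\}=\mathrm{span}\{\phi _{1},\dots ,\phi _{j}\}$ coincides with the set of all functions of the form $Q(k-k_{0})\exp (k-k_{0})$ with $\deg Q\leq j-1$. Hence $\chi _{r}\in \mathrm{span}\{\Psi _{1},\dots ,\Psi _{r-1}\}$ for every $r$.

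With this decomposition in hand, both cases of \eqref{1} follow immediately from orthonormality. When $r=m$, I would write $d_{mm}=\int_{\underline{k}}^{\overline{k}}\Psi _{m}\Psi _{m}\,dk+\int_{\underline{k}}^{\overline{k}}\Psi _{m}\chi _{m}\,dk$; the first integral equals $1$ since $\Psi _{m}$ is normalized in $L^{2}(\underline{k},\overline{k})$, while the second vanishes because $\chi _{m}\in \mathrm{span}\{\Psi _{1},\dots ,\Psi _{m-1}\}$, a space orthogonal to $\Psi _{m}$. When $r<m$, both $\Psi _{r}$ and $\chi _{r}$ lie in $\mathrm{span}\{\Psi _{1},\dots ,\Psi _{r}\}\subseteq \mathrm{span}\{\Psi _{1},\dots ,\Psi _{m-1}\}$, which is orthogonal to $\Psi _{m}$; therefore $d_{mr}=0$. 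This establishes \eqref{1}.

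For the determinant claim, I would observe that \eqref{1} says precisely that $d_{mr}=0$ whenever the row index exceeds the column index (that is, $m>r$) and $d_{mm}=1$, so $D_{N}$ is upper triangular with all diagonal entries equal to $1$. Its determinant is consequently the product of the diagonal entries, namely $1$, and a matrix with nonzero determinant is invertible.

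I do not anticipate a genuine obstacle here; the single point requiring care is the justification of the span identity $\mathrm{span}\{\Psi _{1},\dots ,\Psi _{j}\}=\{\,Q(k-k_{0})\exp (k-k_{0}):\deg Q\leq j-1\,\}$. This holds because each $\Psi _{j}$ is, by construction, a degree-$(j-1)$ polynomial times $\exp (k-k_{0})$, and the $j$ functions $\Psi _{1},\dots ,\Psi _{j}$, being orthonormal, are linearly independent, so they span the full $j$-dimensional space of such functions. Everything else is a direct consequence of the single identity $\Psi _{r}^{\prime }=\Psi _{r}+\chi _{r}$ with $\chi _{r}$ of strictly lower order.
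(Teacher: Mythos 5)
Your proof is correct, and since the paper offers no proof of its own (it defers entirely to \cite{Klibanov:jiip2017}), your argument—decomposing $\Psi_r'=\Psi_r+\chi_r$ with $\chi_r\in\mathrm{span}\{\Psi_1,\dots,\Psi_{r-1}\}$ via the Gram--Schmidt nesting property, then applying orthonormality and reading off upper-triangularity of $D_N$—is exactly the standard argument used in that reference. The one step you flagged, the span identity $\mathrm{span}\{\Psi_1,\dots,\Psi_j\}=\{Q(k-k_0)e^{k-k_0}:\deg Q\le j-1\}$, is justified correctly, so there are no gaps.
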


\begin{remark}
The basis $\{\Psi _{m}\}_{m=1}^{\infty }$ was first introduced in \cite%
{Klibanov:jiip2017}. Then, it was successfully used to numerically solve
nonlinear coefficient inverse problems \cite{KlibanovLiZhang:ip2019,
KlibanovKolesov:ip2018} and the inverse problem of X-ray tomography with
incomplete data \cite{KlibanovNguyen:ip2019}. \label{rem the uses of basis}
\end{remark}

\subsection{Truncated Fourier series}

\label{sec 2.2}

Assume that in (\ref{1.41}) $g(k)\neq 0,\forall k\in \lbrack \underline{k},%
\overline{k}]$. Introduce the function $v(\mathbf{x},k),$
\begin{equation}
v(\mathbf{x},k)=\frac{u(\mathbf{x},k)}{g(k)},\quad \mathbf{x}\in \Omega
,k\in \lbrack \underline{k},\overline{k}].  \label{2.3}
\end{equation}%
Let $L$ be the elliptic operator defined in (\ref{100}). By \eqref{1.41}
\begin{equation}
L\left( v(\mathbf{x},k)\right) +k^{2}\mathbf{n}^{2}\left( \mathbf{x}\right)
v(\mathbf{x},k)=f(\mathbf{x}),\quad \mathbf{x}\in \Omega ,k\in \lbrack
\underline{k},\overline{k}].  \label{2.4}
\end{equation}%
To eliminate the unknown right hand side $f(\mathbf{x})$ from equation (\ref%
{2.4}), we differentiate it with respect to $k$ and obtain
\begin{equation}
L\left( \partial _{k}v(\mathbf{x},k)\right) +k^{2}\mathbf{n}^{2}(\mathbf{x}%
)\partial _{k}v(\mathbf{x},k)+2k\mathbf{n}^{2}(\mathbf{x})v(\mathbf{x}%
,k)=0,\quad \mathbf{x}\in \Omega ,k\in \lbrack \underline{k},\overline{k}].
\label{2.5}
\end{equation}%
It follows from (\ref{1.7}), (\ref{1.8}) and (\ref{2.3}) that in the case of
Problem \ref{ISP} the function $v$ satisfies the following boundary
conditions
\begin{equation}
v(\mathbf{x},k)=\frac{F(\mathbf{x},k)}{g(k)},\text{ \ \ }\mathbf{x}\in
\partial \Omega ,k\in \lbrack \underline{k},\overline{k}],  \label{2.6}
\end{equation}%
\begin{equation}
\partial _{z}v(\mathbf{x},k)=\frac{G(\mathbf{x},k)}{g(k)},\quad \mathbf{x}%
\in \Gamma _{+},k\in \lbrack \underline{k},\overline{k}].  \label{2.7}
\end{equation}%
In Problem \ref{ISP1} only condition (\ref{2.6}) holds.

Fix an integer $N\geq 1$. Recalling the orthonormal basis $\{\Psi
_{r}\}_{r=1}^{\infty }$ of $L^{2}(\underline{k},\overline{k})$ in Section %
\ref{sec 2.1}, we approximate
\begin{align}
v(\mathbf{x},k)&=\sum_{m=1}^{N}v_{m}(\mathbf{x})\Psi _{m}(k) \quad \mathbf{x}%
\in \Omega ,k\in \lbrack \underline{k},\overline{k}],  \label{v appr} \\
\partial _{k}v(\mathbf{x},k) &=\sum_{m=1}^{N}v_{m}(\mathbf{x})\Psi
_{m}^{\prime }(k) \quad \mathbf{x}\in \Omega ,k\in \lbrack \underline{k},%
\overline{k}],  \label{vk appr}
\end{align}
where
\begin{equation}
v_{m}(\mathbf{x})=\int_{\underline{k}}^{\overline{k}}v(\mathbf{x},k)\Psi
_{m}(k)dk \quad \mathbf{x}\in \Omega, m = 1, 2, \dots, N.  \label{2.8}
\end{equation}

\begin{remark}
Similarly with \cite{Klibanov:jiip2017,KlibanovLiZhang:ip2019,
KlibanovKolesov:ip2018,KlibanovNguyen:ip2019}, we assume here that the
truncated Fourier series \eqref{v appr} satisfies equation \eqref{2.4} and
that truncated Fourier series \eqref{v appr} and \eqref{vk appr}, taken
together, satisfy equation \eqref{2.5}. This is our \textbf{approximate
mathematical model}. Since we work with a numerical method, we accept this
approximation scheme. Our main goal below is to find numerically Fourier
coefficients $v_{m}(\mathbf{x}),$ $m=1,2,\dots ,N,$ of $v(\mathbf{x},k)$,
see \eqref{2.8}. If those Fourier coefficients are approximated, the target
unknown function $f(\mathbf{x})$ can be approximated as the right hand side
of \eqref{2.4}.
\end{remark}

\begin{remark}
The number $N$ is chosen numerically. Proving convergence of our method as $%
N\rightarrow \infty $ is very challenging and such proofs are very rare in
the field of ill-posed problems. Indeed, the intrinsic reason of this is the
ill-posedness of those problems. Therefore, we omit the proof of convergence
of our method as $N\rightarrow \infty .$ Nevertheless, a rich numerical
experience of a number of previous publications, see, e.g. \cite%
{KabanikhinSatybaevShishlenin:svp2005, KabanikhinShishlenin:jiip2011,
KabanikhinSabelfeldNovikovShishlenin:jiip2015, KlibanovThanh:sjam2015,
KlibanovLiZhang:ip2019, KlibanovKolesov:ip2018, KlibanovNguyen:ip2019}
indicates that this truncation technique still leads to good numerical
results.
\end{remark}

We now compare numerically the true function $v(\mathbf{x},k)$ with its
approximation \eqref{v appr}. and observe that their difference is small,
see Figure \ref{fig 1} for the illustration.
\begin{figure}[h]
\begin{center}
\subfloat[The real parts of the true and test
functions]{\includegraphics[width = .4\textwidth]{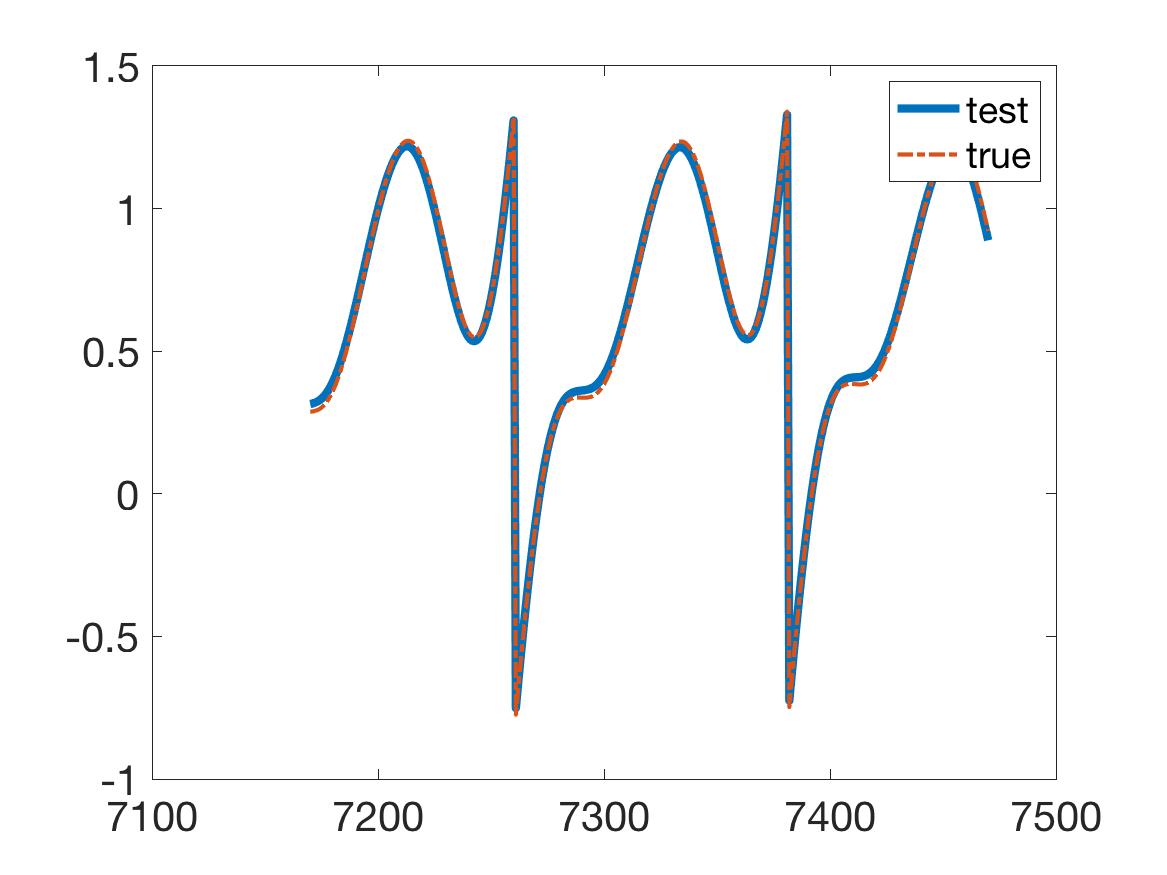}}\quad
\subfloat[The imaginary parts of the true and test
functions]{\includegraphics[width = .4\textwidth]{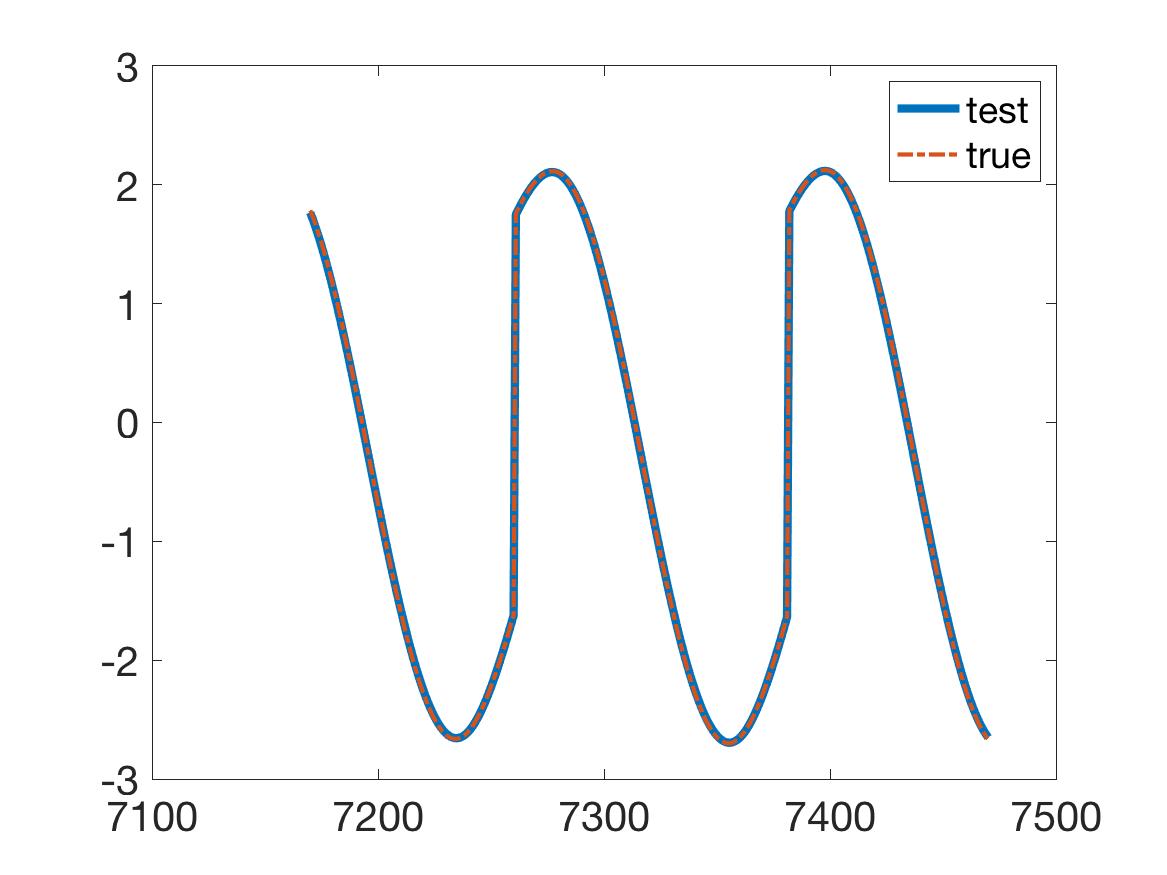}}
\end{center}
\caption{\textit{The comparison of the true function $v(\cdot
,k=1.5)=\sum_{m = 1}^{\infty }v_{m}(\mathbf{x})\Psi _{m}(k)$ and the test
function $\sum_{m=1}^{10}v_{m}(\cdot )\Psi _{m}(k)$ in Test 5, see Section
\protect\ref{sec 4}. In this test, we consider the case $n=2$ and $\Omega
=(-2,2)^{2}$. On $\Omega ,$ we arrange a uniform grid of $121\times 121$
points in $\Omega $. Those points are numbered from $1$ to $121^{2}$. In (a)
and (b), we respectively show the real and imaginary parts of the two
functions at 300 points numbered from 7170 to 7470. It is evident that
reconstructing the first 10 terms of the Fourier coefficients of $v(\mathbf{x%
},k)$ is sufficient to solve our inverse source problems.}}
\label{fig 1}
\end{figure}

Plugging (\ref{v appr}) and (\ref{vk appr}) in equation (\ref{2.5}), we
obtain
\begin{equation}
\sum_{r=1}^{N}\left( Lv_{r}\left( \mathbf{x}\right) \right) \Psi
_{r}^{\prime }(k)+\sum_{r=1}^{N}\left( \mathbf{n}^{2}(\mathbf{x})v_{r}\left(
\mathbf{x}\right) \right) \left( k^{2}\Psi _{r}^{\prime }(k)+2k\Psi
_{r}(k)\right) =0,\text{ }\mathbf{x}\in \Omega .  \label{2.9}
\end{equation}%
For each $m=1,...,N$, we multiply both sides of (\ref{2.9}) by the function $%
\Psi _{m}(k)$ and then integrate the resulting equation with respect to $%
k\in \left( \underline{k},\overline{k}\right) .$ We obtain%
\begin{equation*}
\sum_{r=1}^{N}\left( Lv_{r}\left( \mathbf{x}\right) \right) \int_{\underline{%
k}}^{\overline{k}}\Psi _{r}^{\prime }(k)\Psi _{m}(k)dk
\end{equation*}%
\begin{equation}
+\sum_{r=1}^{N}\left( \mathbf{n}^{2}(\mathbf{x})v_{r}\left( \mathbf{x}%
\right) \right) \int_{\underline{k}}^{\overline{k}}\left( k^{2}\Psi
_{r}^{\prime }(k)+2k\Psi _{r}(k)\right) \Psi _{m}(k)dk=0  \label{2.10}
\end{equation}%
for all $\mathbf{x}\in \Omega $, $m=1,2,\dots ,N.$ Denote%
\begin{equation}
V(\mathbf{x})=(v_{1}(\mathbf{x}),v_{2}(\mathbf{x}),\cdots ,v_{N}(\mathbf{x}%
))^{T}\quad \mathbf{x}\in \Omega ,  \label{2.100}
\end{equation}
\begin{equation}
S_{N}=\left( s_{mr}\right) _{m,r=1}^{N},\quad \mbox{with }s_{mr}=\int_{%
\underline{k}}^{\overline{k}}\left( k^{2}\Psi _{r}^{\prime }(k)+2k\Psi
_{r}(k)\right) \Psi _{m}(k)dk.  \label{2.101}
\end{equation}%
Then, \eqref{matrix D}, (\ref{2.10})-(\ref{2.101}) imply
\begin{equation}
D_{N}L\left( V\left( \mathbf{x}\right) \right) +S_{N}\mathbf{n}^{2}(\mathbf{x%
})V(\mathbf{x})=0,\text{ \ }\mathbf{x}\in \Omega ,  \label{2.11}
\end{equation}%
Denote
\begin{align}
\widetilde{F}(\mathbf{x})& =\left( \int_{\underline{k}}^{\overline{k}}\frac{%
F(\mathbf{x},k)}{g(k)}\Psi _{1}(k)dk,\dots ,\int_{\underline{k}}^{\overline{k%
}}\frac{F(\mathbf{x},k)}{g(k)}\Psi _{N}(k)dk\right) ^{T},\quad \mathbf{x}\in
\partial \Omega ,  \label{2.12} \\
\widetilde{G}(\mathbf{x})& =\left( \int_{\underline{k}}^{\overline{k}}\frac{%
G(\mathbf{x},k)}{g(k)}\Psi _{1}(k)dk,\dots ,\int_{\underline{k}}^{\overline{k%
}}\frac{G(\mathbf{x},k)}{g(k)}\Psi _{N}(k)dk\right) ^{T},\quad \mathbf{x}\in
\Gamma _{+}.  \label{2.13}
\end{align}%
It follows from (\ref{2.6}) and (\ref{2.7}) that in the case of \ref{ISP}
the vector function $V(\mathbf{x})$ satisfies the following two boundary
conditions:%
\begin{equation}
V\left( \mathbf{x}\right) =\widetilde{F}(\mathbf{x}),\text{ }\mathbf{x}\in
\partial \Omega ,  \label{2.14}
\end{equation}%
\begin{equation}
\partial _{\nu }V\left( \mathbf{x}\right) =\widetilde{G}(\mathbf{x}),\text{ }%
\mathbf{x}\in \Gamma _{+}.  \label{2.15}
\end{equation}%
And in the case of \ref{ISP1} only boundary condition (\ref{2.14}) takes
place.

These arguments lead to Algorithms \ref{alg 1} and \ref{alg 2} to solve
Problems \ref{ISP} and \ref{ISP1} respectively.

\begin{algorithm}
\caption{\label{alg 1}The procedure to solve Problem \ref{ISP}}\label{euclid}
\begin{algorithmic}[1]
	\State\, Choose a number $N$. Construct the functions $\Psi_m$, $1 \leq m \leq N,$ in Section \ref{sec 2.1} and compute the matrix $D_N$ as in Proposition \ref{prop MK}.
	\State\,  Calculate the boundary data $\widetilde F$ and $\widetilde G$ for the vector valued function $V$ via \eqref{2.12} and \eqref{2.13} respectively.
	\State\, \label{step quasi} Find an approximate solution of the system \eqref{2.11}, \eqref{2.14} and \eqref{2.15} via the quasi-reversibility method.
	\State\, \label{step vcomp} Having $V = (v_1, v_2, \dots, v_N)^T$ in hand, calculate $v_{\rm comp}(\xx, k)$ via \eqref{2.8}.
	\State\, \label{step fcomp} Compute the reconstructed function $f$ by \eqref{2.4}.
\end{algorithmic}
\end{algorithm}

\begin{algorithm}
\caption{\label{alg 2}The procedure to solve Problem \ref{ISP1}}\label{euclid}
\begin{algorithmic}[1]
	\State\, Choose a number $N$. Construct the functions $\Psi_m$, $1 \leq m \leq N,$ in Section \ref{sec 2.1} and compute the matrix $D_N$ as in Proposition \ref{prop MK}.
	\State\,  Calculate the boundary data $\widetilde F$ for the vector valued function $V$ via \eqref{2.12}.
	\State\, \label{step quasi} Solve the elliptic Dirichlet boundary value problem (\ref{2.11}), (\ref{2.14}).
	\State\, \label{step vcomp} Having $V = (v_1, v_2, \dots, v_N)^T$ in hand, calculate $v_{\rm comp}(\xx, k)$ via \eqref{2.8}.
	\State\, \label{step fcomp} Compute the reconstructed function $f$ by \eqref{2.4}.\end{algorithmic}
\end{algorithm}

In the next section, we briefly discuss the QRM used in Step \ref{step quasi}
of Algorithm \ref{alg 1}. We mention that the QRM is an efficient approach
to solve partial differential equations with over-determined boundary data.

\section{The quasi-reversibility method (QRM)}

\label{sec 3}

In this section, we present the QRM for the numerical solution of Problem %
\ref{ISP}. By saying below that a vector valued function belongs to a
Hilbert space, we mean that each of its components belongs to this space.
The norm of this vector valued function in that Hilbert space is naturally
defined as the square root of the sum of squares of norms of components.
Recall that by Proposition \ref{prop MK} the matrix $D_{N}$ is invertible.
Therefore, by (\ref{2.11}), (\ref{2.14}) and (\ref{2.15}) we need to find an
approximate solution of the following over-determined boundary value problem
with respect to the vector function $V(\mathbf{x})$%
\begin{align}
& L\left( V\left( \mathbf{x}\right) \right) +D_{N}^{-1}S_{N}\mathbf{n}^{2}(%
\mathbf{x})V(\mathbf{x})=0, & & \mathbf{x}\in \Omega ,  \label{2.16} \\
& V\left( \mathbf{x}\right) =\widetilde{F}(\mathbf{x}), & & \mathbf{x}\in
\partial \Omega ,  \label{2.17} \\
& \partial _{\nu }V\left( \mathbf{x}\right) =\widetilde{G}(\mathbf{x}), & &
\mathbf{x}\in \Gamma _{+}.  \label{2.18}
\end{align}%
To do this, we consider the following minimization problem:

\begin{problem}[Minimization Problem]
Let $\epsilon \in \left( 0,1\right) $
be the regularization parameter. Minimize the functional $J_{\epsilon }(V),$%
\begin{equation}
J_{\epsilon }(V)=\int_{\Omega }\left\vert L\left( V\left( \mathbf{x}\right)
\right) +D_{N}^{-1}S_{N}\mathbf{n}^{2}(\mathbf{x})V(\mathbf{x})\right\vert
^{2}d\mathbf{x+}\epsilon \Vert V\Vert _{H^{2}\left( \Omega \right) }^{2},
\label{2.19}
\end{equation}%
on the set of $N-$D vector valued functions $V\in H^{2}\left( \Omega \right)
$ satisfying boundary conditions \eqref{2.17} and \eqref{2.18}.
\end{problem}

We assume that the set of vector functions indicated in the formulation of
this problem is non empty; i.e., we assume that there exists an $N-$D vector
valued function $\Phi $ such that the set
\begin{equation}
\left\{ \Phi \in H^{2}\left( \Omega \right) ,\Phi \mid _{\partial \Omega }=%
\widetilde{F}(\mathbf{x}),\partial _{\nu }\Phi \mid _{\Gamma _{+}}=%
\widetilde{G}(\mathbf{x})\right\}   \label{2.20}
\end{equation}
is nonempty.
\begin{theorem}
Assume that there exists an $N-$D vector valued function $\Phi $ belonging to the set defined in
\eqref{2.20}. Then for each $\epsilon > 0,$ there exists a unique minimizer $%
V_{\min ,\epsilon }\in H^{2}(\Omega) $ of the functional $J_{\epsilon}$ in %
\eqref{2.19} that satisfies boundary conditions \eqref{2.17} and \eqref{2.18}%
. \label{thm min}
\end{theorem}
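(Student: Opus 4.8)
The plan is to recognize $J_{\epsilon}$ as a strictly convex, coercive quadratic functional over an affine subspace of the Hilbert space $H^{2}(\Omega)$, and to deduce existence and uniqueness of the minimizer from the Lax--Milgram lemma (the symmetry of the form would also permit a direct appeal to the Riesz representation theorem). Throughout I abbreviate the operator in \eqref{2.16} by $\mathcal{A}V = L(V) + D_{N}^{-1}S_{N}\mathbf{n}^{2}V$, so that $J_{\epsilon}(V) = \|\mathcal{A}V\|_{L^{2}(\Omega)}^{2} + \epsilon\|V\|_{H^{2}(\Omega)}^{2}$. Since the coefficients $a_{ij},b_{i},c$ and the refractive index $\mathbf{n}$ are continuous, hence bounded on $\overline{\Omega}$, and $D_{N}^{-1}S_{N}$ is a fixed matrix, $\mathcal{A}$ is a bounded linear operator from $H^{2}(\Omega)$ into $L^{2}(\Omega)$; this yields the estimate $\|\mathcal{A}V\|_{L^{2}(\Omega)} \leq C\|V\|_{H^{2}(\Omega)}$ that I will use repeatedly.

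First I would homogenize the boundary data. Using the function $\Phi$ furnished by hypothesis \eqref{2.20}, I write every admissible $V$ as $V = W + \Phi$, where $W$ ranges over
\[
\mathbb{X} = \left\{ W \in H^{2}(\Omega) : W\mid_{\partial\Omega} = 0, \ \partial_{\nu}W\mid_{\Gamma_{+}} = 0 \right\}.
\]
By continuity of the Dirichlet and Neumann trace operators on $H^{2}(\Omega)$, the set $\mathbb{X}$ is a closed linear subspace, hence itself a Hilbert space, and minimizing $J_{\epsilon}$ over the affine set in \eqref{2.20} is equivalent to minimizing $W \mapsto J_{\epsilon}(W + \Phi)$ over $\mathbb{X}$.

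Next I would record the variational (Euler--Lagrange) characterization. Expanding $J_{\epsilon}(W+\Phi)$ and differentiating along admissible perturbations $\psi \in \mathbb{X}$ shows that $V_{\min,\epsilon} = W + \Phi$ minimizes $J_{\epsilon}$ if and only if $W \in \mathbb{X}$ solves
\[
a(W,\psi) = \ell(\psi) \quad \text{for every } \psi \in \mathbb{X},
\]
where $a(W,\psi) = \langle \mathcal{A}W, \mathcal{A}\psi \rangle_{L^{2}(\Omega)} + \epsilon \langle W, \psi \rangle_{H^{2}(\Omega)}$ is a bounded symmetric (Hermitian) bilinear form and $\ell(\psi) = -\langle \mathcal{A}\Phi, \mathcal{A}\psi \rangle_{L^{2}(\Omega)} - \epsilon \langle \Phi, \psi \rangle_{H^{2}(\Omega)}$ is a bounded linear functional, boundedness of both following from the operator bound on $\mathcal{A}$ together with Cauchy--Schwarz. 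The decisive point is coercivity, which holds precisely because the regularization term is present:
\[
a(W,W) = \|\mathcal{A}W\|_{L^{2}(\Omega)}^{2} + \epsilon\|W\|_{H^{2}(\Omega)}^{2} \geq \epsilon\|W\|_{H^{2}(\Omega)}^{2}.
\]
Thus $a$ is coercive on $\mathbb{X}$ with constant $\epsilon > 0$, and Lax--Milgram produces a unique $W \in \mathbb{X}$, hence a unique minimizer $V_{\min,\epsilon} = W + \Phi \in H^{2}(\Omega)$ obeying \eqref{2.17} and \eqref{2.18}.

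I do not expect a serious obstacle: this is a standard application of Tikhonov regularization theory. The only points that require care are that $\mathbb{X}$ is genuinely closed, which rests on the trace theorems for $H^{2}(\Omega)$, and the operator bound $\|\mathcal{A}V\|_{L^{2}(\Omega)} \leq C\|V\|_{H^{2}(\Omega)}$, which needs only boundedness of the coefficients on $\overline{\Omega}$. It is worth emphasizing that coercivity, and with it both existence and uniqueness, rests entirely on the penalty term $\epsilon\|V\|_{H^{2}(\Omega)}^{2}$: without it the form $a$ could degenerate, since the over-determined operator $\mathcal{A}$ need not have trivial kernel or closed range on $\mathbb{X}$. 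In the complex-valued setting one replaces the inner products by their real parts so that the minimization takes place over a real Hilbert space, but this alters nothing essential.
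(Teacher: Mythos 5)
Your proposal is correct and follows essentially the same route as the paper: homogenize with $\Phi$, work in the closed subspace of $H^{2}(\Omega)$ functions with vanishing Dirichlet trace on $\partial\Omega$ and Neumann trace on $\Gamma_{+}$, write the Euler--Lagrange identity, and conclude from boundedness plus $\epsilon$-coercivity of the quadratic form. The paper phrases the final step as introducing a new scalar product $\left\{ \cdot ,\cdot \right\}$ whose norm is equivalent to $\left\Vert \cdot \right\Vert _{H^{2}\left( \Omega \right) }$ and invoking the Riesz theorem, which for a symmetric coercive bounded form is exactly your Lax--Milgram argument (a connection you yourself note).
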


\begin{proof}
The proof of Theorem \ref{thm min} is based on the
variational principle and Riesz theorem. Let $\left( \cdot ,\cdot \right) $
and $\left[ \cdot ,\cdot \right] $ denote scalar products in Hilbert spaces $%
L^{2}\left( \Omega \right) $ and $H^{2}\left( \Omega \right) $ respectively
of $N-$D vector valued functions. For any vector valued function $V\in
H^{2}\left( \Omega \right) $ satisfying boundary conditions \eqref{2.17} and %
\eqref{2.18}, set
\begin{equation}
W(\mathbf{x})=V(\mathbf{x})-\Phi (\mathbf{x}),\quad \mbox{\xx}\in \Omega .
\label{2.200}
\end{equation}%
By (\ref{2.20}) $W\in H_{0,\#}^{2}\left( \Omega \right) ,$ where
\begin{equation}
H_{0,\#}^{2}\left( \Omega \right) =\left\{ w\in H^{2}\left( \Omega \right)
:w\mid _{\partial \Omega }=0,\partial _{\nu }w\mid _{\Gamma _{+}}=0\right\} .
\label{2.22}
\end{equation}%
Clearly $H_{0,\#}^{2}\left( \Omega \right) $ is a closed subspace of the
space $H^{2}\left( \Omega \right) .$ Let $V_{\min ,\epsilon }$ be any
minimizer of the functional (\ref{2.19}), if it exists. Denote
\begin{equation}
W_{\min ,\epsilon }=V_{\min ,\epsilon }-\Phi .  \label{2.201}
\end{equation}%
By the variational principle the following identity holds
\begin{multline}
\left( L\left( W_{\min ,\epsilon }\left( \mathbf{x}\right) \right)
+D_{N}^{-1}S_{N}\mathbf{n}^{2}(\mathbf{x})W_{\min ,\epsilon }(\mathbf{x}%
),L\left( P\left( \mathbf{x}\right) \right) +D_{N}^{-1}S_{N}\mathbf{n}^{2}(%
\mathbf{x})P(\mathbf{x})\right)
\\
+\epsilon \left[ W_{\min ,\epsilon },P\right]
=\left( L\left( \Phi \left( \mathbf{x}\right) \right) +D_{N}^{-1}S_{N}%
\mathbf{n}^{2}(\mathbf{x})\Phi (\mathbf{x}),L\left( P\left( \mathbf{x}%
\right) \right) +D_{N}^{-1}S_{N}\mathbf{n}^{2}(\mathbf{x})P(\mathbf{x}%
)\right)
\\
+\epsilon \left[ \Phi ,P\right] ,  \label{2.21}
\end{multline}%
for all $P\in H_{0,\#}^{2}(\Omega ).$ The left hand side of the identity %
\eqref{2.21} generates a new scalar product $\left\{ \cdot ,\cdot \right\} $
in the space $H_{0,\#}^{2}\left( \Omega \right) .$ The corresponding norm $%
\left\{ \cdot \right\} $ is equivalent to the standard norm $\left\Vert
\cdot \right\Vert _{H^{2}\left( \Omega \right) }.$ Hence, (\ref{2.21}) is
equivalent with%
\begin{equation*}
\left\{ W_{\min ,\epsilon },P\right\}
\end{equation*}%
\begin{equation}
=\left( L\left( \Phi \left( \mathbf{x}\right) \right) +D_{N}^{-1}S_{N}%
\mathbf{n}^{2}(\mathbf{x})\Phi (\mathbf{x}),L\left( P\left( \mathbf{x}%
\right) \right) +D_{N}^{-1}S_{N}\mathbf{n}^{2}(\mathbf{x})P(\mathbf{x}%
)\right) +\epsilon \left[ \Phi ,P\right]  \label{2.23}
\end{equation}
for all $P\in H_{0,\#}^{2}\left( \Omega \right) .$ On the other hand, the
right hand side of (\ref{2.23}) can be estimated as%
\begin{multline*}
\left\vert \left( L\left( \Phi \left( \mathbf{x}\right) \right)
+D_{N}^{-1}S_{N}\mathbf{n}^{2}(\mathbf{x})\Phi (\mathbf{x}),L\left( P\left(
\mathbf{x}\right) \right) +D_{N}^{-1}S_{N}\mathbf{n}^{2}(\mathbf{x})P(%
\mathbf{x})\right) +\epsilon \left[ \Phi ,P\right] \right\vert
\\
\leq C_{1}\left\{ \Phi \right\} \left\{ P\right\} ,
\end{multline*}%
where the number $C_{1}=C_{1}\left( L,D_{N}^{-1}S_{N},\mathbf{n}%
^{2},\epsilon \right) >0$ depends only on listed parameters. Hence, the
right hand side of (\ref{2.23}) can be considered as a bounded linear
functional $l_{\Phi }\left( P\right) :H_{0}^{2}\left( \Omega \right)
\rightarrow \mathbb{C}.$ By Riesz theorem there exists unique vector
function $Q\in H_{0,\#}^{2}\left( \Omega \right) $ such that
\begin{equation*}
\left\{ W_{\min ,\epsilon },P\right\} =\left\{ Q,P\right\} ,\quad
\mbox{for
all }P\in H_{0,\#}^{2}\left( \Omega \right) ,
\end{equation*}%
directly yielding the identity (\ref{2.23}). As a consequence, $W_{\min
,\epsilon }$ exists and; indeed, $W_{\min ,\epsilon }=Q.$ Finally, by (\ref%
{2.201}) $V_{\min ,\epsilon }=W_{\min ,\epsilon }+\Phi .$
\end{proof}

The minimizer $V_{\min ,\epsilon }$ of $J_{\epsilon },$ subject to the
constraints \eqref{2.17} and \eqref{2.18} is called the \textit{regularized
solution} of the problem \eqref{2.16}, \eqref{2.17} and \eqref{2.18}. In the
theory of Ill-Posed Problems, it is important to prove convergence of
regularized solutions to the true one as the noise in the data tends to zero
\cite{Tihkonov:kapg1995}. In the next section, we establish a Carleman estimate for general
elliptic operators. This estimate is essential for the proof of that
convergence result in our problem, see Section \ref{sec 3}.

\section{A Carleman estimate for general elliptic operators}

\label{sec 4}

For brevity, we assume that the function $u$ in Theorem \ref{Thm Carleman}
is a real valued one. Indeed, this theorem holds true for complex valued
function $u$. This fact follows directly from the theorem itself. Hence, in
this section, we redefine the space $H_{0,\#}^{2}\left( \Omega \right) $ in
(\ref{2.22}) as the set of all real valued functions satisfying the same
constraints.
Recall the operator the uniformly elliptic operator $L_{0}$ in (\ref{101}).

\begin{theorem}[Carleman estimate]
Let the number $b>R$. Let the coefficients $a_{ij}\left( \mathbf{x}\right) $
of the uniformly elliptic operator $L_{0}$ defined in (\ref{101}) satisfy
conditions (\ref{1.2}), (\ref{1.3}) and also $a_{ij}\in C^{1}(\overline{%
\Omega })$. Suppose that
\begin{equation}
a_{in}\left( \mathbf{x}\right) =0,\quad \mbox{ for  }\mathbf{x}\in \partial
\Omega \setminus \left\{ z=\pm R\right\} ,i\neq n.  \label{3.1}
\end{equation}%
Then there exist numbers
\begin{equation*}
p_{0}=p_{0}\left( \mu _{1},\mu _{2},b,n,R,\max_{ij}\left\Vert
a_{ij}\right\Vert _{C^{1}\left( \overline{\Omega }\right) }\right) >1
\end{equation*}%
and
\begin{equation*}
\lambda _{0}=\lambda _{0}\left( \mu _{1},\mu _{2},b,n,R,\max_{ij}\left\Vert
a_{ij}\right\Vert _{C^{1}\left( \overline{\Omega }\right) }\right) \geq 1,
\end{equation*}%
both of which depend only on listed parameters, such that the following
Carleman estimate holds:%
\begin{equation}
\int_{\Omega }\left( L_{0}u\right) ^{2}\exp \left[ 2\lambda \left(
z+b\right) ^{p}\right] d\mathbf{x}\mathbf{\geq }C_{2}\lambda \int_{\Omega }%
\left[ \left( \nabla u\right) ^{2}+\lambda ^{2}u^{2}\right] \exp \left[
2\lambda \left( z+b\right) ^{p}\right] d\mathbf{x},  \label{3.2}
\end{equation}%
for all $\lambda \geq \lambda _{0},$ $p\geq p_{0}$ and $u\in
H_{0,\#}^{2}\left( \Omega \right) $. Here, the constant
\begin{equation*}
C_{2}=C_{2}\left( \mu _{1},\mu _{2},b,p,n,R,\max_{ij}\left\Vert
a_{ij}\right\Vert _{C^{1}\left( \overline{\Omega }\right) }\right) >0
\end{equation*}%
depends only on listed parameters. \label{Thm Carleman}
\end{theorem}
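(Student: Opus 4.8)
The plan is to prove \eqref{3.2} by the classical Bukhgeim--Klibanov route: first reduce to the Carleman weight via the substitution $v=u\exp[\lambda\varphi]$, where $\varphi(z)=(z+b)^p$, so that the weight in \eqref{3.2} is $\exp(2\lambda\varphi)$. Since $b>R$ and $z\geq-R$ on $\overline{\Omega}$, we have $z+b\geq b-R>0$, hence $\varphi_z=p(z+b)^{p-1}>0$ and $\varphi_{zz}=p(p-1)(z+b)^{p-2}>0$; this strong convexity in the $z$–direction, which grows with $p$, is the engine of the argument. Writing $u=ve^{-\lambda\varphi}$ and using that $\varphi$ depends only on $z$ together with the symmetry \eqref{1.2}, I would first establish the pointwise identity
\begin{equation*}
e^{\lambda\varphi}L_0 u = L_0 v + \lambda^2 a_{nn}\varphi_z^2\,v - 2\lambda\varphi_z\sum_{j=1}^{n} a_{nj}v_{x_j} - \lambda a_{nn}\varphi_{zz}\,v =: A_1 v + A_2 v,
\end{equation*}
where $A_1 v=L_0 v+\lambda^2 a_{nn}\varphi_z^2 v$ collects the second order part and the leading zeroth order term, and $A_2 v$ the remaining first and zeroth order terms.

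Next I would use the elementary pointwise bound $(e^{\lambda\varphi}L_0u)^2\geq 2\,(A_1v)(A_2v)$ and integrate $2\int_\Omega (A_1v)(A_2v)\,d\mathbf{x}$ by parts. The cross term pairing $\lambda^2 a_{nn}\varphi_z^2 v$ with the first order part of $A_2v$ produces, after integration by parts, a volume contribution of order $\lambda^3\varphi_z^2\varphi_{zz}\,v^2$, which by ellipticity \eqref{1.3} (so that $a_{nn}\geq\mu_1$) is positive; this yields $\int_\Omega(e^{\lambda\varphi}L_0u)^2\,d\mathbf{x}\geq c\lambda^3\int_\Omega v^2\,d\mathbf{x}$ modulo lower order terms. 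All remaining volume terms carry either lower powers of $\lambda$, or factors $\nabla a_{ij}$ which are bounded since $a_{ij}\in C^1(\overline{\Omega})$, or $\varphi_z$ relative to $\varphi_{zz}$; choosing first $p\geq p_0$ large enough that the convexity $\varphi_{zz}$ dominates the $C^1$ perturbation of the coefficients, and then $\lambda\geq\lambda_0$ large enough to absorb the rest, these are swallowed. Simultaneously I would track the boundary integrals over the three types of faces of $\partial\Omega$: on all of $\partial\Omega$ the condition $u|_{\partial\Omega}=0$ from \eqref{2.22} gives $v|_{\partial\Omega}=0$, so only normal derivatives of $v$ survive; on $\Gamma_+=\{z=R\}$ the extra Neumann condition $\partial_\nu u|_{\Gamma_+}=0$ forces $v_z=0$ as well, so $\nabla v=0$ there and every top-face term drops out (this is exactly where the over-determination of Problem \ref{ISP} is used); on the lateral faces $\{x_i=\pm R,\ i<n\}$ the only indefinite-sign boundary terms carry the mixed coefficients $a_{in}$, and these are killed precisely by \eqref{3.1}; on the bottom face $\{z=-R\}$ the surviving term is proportional to $\lambda\varphi_z a_{nn}(\partial_z v)^2$ with a sign fixed by the outward normal $-\mathbf{e}_n$ and $\varphi_z>0$, which I expect to be nonnegative and hence discardable.

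Converting back via $v=ue^{\lambda\varphi}$, the above delivers $\int_\Omega(L_0 u)^2 e^{2\lambda\varphi}\,d\mathbf{x}\geq c\lambda^3\int_\Omega u^2 e^{2\lambda\varphi}\,d\mathbf{x}$, but because the weight depends only on $z$ the cross term controls by itself only the normal derivative $u_z$, not the full gradient. To recover the tangential derivatives I would add a separate elliptic (G\r{a}rding–type) estimate: integrating $-\lambda\int_\Omega (L_0 u)\,u\,e^{2\lambda\varphi}\,d\mathbf{x}$ by parts (the boundary term vanishing since $u|_{\partial\Omega}=0$) and applying \eqref{1.3}, then splitting the lower order products by a weighted Cauchy--Schwarz so as to retain only a small fixed multiple of $\int_\Omega(L_0u)^2e^{2\lambda\varphi}$, gives $\lambda\int_\Omega|\nabla u|^2 e^{2\lambda\varphi}\,d\mathbf{x}\leq C\eta\int_\Omega(L_0u)^2 e^{2\lambda\varphi}\,d\mathbf{x}+C\lambda^3\int_\Omega u^2 e^{2\lambda\varphi}\,d\mathbf{x}$. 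Feeding in the already-obtained bound $\lambda^3\int_\Omega u^2 e^{2\lambda\varphi}\leq c^{-1}\int_\Omega(L_0u)^2 e^{2\lambda\varphi}$ closes the loop and yields $\int_\Omega(L_0u)^2 e^{2\lambda\varphi}\geq C\lambda\int_\Omega|\nabla u|^2 e^{2\lambda\varphi}$ as a \emph{bounded} multiple. Averaging the two lower bounds produces \eqref{3.2} with $C_2=C_2(\mu_1,\mu_2,b,p,n,R,\max_{ij}\|a_{ij}\|_{C^1(\overline{\Omega})})$.

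I expect the main obstacle to be twofold. First, the careful bookkeeping of the boundary integrals: verifying the favorable sign of the bottom-face term and confirming that \eqref{3.1} removes every indefinite lateral-face contribution, while the conditions on $\Gamma_+$ eliminate the top-face terms entirely. Second, and more conceptually, the fact that a weight depending only on $z$ does not deliver the tangential gradient from the pointwise cross term alone (as is already visible for $L_0=\Delta$, where the $|\nabla v|^2$ contributions cancel and only $v_z^2$ and $v^2$ remain), so the a posteriori elliptic estimate must be arranged to cost only a small fixed fraction of $\int_\Omega(L_0u)^2 e^{2\lambda\varphi}$ rather than a $\lambda$–multiple of it; getting this constant right, together with organizing the variable-coefficient error terms so that the single threshold $p_0$ followed by $\lambda_0$ absorbs all lower order terms for the gradient and the zeroth order parts at once, is the delicate point.
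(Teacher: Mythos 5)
Your plan follows the paper's own proof in every structural respect: the same weight and substitution $v=u\exp [\lambda (z+b)^{p}]$, the same splitting of $e^{\lambda \varphi }L_{0}u$ into a second-order-plus-dominant-zeroth-order part and a first-order part (the paper's $y_{1}+y_{2}$ and $y_{3}+y_{4}$ in (\ref{3.4})--(\ref{3.70})), the same cross-product lower bound (\ref{3.8}), the same boundary bookkeeping (condition (\ref{3.1}) killing the lateral $a_{in}$ terms, the Neumann data on $\Gamma _{+}$ killing the top face, the favorable sign at $z=-R$), the same G\r{a}rding-type term $-(L_{0}u)\,u\,e^{2\lambda \varphi }$ (the paper's Step 5), and the same order of choosing $p_{0}$ before $\lambda _{0}$.

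There is, however, one step that fails as written, and it is the pivot of your sequential organization. You claim that the cross terms alone give $\int_{\Omega }(L_{0}u)^{2}e^{2\lambda \varphi }\,d\mathbf{x}\geq c\lambda ^{3}\int_{\Omega }u^{2}e^{2\lambda \varphi }\,d\mathbf{x}$ with ``all remaining volume terms \dots swallowed'' by taking $p$ and then $\lambda $ large. This is not so: integrating the cross terms by parts leaves a volume term of the form $-C\lambda p\int_{\Omega }(\nabla v)^{2}\,d\mathbf{x}$, which is of the same order as the gradient quantity you ultimately want and cannot be absorbed by any zeroth-order term, no matter how large $p$ and $\lambda $ are. (Your parenthetical that for $L_{0}=\Delta $ the $|\nabla v|^{2}$ contributions cancel is also incorrect: pairing $-2\lambda p(z+b)^{p-1}v_{z}$ with $v_{x_{i}x_{i}}$, $i<n$, and integrating by parts in $z$ differentiates the factor $(z+b)$ and produces $-2\lambda p\int v_{x_{i}}^{2}$, a genuinely negative tangential term; only $+2\lambda p\int v_{z}^{2}$ comes out with the good sign.) Consequently your scheme is circular as stated: the zeroth-order bound needs the elliptic estimate, which in your plan is invoked only after the zeroth-order bound. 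The paper resolves this by adding $4C_{3}\lambda p\mu _{1}^{-1}$ times the G\r{a}rding inequality (\ref{3.27}) to the cross-term inequality (\ref{8.8}) \emph{before} extracting either bound (its Step 6): the scaled G\r{a}rding term supplies $+2C_{3}\lambda p(\nabla u)^{2}$, cancelling the deficit, at the price of a zeroth-order loss of size $\lambda ^{3}p^{3}(z+b)^{2p-2}u^{2}$; this loss is beaten by the cross terms' gain $\lambda ^{3}p^{4}(z+b)^{2p-2}u^{2}$ only because of the extra power of $p$ --- that is exactly condition (\ref{3.31}). The cost on the left, $\lambda p\left\vert (L_{0}u)u\right\vert $, is then split by Cauchy--Schwarz into $C_{2}(L_{0}u)^{2}+C_{2}\lambda ^{2}u^{2}$, the latter absorbed by the $\lambda ^{3}$ term once $\lambda \geq \lambda _{0}$. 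Your closing remark about absorbing the gradient and zeroth-order parts ``at once'' gestures at precisely this simultaneous combination; once your Step 1 and your elliptic estimate are merged in this way, with the $p^{4}$ versus $p^{3}$ ratio tracked explicitly, your plan becomes the paper's proof.
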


\begin{proof}
Below in this proof $u\in C^{2}\left( \overline{\Omega }%
\right) \cap H_{0,\#}^{2}\left( \Omega \right) .$ The case $u\in
H_{0,\#}^{2}\left( \Omega \right) $ can be obtained via the density
argument. In this proof $C_{2}>0$ denotes different positive numbers
depending only on above listed parameters. On the other hand, everywhere
below $C_{3}=C_{3}\left( \mu _{1},\mu _{2},b,R,\max_{ij}\left\Vert
a_{ij}\right\Vert _{C^{1}\left( \overline{\Omega }\right) }\right) >0$ also
denotes different positive constants depending only on listed parameters but
independent on $p$, unlike $C_{2}.$ Also, in this proof $O\left( 1/\lambda
\right) $ denotes different functions belonging to $C^{1}\left( \overline{%
\Omega }\right) $ and satisfying the estimate
\begin{equation}
\left\Vert O\left( 1/\lambda \right) \right\Vert _{C^{1}\left( \overline{%
\Omega }\right) }\leq \frac{C_{2}}{\lambda }\quad \mbox{for all }\lambda
,p\geq 1.  \label{3.20}
\end{equation}%
Below $n-$D vector functions $U_{k}$ are such that
\begin{equation}
\int_{\partial \Omega }U_{r}\cdot \nu d\sigma \geq 0\quad r\in \left\{
1,...,14\right\} ,  \label{3.11}
\end{equation}%
where $U_{r}\cdot \nu $ means the scalar product of vectors $U_{r}$ and $\nu
$\ in $\mathbb{R}^{n}:$ recall that $\nu $ is the outward looking unit
normal vector on $\partial \Omega .$ In fact it follows from the proof that,
the integrals in (\ref{3.11}) equal zero for $r= 1,2.$ But they are
non-negative starting from $r=3$.

Introduce the new function $v\left( \mathbf{x}\right) =u\left( \mathbf{x}%
\right) \exp \left[ \lambda \left( z+b\right) ^{p}\right] .$ Then
\begin{equation*}
u\left( \mathbf{x}\right) =v\left( \mathbf{x}\right) \exp \left[ -\lambda
\left( z+b\right) ^{p}\right] .
\end{equation*}%
Using straightforward calculations, we obtain
\begin{align*}
u_{x_{i}x_{j}}& =v_{x_{i}x_{j}}\exp \left[ -\lambda \left( z+b\right) ^{p}%
\right] \quad \mbox{for }i,j=1,\dots ,n-1, \\
u_{x_{i}z}& =\left( v_{x_{i}z}-\lambda p\left( z+b\right)
^{p-1}v_{x_{i}}\right) \exp \left[ -\lambda \left( z+b\right) ^{p}\right] ,%
\text{ \ \ }\mbox{for }i,j=1,\dots ,n-1,
\end{align*}%
and
\begin{equation*}
u_{zz}=\left( v_{zz}-2\lambda p\left( z+b\right) ^{p-1}v_{z}+\lambda
^{2}p^{2}\left( z+b\right) ^{2p-2}\left( 1+O\left( 1/\lambda \right) \right)
v\right) \exp \left[ -\lambda \left( z+b\right) ^{p}\right] .
\end{equation*}%
Hence, \eqref{101} implies that
\begin{multline}
\left( L_{0}u\right) \exp \left[ \lambda \left( z+b\right) ^{p}\right]
\\
=\left[ \left(
\sum_{i,j=1}^{n-1}a_{ij}v_{x_{i}x_{j}}+%
\sum_{i=1}^{n-1}a_{in}v_{x_{i}z}+a_{nn}v_{zz}\right) +\left( \lambda
^{2}p^{2}\left( z+b\right) ^{2p-2}a_{nn}v\right) \right]
\\
-2\lambda p\left( z+b\right) ^{p-1}a_{nn}v_{z}-\lambda p\left( z+b\right)
^{p-1}\sum_{i=1}^{n-1}a_{in}v_{x_{i}}.
\label{3.3}
\end{multline}%
Denote terms in the right hand side of (\ref{3.3}) as $%
y_{1},y_{2},y_{3},y_{4}$. More precisely,
\begin{align}
y_{1}&
=\sum_{i,j=1}^{n-1}a_{ij}v_{x_{i}x_{j}}+%
\sum_{i=1}^{n-1}a_{in}v_{x_{i}z}+a_{nn}v_{zz},  \label{3.4} \\
y_{2}& =\lambda ^{2}p^{2}\left( z+b\right) ^{2p-2}a_{nn}v,  \label{3.6} \\
y_{3}& =-2\lambda p\left( z+b\right) ^{p-1}a_{nn}v_{z},  \label{3.7} \\
y_{4}& =-\lambda p\left( z+b\right) ^{p-1}\sum_{i=1}^{n-1}a_{in}v_{x_{i}}.
\label{3.70}
\end{align}%
It follows from (\ref{3.3}) that
\begin{align*}
\left( L_{0}u\right) ^{2}\exp \left[ 2\lambda \left( z+b\right) ^{p}\right]
\left( z+b\right) ^{2-p}
&=\left( y_{1}+y_{2}+y_{3}+y_{4}\right) ^{2}\left(
z+b\right) ^{2-p}
\\
&=\left( \left( y_{1}+y_{2}\right) +\left( y_{3}+y_{4}\right) \right)
^{2}\left( z+b\right) ^{2-p}.
\end{align*}%
Thus,%
\begin{equation}
\left( L_{0}u\right) ^{2}\exp \left[ 2\lambda \left( z+b\right) ^{p}\right]
\left( z+b\right) ^{2-p} 
\geq 2y_{3}\left( y_{1}+y_{2}\right) \left( z+b\right) ^{2-p}+2y_{4}\left(
y_{1}+y_{2}\right) \left( z+b\right) ^{2-p}.  \label{3.8}
\end{equation}%
We now estimate from the below each term in the right hand side of
inequality (\ref{3.8}) separately. We do this in several steps.

\vspace{4pt}\noindent
\textbf{Step 1}. Estimate from the below of the quantity $2y_{1}y_{3}\left(
z+b\right) ^{2-p}.$ By (\ref{3.4}) and (\ref{3.6}), we have
\begin{multline}
2y_{1}y_{3}\left( z+b\right) ^{2-p} \\
=-4\lambda p\left( z+b\right) a_{nn}v_{z}\left( \frac{1}{2}%
\sum_{i,j=1}^{n-1}\left( a_{ij}v_{x_{i}x_{j}}+a_{ij}v_{x_{j}x_{i}}\right)
+\sum_{i=1}^{n-1}a_{in}v_{x_{i}z}+a_{nn}v_{zz}\right) .  \label{3.9}
\end{multline}%
By the standard rules of the differentiation,
\begin{align*}
-2& \lambda p(z+b)a_{nn}v_{z}\left(
a_{ij}v_{x_{i}x_{j}}+a_{ij}v_{x_{j}x_{i}}\right) \\
& =-2\lambda p\Big[ \left( z+b\right) a_{nn}a_{ij}\left(
v_{z}v_{x_{i}}\right) _{x_{j}}-\left( z+b\right)
a_{nn}a_{ij}v_{zx_{j}}v_{x_{i}}
\\
&\hspace{7.5cm}-\left( z+b\right) \left( a_{nn}a_{ij}\right)
_{x_{j}}v_{z}v_{x_{i}}\Big] \\
& \quad -2\lambda p\Big[ \left( z+b\right) a_{nn}a_{ij}\left(
v_{z}v_{x_{j}}\right) _{x_{i}}-\left( z+b\right)
a_{nn}a_{ij}v_{zx_{i}}v_{x_{j}}
\\
&\hspace{7.5cm}
-\left( z+b\right) \left( a_{nn}a_{ij}\right)
_{x_{i}}v_{z}v_{x_{j}}\Big] \\
& =\left[ 2\lambda p\left( z+b\right) a_{nn}a_{ij}v_{x_{i}}v_{x_{j}}\right]
_{z}-2\lambda p\left( \left( z+b\right) a_{nn}a_{ij}\right)
_{z}v_{x_{i}}v_{x_{j}} \\
& \quad +\left( -2\lambda p\left( z+b\right)
a_{nn}a_{ij}v_{z}v_{x_{i}}\right) _{x_{j}}+4\lambda p\left( z+b\right)
\left( a_{nn}a_{ij}\right) _{x_{j}}v_{z}v_{x_{i}} \\
& \quad +\left( -2\lambda p\left( z+b\right)
a_{nn}a_{ij}v_{z}v_{x_{j}}\right) _{x_{i}}+4\lambda p\left( z+b\right)
\left( a_{nn}a_{ij}\right) _{x_{i}}v_{z}v_{x_{j}}.
\end{align*}%
Hence,
\begin{equation}
-2\lambda p\left( z+b\right) a_{nn}v_{z}\left(
a_{ij}v_{x_{i}x_{j}}+a_{ij}v_{x_{j}x_{i}}\right) \geq -C_{3}\lambda p\left(
\nabla v\right) ^{2}+\mbox{div}U_{1},  \label{3.10}
\end{equation}%
see (\ref{3.11}) for $U_{1}.$

Next, we estimate the term%
\begin{equation*}
-\sum_{i=1}^{n-1}4\lambda p\left( z+b\right) a_{nn}a_{in}v_{z}v_{x_{i}z}
\\
=\sum_{i=1}^{n-1}\left( -2\lambda p\left( z+b\right)
a_{nn}a_{in}v_{z}^{2}\right) _{x_{i}}+\sum_{i=1}^{n-1}\lambda p\left(
z+b\right) \left( a_{nn}a_{in}\right) _{x_{i}}v_{z}^{2}.
\end{equation*}%
Hence,
\begin{equation}
-\sum_{i=1}^{n-1}4\lambda p\left( z+b\right) a_{nn}a_{in}v_{z}v_{x_{i}z}\geq
-C_{3}\lambda pv_{z}^{2}+\mbox{div}U_{2}.  \label{3.100}
\end{equation}%
Now, $U_{2}\cdot \nu =0$ for $\mathbf{x}\in \partial \Omega $ for two
reasons: first, this is because $v_{z}\left( \mathbf{x}\right) =0$ for $%
x_{i}=\pm R$ and, second, due to condition (\ref{3.1}). Hence, due to the
first reason, we do not actually use here yet condition (\ref{3.1}).

Next, we estimate the term $-4\lambda p\left( z+b\right)
a_{nn}^{2}v_{z}v_{zz}$ in (\ref{3.9}),
\begin{equation}
-4\lambda p\left( z+b\right) a_{nn}^{2}v_{z}v_{zz}=\left( -2\lambda p\left(
z+b\right) a_{nn}^{2}v_{z}^{2}\right) _{z}+2\lambda p\left( \left(
z+b\right) a_{nn}^{2}\right) _{z}v_{z}^{2}.  \label{3.101}
\end{equation}%
Combining this with (\ref{3.9})-(\ref{3.101}), we conclude that
\begin{equation}
2y_{1}y_{3}\left( z+b\right) ^{2-p}\geq -C_{3}\lambda p\left( \nabla
v\right) ^{2}+\mbox{div}U_{3},  \label{3.12}
\end{equation}%
see (\ref{3.11}) for $U_{3}.$ Next,
\begin{align}
-C_{3}\lambda pv_{z}^{2}& =-C_{3}\lambda p\left( u_{z}^{2}+2\lambda p\left(
z+b\right) ^{p-1}u_{z}u+\lambda ^{2}p^{2}\left( z+b\right)
^{2p-2}u^{2}\right) \exp \left[ 2\lambda \left( z+b\right) ^{p}\right]
\notag \\
& =-C_{3}\lambda pu_{z}^{2}\exp \left[ 2\lambda \left( z+b\right) ^{p}\right]
-C_{3}\lambda ^{3}p^{3}\left( z+b\right) ^{2p-2}u^{2}\exp \left[ 2\lambda
\left( z+b\right) ^{p}\right]  \notag \\
& \hspace{2cm}+\left( -C_{3}\lambda ^{2}p^{2}\left( z+b\right)
^{p-1}u^{2}\exp \left[ 2\lambda \left( z+b\right) ^{p}\right] \right) _{z}
\label{3.14} \\
& \hspace{2cm}+2C_{3}\lambda ^{3}p^{3}\left( z+b\right) ^{2p-2}\left(
1+O\left( 1/\lambda \right) \right) u^{2}\exp \left[ 2\lambda \left(
z+b\right) ^{p}\right]  \notag \\
& \geq -C_{3}\lambda pu_{z}^{2}\exp \left[ 2\lambda \left( z+b\right) ^{p}%
\right] +\mbox{div}U_{4},  \notag
\end{align}%
see (\ref{3.11}) for $U_{4}.$ It follows from (\ref{3.10})-(\ref{3.14}) that%
\begin{equation}
2y_{1}y_{3}\left( z+b\right) ^{2-p}\geq -C_{3}\lambda p\left( \nabla
u\right) ^{2}\exp \left[ 2\lambda \left( z+b\right) ^{p}\right] +\mbox{div}%
U_{5},  \label{3.16}
\end{equation}%
see (\ref{3.11}) for $U_{5}.$

\vspace{4pt}\noindent
\textbf{Step 2}. Estimate from the below the quantity $2y_{2}y_{3}\left(
z+b\right) ^{2-p}.$ By (\ref{3.6}) and (\ref{3.7})
\begin{align}
2y_{2}y_{3}\left( z+b\right) ^{2-p}&=-4\lambda ^{3}p^{3}\left( z+b\right)
^{2p-1}a_{nn}^{2}v_{z}v
\nonumber
\\
&=\left( -2\lambda ^{3}p^{3}\left( z+b\right) ^{2p-1}a_{nn}^{2}v^{2}\right)
_{z}+2\lambda ^{3}p^{3}\left( 2p-1\right) \left( z+b\right)
^{2p-2}a_{nn}^{2}v^{2}
\nonumber
\\
&\hspace{5cm}
+2\lambda ^{3}p^{3}\left( z+b\right) ^{2p-1}\left( a_{nn}^{2}\right)
_{z}v^{2}
\nonumber
\\
&\geq 2\lambda ^{3}p^{3}\left( 2p-1\right) \left( z+b\right) ^{2p-2}\mu
_{1}^{2}\left( 1+\frac{\left( z+b\right) \left( a_{nn}^{2}\right) _{z}}{%
\left( 2p-1\right) \mu _{1}^{2}}\right) v^{2}
\nonumber
\\
&\hspace{5cm}+\left( -2\lambda
^{3}p^{3}\left( z+b\right) ^{2p-1}a_{nn}^{2}v^{2}\right) _{z}
\nonumber
\\
&\geq C_{3}\lambda ^{3}p^{4}\left( z+b\right) ^{2p-2}u^{2}\exp \left[
2\lambda \left( z+b\right) ^{p}\right] +\mbox{div}U_{6},
\label{3.17}
\end{align}%
see (\ref{3.11}) for $U_{6}.$ There exists a sufficiently large number $%
p_{0},$
\begin{equation*}
p_{0}=p_{0}\left( \mu _{1},\mu _{2},b,n,R,\max_{ij}\left\Vert
a_{ij}\right\Vert _{C^{1}\left( \overline{\Omega }\right) }\right) >1
\end{equation*}%
such that
\begin{equation}
1+\frac{\left( z+b\right) \left( a_{nn}^{2}\right) _{z}}{\left( 2p-1\right)
\mu _{1}^{2}}\geq \frac{1}{2},\quad \mbox{for all }p\geq p_{0}.
\label{3.170}
\end{equation}%
Hence, (\ref{3.16})-(\ref{3.170}) imply that for $p\geq p_{0}$
\begin{multline}
2\left( y_{1}+y_{2}\right) y_{3}\left( z+b\right) ^{2-p}\geq -C_{3}\lambda
p\left( \nabla u\right) ^{2}\exp \left[ 2\lambda \left( z+b\right) ^{p}%
\right]
\\
+C_{3}\lambda ^{3}p^{4}\left( z+b\right) ^{2p-2}u^{2}\exp \left[ 2\lambda
\left( z+b\right) ^{p}\right] +\mbox{div}U_{7},  \label{3.21}
\end{multline}%
see (\ref{3.11}) for $U_{7}.$

\vspace{4pt}\noindent
\textbf{Step 3.} Estimate $2y_{1}y_{4}\left( z+b\right) ^{2-p},$ see (\ref%
{3.8}); i.e., estimate
\begin{equation}
\left( -2\lambda p\left( z+b\right) \sum_{k=1}^{n-1}a_{kn}v_{x_{k}}\right)
\left(
\sum_{i,j=1}^{n-1}a_{ij}v_{x_{i}x_{j}}+%
\sum_{i=1}^{n-1}a_{in}v_{x_{i}z}+a_{nn}v_{zz}\right) .  \label{3.18}
\end{equation}%
First,%
\begin{align}
-\lambda& p\left( z+b\right) a_{kn}v_{x_{k}}\left(
a_{ij}v_{x_{i}x_{j}}+a_{ji}v_{x_{j}x_{i}}\right) \nonumber
\\
&=-\lambda p\left( z+b\right) a_{kn}a_{ij}\left(
v_{x_{k}}v_{x_{i}x_{j}}+v_{x_{k}}v_{x_{j}x_{i}}\right)
\nonumber
\\
&=\left( -\lambda p\left( z+b\right) a_{kn}a_{ij}v_{x_{k}}v_{x_{j}}\right)
_{x_{i}}+\lambda p\left( z+b\right) a_{kn}a_{ij}v_{x_{k}x_{i}}v_{x_{j}}
\nonumber
\\
&\quad+\lambda p\left( z+b\right) \left( a_{kn}a_{ij}\right)
_{x_{i}}v_{x_{k}}v_{x_{j}}+\left( -\lambda p\left( z+b\right)
a_{kn}a_{ij}v_{x_{k}}v_{x_{i}}\right) _{x_{j}}
\nonumber
\\
&\hspace{3cm}
+\lambda p\left( z+b\right)
a_{kn}a_{ij}v_{x_{k}x_{j}}v_{x_{i}}
+\lambda p\left( z+b\right) \left( a_{kn}a_{ij}\right)
_{x_{j}}v_{x_{k}}v_{xi}.
\label{3.19}
\end{align}%
Next,
\begin{multline}
\lambda p\left( z+b\right) a_{kn}a_{ij}v_{x_{k}x_{i}}v_{x_{j}}+\lambda
p\left( z+b\right) a_{kn}a_{ij}v_{x_{k}x_{j}}v_{x_{i}}
\\
=\left( \lambda p\left( z+b\right) a_{kn}a_{ij}v_{x_{i}}v_{x_{j}}\right)
_{x_{k}}-\lambda p\left( z+b\right) \left( a_{kn}a_{ij}\right)
_{x_{k}}v_{x_{i}}v_{x_{j}}.  \label{8.1}
\end{multline}%
Hence, it follows from (\ref{3.19}) and (\ref{8.1}) that
\begin{equation}
\left( -2\lambda p\left( z+b\right) \sum_{k=1}^{n-1}a_{kn}v_{x_{k}}\right)
\left( \sum_{i,j=1}^{n-1}a_{ij}v_{x_{i}x_{j}}\right) \geq -C_{3}\lambda
p\left( \nabla v\right) ^{2}+\mbox{div}U_{8}.  \label{8.2}
\end{equation}%
Considering in (\ref{3.19}) and (\ref{8.1}) explicit forms of coordinates of
the vector function $U_{8}$ and using (\ref{3.1}), we conclude that $U_{7}$
satisfies condition (\ref{3.11}).

We now estimate the term
\begin{equation}
\left( -2\lambda p\left( z+b\right) \sum_{k=1}^{n-1}a_{kn}v_{x_{k}}\right)
\left( \sum_{i=1}^{n-1}a_{in}v_{x_{i}z}\right) .  \label{8.3}
\end{equation}%
We have%
\begin{equation*}
\left( -2\lambda p\left( z+b\right) \sum_{k=1}^{n-1}a_{kn}v_{x_{k}}\right)
\left( \sum_{i=1}^{n-1}a_{in}v_{x_{i}z}\right)
\\
=-\lambda p\left( z+b\right) \sum_{i,k=1}^{n-1}a_{kn}a_{in}\left(
v_{x_{k}}v_{x_{i}z}+v_{x_{i}}v_{x_{k}z}\right) .
\end{equation*}%
We have:%
\begin{equation*}
-\lambda p\left( z+b\right) a_{kn}a_{in}\left(
v_{x_{k}}v_{x_{i}z}+v_{x_{i}}v_{x_{k}z}\right)
=\left( -\lambda p\left( z+b\right) a_{kn}a_{in}v_{x_{i}}v_{x_{k}}\right)
_{z}+\lambda p\left( \left( z+b\right) a_{kn}a_{in}\right)
_{z}v_{x_{i}}v_{x_{k}}.
\end{equation*}%
Hence, the term (\ref{8.3}) can be estimated from the below as%
\begin{equation}
\left( -2\lambda p\left( z+b\right) \sum_{k=1}^{n-1}a_{kn}v_{x_{k}}\right)
\left( \sum_{i=1}^{n-1}a_{in}v_{x_{i}z}\right) \geq -C_{3}\lambda p\left(
\nabla v\right) ^{2}+\mbox{div}U_{9},  \label{8.4}
\end{equation}%
where $U_{9}$ satisfies (\ref{3.11}).

We now estimate
\begin{equation}
\left( -2\lambda p\left( z+b\right) \sum_{k=1}^{n-1}a_{kn}v_{x_{k}}\right)
a_{nn}v_{zz}.  \label{8.5}
\end{equation}%
We have%
\begin{align*}
-2\lambda p\left( z+b\right)& a_{kn}a_{nn}v_{x_{k}}v_{zz}
\\
& =\left( -2\lambda
p\left( z+b\right) a_{kn}a_{nn}v_{x_{k}}v_{z}\right) _{z}+2\lambda p\left(
z+b\right) a_{kn}a_{nn}v_{x_{k}z}v_{z} \\
& \hspace{1cm}+2\lambda p\left( \left( z+b\right) a_{kn}a_{nn}\right)
_{z}v_{x_{k}}v_{z} \\
& =\left( \lambda p\left( z+b\right) a_{kn}a_{nn}v_{z}^{2}\right)
_{x_{k}}-\lambda p\left( \left( z+b\right) a_{kn}a_{nn}\right)
_{x_{k}}v_{z}^{2} \\
& \hspace{1cm}+2\lambda p\left( \left( z+b\right) a_{kn}a_{nn}\right)
_{z}v_{x_{k}}v_{z}+\left( -2\lambda p\left( z+b\right)
a_{kn}a_{nn}v_{x_{k}}v_{z}\right) _{z}.
\end{align*}%
Hence, the expression in (\ref{8.5}) can be estimated as%
\begin{equation}
\left( -2\lambda p\left( z+b\right) \sum_{k=1}^{n-1}a_{kn}v_{x_{k}}\right)
a_{nn}v_{zz}\geq -C_{3}\lambda p\left( \nabla v\right) ^{2}+\mbox{div}U_{10},
\label{8.6}
\end{equation}%
where (\ref{3.11}) is valid for $U_{10}.$ Summing up (\ref{8.2}), (\ref{8.4}%
) and (\ref{8.6}), we obtain%
\begin{equation}
2y_{1}y_{4}\left( z+b\right) ^{2-p}\geq -C_{3}\lambda p\left( \nabla
v\right) ^{2}+\mbox{div}U_{11},  \label{8.7}
\end{equation}%
where $U_{11}$ satisfies (\ref{3.11}).

\vspace{4pt}\noindent
\textbf{Step 4}. Estimate $2y_{2}y_{4}\left( z+b\right) ^{2-p},$%
\begin{equation*}
2y_{2}y_{4}\left( z+b\right) ^{2-p}=-2\lambda ^{3}p^{3}\left( z+b\right)
^{2p-1}\sum_{i=1}^{n-1}a_{in}a_{nn}v_{x_{i}}v
\end{equation*}%
\begin{equation*}
=\left( \lambda ^{3}p^{3}\left( z+b\right)
^{2p-1}\sum_{i=1}^{n-1}a_{in}a_{nn}v^{2}\right) _{x_{i}}+\lambda
^{3}p^{3}\left( z+b\right) ^{2p-1}\left( \sum_{i=1}^{n-1}\left(
a_{in}a_{nn}\right) _{x_{i}}\right) v^{2}.
\end{equation*}%
Comparing this with (\ref{3.8}), (\ref{3.14}), (\ref{3.170}), (\ref{3.21})
and (\ref{8.7}), we obtain%
\begin{equation*}
\left( L_{0}u\right) ^{2}\exp \left[ 2\lambda \left( z+b\right) ^{p}\right]
\left( z+b\right) ^{2-p}\geq -C_{3}\lambda p\left( \nabla u\right) ^{2}\exp %
\left[ 2\lambda \left( z+b\right) ^{p}\right]
\end{equation*}%
\begin{equation}
+C_{3}\lambda ^{3}p^{4}\left( z+b\right) ^{2p-2}u^{2}\exp \left[ 2\lambda
\left( z+b\right) ^{p}\right] +\mbox{div}U_{12},\forall p\geq p_{0},
\label{8.8}
\end{equation}%
where $U_{12}$ satisfies (\ref{3.11}).

In addition to the term $\mbox{div}U_{12},$ the right hand side of (\ref{8.8}%
) has one negative and one positive term. But,except of divergence terms ($%
\mbox{div}$), one must have only positive terms in the right hand side of
any Carleman estimate. Therefore, we perform now Step 5.

\vspace{4pt}\noindent
\textbf{Step 5}. Estimate from the below $-\left( L_{0}u\right) u\exp \left[
2\lambda \left( z+b\right) ^{p}\right] .$ We have
\begin{align}
-\left( L_{0}u\right) & u\exp \left[ 2\lambda \left( z+b\right) ^{p}\right]
=-\sum_{i,j=1}^{n-1}a_{ij}u_{x_{i}x_{j}}u\exp \left[ 2\lambda \left(
z+b\right) ^{p}\right]  \notag \\
& -\sum_{i=1}^{n-1}a_{in}u_{x_{i}z}u\exp \left[ 2\lambda \left( z+b\right)
^{p}\right] -a_{nn}u_{zz}u\exp \left[ 2\lambda \left( z+b\right) ^{p}\right]
\notag \\
& =\sum_{i,j=1}^{n-1}\left( -a_{ij}u_{x_{j}}u\exp \left[ 2\lambda \left(
z+b\right) ^{p}\right] \right)
_{x_{i}}+\sum_{i,j=1}^{n-1}a_{ij}u_{x_{i}}u_{x_{j}}\exp \left[ 2\lambda
\left( z+b\right) ^{p}\right]  \notag \\
& \quad +\sum_{i,j=1}^{n-1}\left( a_{ij}\right) _{x_{i}}u_{x_{j}}u\exp \left[
2\lambda \left( z+b\right) ^{p}\right] +\sum_{i=1}^{n-1}\left(
-a_{in}u_{z}u\exp \left[ 2\lambda \left( z+b\right) ^{p}\right] \right)
_{x_{i}}  \notag \\
& \quad +\sum_{i=1}^{n-1}\left( a_{in}\right) _{x_{i}}u_{z}u\exp \left[
2\lambda \left( z+b\right) ^{p}\right] +\sum_{i=1}^{n-1}a_{in}u_{z}u_{x_{i}}%
\exp \left[ 2\lambda \left( z+b\right) ^{p}\right]  \notag \\
& \quad +\left( -a_{nn}u_{z}u\exp \left[ 2\lambda \left( z+b\right) ^{p}%
\right] \right) _{z}+a_{nn}u_{z}^{2}\exp \left[ 2\lambda \left( z+b\right)
^{p}\right]  \notag \\
& \quad +2\lambda p\left( z+b\right) ^{p-1}a_{nn}u_{z}u\exp \left[ 2\lambda
\left( z+b\right) ^{p}\right] +\left( a_{nn}\right) _{z}u_{z}u\exp \left[
2\lambda \left( z+b\right) ^{p}\right] .  \label{3.23}
\end{align}%
Next,%
\begin{multline}
2\lambda p\left( z+b\right) ^{p-1}a_{nn}u_{z}u\exp \left[ 2\lambda \left(
z+b\right) ^{p}\right]
=\left( \lambda p\left( z+b\right)
^{p-1}a_{nn}u^{2}\exp \left[ 2\lambda \left( z+b\right) ^{p}\right] \right)
_{z}
\\
-2\lambda ^{2}p^{2}\left( z+b\right) ^{2p-2}a_{nn}u^{2}\left( 1+O\left(
1/\lambda \right) \right) \exp \left[ 2\lambda \left( z+b\right) ^{p}\right]
.  \label{3.24}
\end{multline}%
Combining (\ref{3.23}) with (\ref{3.24}) and taking into account (\ref{1.3})
as well as inequalities like $u_{x_{i}}u\geq -u_{x_{i}}^{2}/\left( 2\lambda
\right) -\lambda u^{2}/2$, we obtain for $\lambda \geq \lambda _{0}$%
\begin{multline}
-\left( L_{0}u\right) u\exp \left[ 2\lambda \left( z+b\right) ^{p}\right]
\geq \frac{\mu _{1}}{2}\left( \nabla u\right) ^{2}\exp \left[ 2\lambda
\left( z+b\right) ^{p}\right]
\\
-3\lambda ^{2}p^{2}\left( z+b\right) ^{2p-2}a_{nn}u^{2}\exp \left[ 2\lambda
\left( z+b\right) ^{p}\right] +\mbox{div}U_{13},  \label{3.27}
\end{multline}%
see (\ref{3.11}) for $U_{13}.$

\vspace{4pt}\noindent
\textbf{Step 6}. This is the final step. Multiply estimate (\ref{3.27}) by $%
4C_{3}\lambda p/\mu _{1}$ and sum up with (\ref{8.8}). We obtain%
\begin{equation*}
-4C_{3}\lambda p\mu _{1}^{-1}\left( L_{0}u\right) u\exp \left[ 2\lambda
\left( z+b\right) ^{p}\right] +\left( L_{0}u\right) ^{2}\exp \left[ 2\lambda
\left( z+b\right) ^{p}\right] \left( z+b\right) ^{2-p}
\end{equation*}%
\begin{equation}
\geq C_{3}\lambda p\left( \nabla u\right) ^{2}\exp \left[ 2\lambda \left(
z+b\right) ^{p}\right]  \label{3.29}
\end{equation}%
\begin{equation*}
+C_{3}\lambda ^{3}p^{4}\left( z+b\right) ^{2p-2}\left( 1-\frac{12a_{nn}}{%
p\mu _{1}}\right) u^{2}\exp \left[ 2\lambda \left( z+b\right) ^{p}\right] +%
\mbox{div}U_{14},
\end{equation*}%
see (\ref{3.11}) for $U_{14}.$ We can choose $p_{0}$ so large that, in
addition to (\ref{3.170}),
\begin{equation}
1-\frac{12a_{nn}\left( \mathbf{x}\right) }{p\mu _{1}}\geq \frac{1}{2}%
,\forall p\geq p_{0}.  \label{3.31}
\end{equation}%
We estimate the left hand side of (\ref{3.29}) from the above as%
\begin{multline*}
-4C_{3}\lambda p\mu _{1}^{-1}\left( L_{0}u\right) u\exp \left[ 2\lambda
\left( z+b\right) ^{p}\right] +\left( L_{0}u\right) ^{2}\exp \left[ 2\lambda
\left( z+b\right) ^{p}\right] \left( z+b\right) ^{2-p} \\
\leq C_{2}\left( L_{0}u\right) ^{2}\exp \left[ 2\lambda \left( z+b\right)
^{p}\right] +C_{2}\lambda ^{2}u^{2}\exp \left[ 2\lambda \left( z+b\right)
^{p}\right] .
\end{multline*}%
Combining this with the right hand side of (\ref{3.29}), integrating the
obtained pointwise inequality over the domain $\Omega $ and taking into
account (\ref{3.11}), (\ref{3.31}) and Gauss' formula, we obtain the target
estimate (\ref{3.2}).
\end{proof}


\begin{corollary}
Assume that conditions of Theorem \ref{Thm Carleman} are satisfied. Since we
should have in Theorem \ref{Thm Carleman} $b>R$, we choose in (\ref{3.2}) $%
b=3R$. Let $p_{0}>1$ and $\lambda _{0}>1$ be the numbers of Theorem \ref{Thm
Carleman}. Consider the $N-$D complex valued vector functions $W\left(
\mathbf{x}\right) \in H_{0,\#}^{2}\left( \Omega \right) .$ Then there exists
a sufficiently large number $\lambda _{1},$
\begin{equation}
\lambda _{1}=\lambda _{1}(\mu _{1},\mu _{2},n,R,\max_{ij}\left\Vert
a_{ij}\right\Vert _{C^{1}\left( \overline{\Omega }\right)
},\max_{j}\left\Vert b_{j}\right\Vert _{C\left( \overline{\Omega }\right)
},
\left\Vert c\right\Vert _{C\left( \overline{\Omega }\right) }\left\Vert
\mathbf{n}\right\Vert _{C\left( \overline{\Omega }\right) },\underline{k},%
\overline{k},N)
\geq \lambda _{0}  \label{300}
\end{equation}%
depending only on listed parameters such that the following Carleman
estimate holds
\begin{multline*}
\int_{\Omega }\left\vert L\left( W\left( \mathbf{x}\right) \right)
+D_{N}^{-1}S_{N}\mathbf{n}^{2}(\mathbf{x})W(\mathbf{x})\right\vert ^{2}\exp %
\left[ 2\lambda \left( z+3R\right) ^{p_{0}}\right] d\mathbf{x}
\\
\mathbf{\geq }C_{3}\lambda \int_{\Omega }\left( \left\vert \nabla
W\right\vert ^{2}+\lambda ^{2}\left\vert W\right\vert ^{2}\right) \exp \left[
2\lambda \left( z+3R\right) ^{p_{0}}\right] d\mathbf{x},
\end{multline*}%
for all $\lambda \geq \lambda_1,$ $W\in H_{0,\#}^{2}\left( \Omega \right).$
\label{cor 4.1}
\end{corollary}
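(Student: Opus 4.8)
The plan is to derive the estimate for the full operator $\mathcal{M}W := L(W) + D_N^{-1}S_N\mathbf{n}^2(\mathbf{x})W$ directly from the Carleman estimate (\ref{3.2}) for the principal part $L_0$ in Theorem \ref{Thm Carleman}, treating the lower-order terms of $L$ together with the coupling term $D_N^{-1}S_N\mathbf{n}^2 W$ as a first-order perturbation that gets absorbed once $\lambda$ is large. Throughout I abbreviate the weight by $w_\lambda(\mathbf{x})=\exp\left[2\lambda(z+3R)^{p_0}\right]$.

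First I would reduce to the scalar real-valued setting of Theorem \ref{Thm Carleman}. Fixing $b=3R$ and $p=p_0$, and writing each complex component as $W_m=\operatorname{Re}W_m+\mathrm{i}\operatorname{Im}W_m$, I apply (\ref{3.2}) to each of the $2N$ real functions $\operatorname{Re}W_m,\operatorname{Im}W_m\in H^2_{0,\#}(\Omega)$ and sum over $m=1,\dots,N$. With $L_0$ acting componentwise and $|\cdot|$ the Euclidean norm on $\mathbb{C}^N$ (resp.\ on $\mathbb{C}^{N\times n}$ for $\nabla W$), this yields, for all $\lambda\geq\lambda_0$,
\[
\int_{\Omega}|L_0 W|^2\,w_\lambda\,d\mathbf{x}\geq C_2\lambda\int_{\Omega}\left(|\nabla W|^2+\lambda^2|W|^2\right)w_\lambda\,d\mathbf{x}.
\]
Next I separate the principal part. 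By (\ref{100}) and (\ref{101}) one has $L(W)=L_0W+\sum_{i=1}^n b_iW_{x_i}+cW$, so
\[
L_0 W=\mathcal{M}W-\left(\sum_{i=1}^{n}b_iW_{x_i}+\left(c\,\mathrm{Id}+D_N^{-1}S_N\mathbf{n}^2\right)W\right).
\]
The bracketed remainder $\mathcal{R}W$ is a first-order operator with bounded coefficients: by (\ref{1.3}), (\ref{1.6}), the boundedness of $b_j,c,\mathbf{n}$ on $\overline{\Omega}$ and the fixed invertible matrix $D_N^{-1}S_N$ (Proposition \ref{prop MK}), there is a constant $C$, depending only on $\max_j\|b_j\|_{C(\overline{\Omega})}$, $\|c\|_{C(\overline{\Omega})}$, $\|\mathbf{n}\|_{C(\overline{\Omega})}$ and $N$, with $|\mathcal{R}W|^2\leq C\left(|\nabla W|^2+|W|^2\right)$ pointwise. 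The elementary inequality $|L_0W|^2\leq 2|\mathcal{M}W|^2+2|\mathcal{R}W|^2$ then gives
\[
\int_{\Omega}|L_0 W|^2\,w_\lambda\,d\mathbf{x}\leq 2\int_{\Omega}|\mathcal{M}W|^2\,w_\lambda\,d\mathbf{x}+2C\int_{\Omega}\left(|\nabla W|^2+|W|^2\right)w_\lambda\,d\mathbf{x}.
\]

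Finally I would combine the two displays and absorb the perturbation. Substituting the second into the first yields
\[
2\int_{\Omega}|\mathcal{M}W|^2\,w_\lambda\,d\mathbf{x}\geq(C_2\lambda-2C)\int_{\Omega}|\nabla W|^2\,w_\lambda\,d\mathbf{x}+(C_2\lambda^3-2C)\int_{\Omega}|W|^2\,w_\lambda\,d\mathbf{x}.
\]
Choosing $\lambda_1\geq\lambda_0$ so large (depending on $C_2$ and $C$, hence on exactly the parameters listed in (\ref{300})) that $C_2\lambda-2C\geq\tfrac12 C_2\lambda$ and $C_2\lambda^3-2C\geq\tfrac12 C_2\lambda^3$ for all $\lambda\geq\lambda_1$, the perturbation is swallowed and the claimed estimate follows with $C_3=C_2/4$. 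The key mechanism, and the only genuine subtlety, is that Theorem \ref{Thm Carleman} produces a gain of a full power of $\lambda$ on the gradient term and $\lambda^3$ on the zeroth-order term, whereas the first-order remainder contributes only $\lambda$-independent constants; this gap is precisely what permits the lower-order and coupling terms to be absorbed. I therefore expect no real obstacle beyond bookkeeping: the main care is to check that the new threshold $\lambda_1$ depends only on the quantities in (\ref{300}), which holds since $C_2$ is furnished by Theorem \ref{Thm Carleman} and $C$ is controlled by the stated norms and by $N$.
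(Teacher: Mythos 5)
Your proposal is correct and follows essentially the same route as the paper: the paper's entire proof consists of invoking the well-known fact (lemma 2.1 in \cite{Klibanov:jiipp2013}) that a Carleman estimate depends only on the principal part $L_{0}$, the lower-order and coupling terms being absorbable for large $\lambda$, and your argument is precisely that absorption carried out explicitly --- splitting $W$ into real and imaginary parts, writing $L_{0}W=\mathcal{M}W-\mathcal{R}W$ with $|\mathcal{R}W|^{2}\leq C\left( |\nabla W|^{2}+|W|^{2}\right) $, and using the $\lambda$ and $\lambda ^{3}$ gains to swallow the remainder once $\lambda \geq \lambda _{1}$. There is no gap, and your bookkeeping of what $\lambda _{1}$ and $C_{3}$ depend on matches (\ref{300}).
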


This Corollary follows immediately from Theorem \ref{Thm Carleman} as well
as from the well known fact (see, e.g. lemma 2.1 in \cite{Klibanov:jiipp2013}%
) that the Carleman estimate depends only on the principal part of a PDE
operator while the lower order terms of this operator can be absorbed in
this estimate.

\section{Convergence analysis}

\label{sec 5}

While Theorem \ref{thm min} ensures the existence and uniqueness of the
solution of the Minimization Problem (Section 3), it does not claim
convergence of minimizers, i.e. regularized solutions, to the exact solution
as noise in the data tends to zero. At the same time such a convergence
result is obviously important. However, this theorem is much harder to prove
than Theorem \ref{thm min}. Indeed, while only the variational principle and
Riesz theorem are used in the proof of Theorem \ref{thm min}, a different
apparatus is required in the convergence analysis. This apparatus is based
on the Carleman estimate of Theorem \ref{Thm Carleman}. In Section \ref{sec
5.1}, we establish the convergence rate of minimizers.

\subsection{Convergence rate}

\label{sec 5.1}

Following one of the main principles of the regularization theory \cite{Tihkonov:kapg1995},
we assume now that vector functions $\widetilde{F}(\mathbf{x})$ and $%
\widetilde{G}(\mathbf{x})$ in (\ref{2.17}) and (\ref{2.18}) are given with a
noise. More precisely, let $\Phi \left( \mathbf{x}\right) \in H^{2}\left(
\Omega \right) $ be the function defined in (\ref{2.20}). We assume that
this is given with a noise of the level $\delta \in \left( 0,1\right) ,$ i.e.%
\begin{equation}
\left\Vert \Phi ^{\ast }-\Phi \right\Vert _{H^{2}\left( \Omega \right) }\leq
\delta ,  \label{3.32}
\end{equation}%
where the vector function $\Phi ^{\ast }\in H^{2}\left( \Omega \right) $
corresponds to the noiseless data. In the case of noiseless data, we assume
the existence of the solution $V^{\ast }\in H^{2}\left( \Omega \right) $ of
the following analog of the problem (\ref{2.16})-(\ref{2.18}):
\begin{equation}
L\left( V^{\ast }\left( \mathbf{x}\right) \right) +D_{N}^{-1}S_{N}\mathbf{n}%
^{2}(\mathbf{x})V^{\ast }(\mathbf{x})=0,\text{ \ }\mathbf{x}\in \Omega ,
\label{3.33}
\end{equation}%
\begin{equation}
V^{\ast }\left( \mathbf{x}\right) =\widetilde{F}^{\ast }(\mathbf{x}),\text{ }%
\mathbf{x}\in \partial \Omega ,  \label{3.34}
\end{equation}%
\begin{equation}
\partial _{\nu }V^{\ast }\left( \mathbf{x}\right) =\widetilde{G}^{\ast }(%
\mathbf{x}),\text{ }\mathbf{x}\in \Gamma _{+}.  \label{3.35}
\end{equation}%
Similarly to (\ref{2.20}), we assume the existence of the vector valued
function function $\Phi ^{\ast }$ such that
\begin{equation}
\Phi ^{\ast }\in H^{2}\left( \Omega \right) ,\Phi ^{\ast }\left( \mathbf{x}%
\right) \mid _{\partial \Omega }=\widetilde{F}^{\ast }(\mathbf{x}),\text{ }%
\partial _{\nu }\Phi ^{\ast }\left( \mathbf{x}\right) \mid _{\Gamma _{+}}=%
\widetilde{G}^{\ast }(\mathbf{x}).  \label{3.36}
\end{equation}%
Similarly to (\ref{2.200}), let
\begin{equation}
W^{\ast }=V^{\ast }-\Phi ^{\ast }.  \label{3.40}
\end{equation}%
Then (\ref{2.22}), (\ref{3.36}) and (\ref{3.40}) imply that $W^{\ast }\in
H_{0}^{2}\left( \Omega \right) .$ Also, using (\ref{3.33})-(\ref{3.36}), we
obtain%
\begin{equation}
L\left( W^{\ast }\left( \mathbf{x}\right) \right) +D_{N}^{-1}S_{N}\mathbf{n}%
^{2}(\mathbf{x})\left( W^{\ast }\left( \mathbf{x}\right) \right) =-L\left(
\Phi ^{\ast }\left( \mathbf{x}\right) \right) -D_{N}^{-1}S_{N}\mathbf{n}^{2}(%
\mathbf{x})\left( \Phi ^{\ast }\left( \mathbf{x}\right) \right) ,
\label{3.41}
\end{equation}
for all $\mathbf{x}\in \Omega.
$
\begin{theorem}[The convergence rate]
Assume that conditions of Theorem \ref{thm min} as well as conditions (\ref%
{3.32})-(\ref{3.40}) hold. Let $\lambda _{1}$ be the number of Corollary \ref%
{cor 4.1}. Define the number $\eta $ as
\begin{equation}
\eta =2\left( 4R\right) ^{p_{0}}.  \label{3.37}
\end{equation}
Assume that the number $\delta _{0}\in \left( 0,1\right) $ is so small that $%
\ln \delta _{0}^{-1/\eta }>\lambda _{1}$. Let $\delta \in \left( 0,\delta
_{0}\right).$ Set $\epsilon =\epsilon \left( \delta \right) =\delta ^{2}.$
Let $V_{\min ,\epsilon \left( \delta \right) }\in H^{2}\left( \Omega \right)
$ be the unique minimizer of the functional (\ref{2.19}) which is found in
Theorem \ref{thm min}. Then the following convergence rate of regularized
solutions holds%
\begin{equation}
\left\Vert V_{\min ,\epsilon \left( \delta \right) }-V^{\ast }\right\Vert
_{H^{1}\left( \Omega \right) }\leq C_{4}\left( 1+\left\Vert W^{\ast
}\right\Vert _{H^{2}\left( \Omega \right) }\right) \sqrt{\delta },
\label{400}
\end{equation}%
where the $C_{4}>0$ depends on the same parameters as those listed in (\ref%
{300}).
\end{theorem}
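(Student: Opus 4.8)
The plan is to reduce the whole estimate to the single vector function
\[
Z \;=\; W_{\min,\epsilon}-W^{*}\;=\;\bigl(V_{\min,\epsilon}-V^{*}\bigr)-\bigl(\Phi-\Phi^{*}\bigr),
\]
which by \eqref{2.22}, \eqref{3.36} and \eqref{3.40} lies in $H^{2}_{0,\#}(\Omega)$ and is therefore an admissible argument for the Carleman estimate of Corollary \ref{cor 4.1}. Writing $\theta=\Phi-\Phi^{*}$ (so that $\|\theta\|_{H^{2}(\Omega)}\le\delta$ by \eqref{3.32}) and abbreviating $\mathcal{A}V:=L(V)+D_{N}^{-1}S_{N}\mathbf{n}^{2}V$, we have $V_{\min,\epsilon}-V^{*}=Z+\theta$. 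Hence it suffices to bound $\|Z\|_{H^{1}(\Omega)}$ and then add $\|\theta\|_{H^{1}(\Omega)}\le\delta$ at the very end.

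First I would establish the residual bound $\|\mathcal{A}Z\|_{L^{2}(\Omega)}\le C\delta\bigl(1+\|W^{*}\|_{H^{2}(\Omega)}\bigr)$ with the calibrated choice $\epsilon=\delta^{2}$. The competitor $V^{*}+\theta$ is admissible in the Minimization Problem, since $\bigl(V^{*}+\theta\bigr)\mid_{\partial\Omega}=\widetilde{F}^{*}+(\widetilde{F}-\widetilde{F}^{*})=\widetilde{F}$ and likewise $\partial_{\nu}\bigl(V^{*}+\theta\bigr)\mid_{\Gamma_{+}}=\widetilde{G}$. Because $\mathcal{A}V^{*}=0$ by \eqref{3.33}, we get $\mathcal{A}(V^{*}+\theta)=\mathcal{A}\theta$, and as $\mathcal{A}$ is a second order operator with bounded coefficients, $\|\mathcal{A}\theta\|_{L^{2}(\Omega)}\le C\|\theta\|_{H^{2}(\Omega)}\le C\delta$. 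Minimality of $V_{\min,\epsilon}$ then yields
\[
J_{\epsilon}(V_{\min,\epsilon})\le J_{\epsilon}(V^{*}+\theta)=\|\mathcal{A}\theta\|_{L^{2}(\Omega)}^{2}+\epsilon\|V^{*}+\theta\|_{H^{2}(\Omega)}^{2}\le C\delta^{2}+\delta^{2}\bigl(\|V^{*}\|_{H^{2}(\Omega)}+\delta\bigr)^{2}.
\]
Writing $\|V^{*}\|_{H^{2}(\Omega)}\le\|W^{*}\|_{H^{2}(\Omega)}+\|\Phi^{*}\|_{H^{2}(\Omega)}$ and absorbing the fixed noiseless lifting $\Phi^{*}$ into the constant gives $J_{\epsilon}(V_{\min,\epsilon})\le C\delta^{2}(1+\|W^{*}\|_{H^{2}(\Omega)}^{2})$, whence $\|\mathcal{A}V_{\min,\epsilon}\|_{L^{2}(\Omega)}^{2}\le J_{\epsilon}(V_{\min,\epsilon})\le C\delta^{2}(1+\|W^{*}\|_{H^{2}(\Omega)}^{2})$. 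Since $\mathcal{A}V^{*}=0$ forces $\mathcal{A}Z=\mathcal{A}V_{\min,\epsilon}-\mathcal{A}\theta$, the triangle inequality delivers the claimed residual bound. (Equivalently, one may test the Euler--Lagrange identity established in the proof of Theorem \ref{thm min} with $P=Z$ and absorb the resulting $\epsilon\|Z\|_{H^{2}}^{2}$ term.)

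Next I would feed $Z$ into Corollary \ref{cor 4.1}. On $\Omega$ one has $0<2R\le z+3R\le 4R$, so the weight $\exp\bigl[2\lambda(z+3R)^{p_{0}}\bigr]$ is bounded above by $e^{\lambda\eta}$ with $\eta=2(4R)^{p_{0}}$ as in \eqref{3.37}, and below by $1$. Estimating the weight by $e^{\lambda\eta}$ on the right and by $1$ on the left, and using $\lambda^{2}\ge1$, the Carleman estimate collapses to
\[
C_{3}\lambda\,\|Z\|_{H^{1}(\Omega)}^{2}\;\le\;e^{\lambda\eta}\,\|\mathcal{A}Z\|_{L^{2}(\Omega)}^{2}\;\le\;Ce^{\lambda\eta}\delta^{2}\bigl(1+\|W^{*}\|_{H^{2}(\Omega)}^{2}\bigr),\qquad\lambda\ge\lambda_{1}.
\]
The decisive step is the balance of $\lambda$ against $\delta$: choosing $\lambda=\ln\delta^{-1/\eta}=\tfrac{1}{\eta}\ln(1/\delta)$, which exceeds $\lambda_{1}$ precisely because $\delta<\delta_{0}$ and $\ln\delta_{0}^{-1/\eta}>\lambda_{1}$, we obtain $e^{\lambda\eta}=\delta^{-1}$ and $1/\lambda\le1$, so that $\|Z\|_{H^{1}(\Omega)}^{2}\le C\delta(1+\|W^{*}\|_{H^{2}(\Omega)}^{2})$. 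Taking square roots and adding $\|\theta\|_{H^{1}(\Omega)}\le\delta\le\sqrt{\delta}$ yields \eqref{400}.

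The main obstacle is not any individual computation but the orchestration of three ingredients pulling in different directions. The Carleman estimate applies only to functions that vanish, together with the relevant normal derivative, on the boundary, which forces the use of $Z=W_{\min,\epsilon}-W^{*}$ rather than the raw error $V_{\min,\epsilon}-V^{*}$ and makes the bookkeeping of the noise lift $\theta$ unavoidable. The residual $\|\mathcal{A}Z\|_{L^{2}(\Omega)}$ can be driven to $O(\delta)$ only through the minimality of $V_{\min,\epsilon}$ combined with the calibrated scaling $\epsilon=\delta^{2}$; a different power of $\delta$ would spoil the rate. Finally, the Carleman weight injects the factor $e^{\lambda\eta}$, and turning the weighted inequality into a genuine convergence rate requires letting $\lambda$ grow like $\ln(1/\delta)$ as $\delta\to0$, a choice legitimate only in the regime $\lambda\ge\lambda_{1}$, which is exactly what the smallness hypothesis $\ln\delta_{0}^{-1/\eta}>\lambda_{1}$ guarantees. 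It is this coupling of the regularization parameter, the Carleman parameter and the noise level that is delicate; the underlying elliptic and divergence-term estimates are routine once Corollary \ref{cor 4.1} is available.
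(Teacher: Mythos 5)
Your argument is correct in its essentials and reaches \eqref{400} by the same overall route as the paper: the same error function $Z=\widetilde{W}=W_{\min ,\epsilon \left( \delta \right) }-W^{\ast }\in H_{0,\#}^{2}\left( \Omega \right) $, the same application of Corollary \ref{cor 4.1} with the weight $\exp \left[ 2\lambda \left( z+3R\right) ^{p_{0}}\right] $ squeezed between $1$ and $e^{\lambda \eta }$, the same calibration $\epsilon =\delta ^{2}$ and $\lambda =\ln \delta ^{-1/\eta }$ (admissible since $\delta <\delta _{0}$ forces $\lambda >\lambda _{1}$), and the same concluding triangle inequality costing an extra $\delta \leq \sqrt{\delta }$. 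The one genuinely different step is the derivation of the $O(\delta )$ residual bound: the paper subtracts the Euler--Lagrange identity \eqref{3.42} for $W_{\min ,\epsilon \left( \delta \right) }$ from the weak identity \eqref{3.43} for $W^{\ast }$ and tests with $P=\widetilde{W}$, obtaining \eqref{3.44} in one stroke, whereas you invoke minimality against the admissible competitor $V^{\ast }+\theta $, $\theta =\Phi -\Phi ^{\ast }$. Your variant is more elementary -- it uses only the definition of a minimizer, not its variational characterization, and would survive even if uniqueness failed -- but it is lossier in the constant: the penalty term $\epsilon \left\Vert V^{\ast }+\theta \right\Vert _{H^{2}\left( \Omega \right) }^{2}$ injects $\left\Vert V^{\ast }\right\Vert _{H^{2}\left( \Omega \right) }\leq \left\Vert W^{\ast }\right\Vert _{H^{2}\left( \Omega \right) }+\left\Vert \Phi ^{\ast }\right\Vert _{H^{2}\left( \Omega \right) }$, and absorbing the ``fixed lifting'' $\Phi ^{\ast }$ into $C_{4}$ makes the constant depend on $\left\Vert \Phi ^{\ast }\right\Vert _{H^{2}\left( \Omega \right) }$, which is not among the parameters listed in \eqref{300} and is not controlled by $\left\Vert W^{\ast }\right\Vert _{H^{2}\left( \Omega \right) }$ alone. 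The paper's identity-subtraction is arranged precisely so that only $\epsilon \left[ W^{\ast },P\right] $ survives on the data side, which yields the dependence $\left( 1+\left\Vert W^{\ast }\right\Vert _{H^{2}\left( \Omega \right) }^{2}\right) $ claimed in the theorem (in fairness, a strict accounting of the signs in \eqref{3.42}--\eqref{3.43} leaves a term of the form $\epsilon \left[ \Phi +W^{\ast },P\right] $ there as well, so the printed constant-tracking is itself slightly optimistic). If you want $C_{4}$ to depend exactly on the parameters stated, replace the competitor step by the alternative you yourself mention parenthetically -- testing the Euler--Lagrange identity with $P=Z$ and absorbing $\epsilon \left\Vert Z\right\Vert _{H^{2}\left( \Omega \right) }^{2}$ -- after which everything downstream of the residual bound is identical to the paper's proof.
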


\noindent\emph{Proof}. We use in this proof the Carleman estimate of Corollary \ref%
{cor 4.1}. Similarly with (\ref{2.201}) let $V_{\min ,\epsilon \left( \delta
\right) }-\Phi =W_{\min ,\epsilon \left( \delta \right) }\in
H_{0,\#}^{2}\left( \Omega \right) $. We now rewrite (\ref{2.21}) as
\begin{multline}
\left( L\left( W_{\min ,\epsilon \left( \delta \right) }\left( \mathbf{x}%
\right) \right) +D_{N}^{-1}S_{N}\mathbf{n}^{2}(\mathbf{x})W_{\min ,\epsilon
\left( \delta \right) }(\mathbf{x}),L\left( P\left( \mathbf{x}\right)
\right) +D_{N}^{-1}S_{N}\mathbf{n}^{2}(\mathbf{x})P(\mathbf{x})\right)
+\epsilon \left( \delta \right) \left[ W_{\min ,\epsilon \left( \delta
\right) },P\right]
\\
=\left( L\left( \Phi \left( \mathbf{x}\right) \right) +D_{N}^{-1}S_{N}%
\mathbf{n}^{2}(\mathbf{x})\Phi (\mathbf{x}),L\left( P\left( \mathbf{x}%
\right) \right) +D_{N}^{-1}S_{N}\mathbf{n}^{2}(\mathbf{x})P(\mathbf{x}%
)\right)
+\epsilon \left( \delta \right) \left[ \Phi ,P\right],
\label{3.42}
\end{multline}%
for all $P\in H_{0,\#}^{2}(\Omega ).$ Also, we rewrite (\ref{3.41}) in an
equivalent form,
\begin{equation}
\left( L\left( W^{\ast }\left( \mathbf{x}\right) \right) +D_{N}^{-1}S_{N}%
\mathbf{n}^{2}(\mathbf{x})W^{\ast }(\mathbf{x}),L\left( P\left( \mathbf{x}%
\right) \right) +D_{N}^{-1}S_{N}\mathbf{n}^{2}(\mathbf{x})P(\mathbf{x}%
)\right) +\epsilon \left( \delta \right) \left[ W^{\ast },P\right]
\label{3.43}
\end{equation}%
\begin{equation*}
=\left( L\left( \Phi ^{\ast }\left( \mathbf{x}\right) \right)
+D_{N}^{-1}S_{N}\mathbf{n}^{2}(\mathbf{x})\Phi ^{\ast }(\mathbf{x}),L\left(
P\left( \mathbf{x}\right) \right) +D_{N}^{-1}S_{N}\mathbf{n}^{2}(\mathbf{x}%
)P(\mathbf{x})\right) +\epsilon \left( \delta \right) \left[ W^{\ast },P%
\right] ,
\end{equation*}%
for all $P\in H_{0,\#}^{2}(\Omega ).$ Denote
\begin{equation*}
\widetilde{W}=W_{\min ,\epsilon \left( \delta \right) }-W^{\ast }\in
H_{0,\#}^{2}\left( \Omega \right) ,\text{ }\widetilde{\Phi }=\Phi -\Phi
^{\ast }.
\end{equation*}%
Subtracting (\ref{3.43}) from (\ref{3.42}), we obtain%
\begin{multline*}
\left( \left( L\left( \widetilde{W}\left( \mathbf{x}\right) \right)
+D_{N}^{-1}S_{N}\mathbf{n}^{2}(\mathbf{x})\widetilde{W}(\mathbf{x}),L\Big(
P\left( \mathbf{x}\right) \right) +D_{N}^{-1}S_{N}\mathbf{n}^{2}(\mathbf{x}%
)P(\mathbf{x})\right)
+\epsilon \left( \delta \right) \left[ \widetilde{W},P%
\right] \Big)\\
=\left( L\left( \widetilde{\Phi }\left( \mathbf{x}\right) \right)
+D_{N}^{-1}S_{N}\mathbf{n}^{2}(\mathbf{x})\widetilde{\Phi }(\mathbf{x}%
),L\left( P\left( \mathbf{x}\right) \right) +D_{N}^{-1}S_{N}\mathbf{n}^{2}(%
\mathbf{x})P(\mathbf{x})\right)
+\epsilon \left( \delta \right) \left[
W^{\ast },P\right] ,
\end{multline*}%
for all $P\in H_{0,\#}^{2}(\Omega ).$ Setting here $P=\widetilde{W}$ and
using Cauchy-Schwarz inequality and (\ref{3.32}), we obtain%
\begin{equation}
\int_{\Omega }\left\vert L\left( \widetilde{W}\left( \mathbf{x}\right)
\right) +D_{N}^{-1}S_{N}\mathbf{n}^{2}(\mathbf{x})\widetilde{W}(\mathbf{x}%
)\right\vert ^{2}d\mathbf{x}\leq C_{4}\delta ^{2}\left( 1+\left\Vert W^{\ast
}\right\Vert _{H^{2}\left( \Omega \right) }^{2}\right) .  \label{3.44}
\end{equation}

We now want to apply Corollary \ref{cor 4.1}. We have
\begin{align*}
\int_{\Omega }& \left\vert L\left( \widetilde{W}\left( \mathbf{x}\right)
\right) +D_{N}^{-1}S_{N}\mathbf{n}^{2}(\mathbf{x})\widetilde{W}(\mathbf{x}%
)\right\vert ^{2}d\mathbf{x} \\
& \mathbf{=}\int_{\Omega }\left\vert L\left( \widetilde{W}\left( \mathbf{x}%
\right) \right) +D_{N}^{-1}S_{N}\mathbf{n}^{2}(\mathbf{x})\widetilde{W}(%
\mathbf{x})\right\vert ^{2}\exp \left( 2\lambda \left( z+3R\right)
^{p_{0}}\right)
\exp \left( -2\lambda \left( z+3R\right) ^{p_{0}}\right) d%
\mathbf{x} \\
& \geq \exp \left( -2\lambda \left( 4R\right) ^{p_{0}}\right) \int_{\Omega
}\left\vert L\left( \widetilde{W}\left( \mathbf{x}\right) \right)
+D_{N}^{-1}S_{N}\mathbf{n}^{2}(\mathbf{x})\widetilde{W}(\mathbf{x}%
)\right\vert ^{2}
\exp \left( 2\lambda \left( z+3R\right) ^{p_{0}}\right) d%
\mathbf{x.}
\end{align*}%
Substituting this into (\ref{3.44}), we obtain
\begin{multline}
\int_{\Omega }\left\vert L\left( \widetilde{W}\left( \mathbf{x}\right)
\right) +D_{N}^{-1}S_{N}\mathbf{n}^{2}(\mathbf{x})\widetilde{W}(\mathbf{x}%
)\right\vert ^{2}\exp \left( 2\lambda \left( z+3R\right) ^{p_{0}}\right) d%
\mathbf{x} \\
\mathbf{\leq }C_{4}\delta ^{2}\left( 1+\left\Vert W^{\ast }\right\Vert
_{H^{2}\left( \Omega \right) }^{2}\right) \exp \left( 2\lambda \left(
4R\right) ^{p_{0}}\right) .  \label{3.45}
\end{multline}%
By Corollary \ref{cor 4.1} the left hand side of inequality (\ref{3.45}) can
be estimated for any $\lambda \geq \lambda _{1}$ as%
\begin{align*}
\int_{\Omega }\big\vert L\left( \widetilde{W}\left( \mathbf{x}\right)
\right) & +D_{N}^{-1}S_{N}\mathbf{n}^{2}(\mathbf{x})\widetilde{W}(\mathbf{x})%
\big\vert^{2}\exp \left( 2\lambda \left( z+3R\right) ^{p_{0}}\right) d%
\mathbf{x} \\
& \mathbf{\geq }C_{3}\lambda \int_{\Omega }\left( \left\vert \nabla
\widetilde{W}\right\vert ^{2}+\lambda ^{2}\left\vert \widetilde{W}%
\right\vert ^{2}\right) \exp \left[ 2\lambda \left( z+3R\right) ^{p_{0}}%
\right] d\mathbf{x} \\
& \geq C_{4}\exp \left[ 2\lambda \left( 2R\right) ^{p_{0}}\right] \left\Vert
W\right\Vert _{H^{1}\left( \Omega \right) }^{2}.
\end{align*}%
Comparing this with (\ref{3.45}), we obtain%
\begin{equation}
\Vert \widetilde{W}\Vert _{H^{1}\left( \Omega \right) }^{2}\leq C_{4}\delta
^{2}\left( 1+\left\Vert W^{\ast }\right\Vert _{H^{2}\left( \Omega \right)
}^{2}\right) \exp \left( 2\lambda \left( 4R\right) ^{p_{0}}\right) .
\label{3.46}
\end{equation}%
Set $\epsilon =\delta ^{2}$. Next, choose $\lambda =\lambda \left( \delta
\right) $ such that $\exp \left( 2\lambda \left( 4R\right) ^{p_{0}}\right)
=1/\delta .$ Hence,
\begin{equation}
\lambda =\lambda \left( \delta \right) =\ln \delta ^{-1/\eta },  \label{3.47}
\end{equation}%
where the number $\eta $ is defined in (\ref{3.37}). This choice is possible
since $\delta \in \left( 0,\delta _{0}\right) $ and $\ln \delta
_{0}^{-1/\eta }>\lambda _{1},$ implying that $\lambda \left( \delta \right)
>\lambda _{1}.$ Thus, (\ref{3.46}) and (\ref{3.47}) imply that%
\begin{equation}
\Vert \widetilde{W}\Vert _{H^{1}\left( \Omega \right) }\leq C_{4}\left(
1+\left\Vert W^{\ast }\right\Vert _{H^{2}\left( \Omega \right) }\right)
\sqrt{\delta }.  \label{3.48}
\end{equation}%
Next, using triangle inequality, (\ref{3.48}) and (\ref{3.32}), we obtain%
\begin{equation*}
C_{4}\left( 1+\Vert W^{\ast }\Vert _{H^{2}\left( \Omega \right) }\right)
\sqrt{\delta }\geq \Vert \widetilde{W}\Vert _{H^{1}\left( \Omega \right)
}=\left\Vert \left( V_{\min ,\epsilon \left( \delta \right) }-V^{\ast
}\right) -\left( \Phi -\Phi ^{\ast }\right) \right\Vert _{H^{1}\left( \Omega
\right) }
\end{equation*}%
\begin{equation*}
\geq \left\Vert V_{\min ,\epsilon \left( \delta \right) }-V^{\ast
}\right\Vert _{H^{1}\left( \Omega \right) }-\left\Vert \Phi -\Phi ^{\ast
}\right\Vert _{H^{1}\left( \Omega \right) }\geq \left\Vert V_{\min ,\epsilon
\left( \delta \right) }-V^{\ast }\right\Vert _{H^{1}\left( \Omega \right)
}-\delta .
\end{equation*}%
Hence,%
\begin{equation}
\left\Vert V_{\min ,\epsilon \left( \delta \right) }-V^{\ast }\right\Vert
_{H^{1}\left( \Omega \right) }\leq \delta +C_{4}\left( 1+\left\Vert W^{\ast
}\right\Vert _{H^{2}\left( \Omega \right) }\right) \sqrt{\delta }\leq
C_{4}\left( 1+\left\Vert W^{\ast }\right\Vert _{H^{2}\left( \Omega \right)
}\right) \sqrt{\delta }.  \label{3.49}
\end{equation}%
Numbers $C_{4}$ in middle and right inequalities (\ref{3.49}) are different
and depend only on parameters listed in (\ref{300}). The target estimate (%
\ref{400}) of this theorem follows from (\ref{3.49}) immediately. \ $\hfill\square
$

\section{Numerical implementation}

\label{sec 6}

In this section, we test our method in the 2-D case. The domain $\Omega $ is
set to be the square
\begin{equation*}
\Omega =(-R,R)^{2}
\end{equation*}%
where $R=2.$ Let $M_{\mathbf{x}}=120$ and $h_{\mathbf{x}}=2R/M_{\mathbf{x}}$%
. We arrange a uniform grid of $(M_{\mathbf{x}}+1)\times (M_{\mathbf{x}}+1)$
points $\{\mathbf{x}_{ij}\}_{i,j=1}^{M_{\mathbf{x}}+1}\subset \overline{%
\Omega }$ where
\begin{equation}
\mathbf{x}_{ij}=(-R+(i-1)h_{\mathbf{x}},-R+(j-1)h_{\mathbf{x}}).
\label{partition x}
\end{equation}%
In this section, we set $\underline{k}=1.5$ and $\overline{k}=4.5.$ The
interval $[\underline{k},\overline{k}]$ is uniformly divided into $M_{k}=150$
sub-intervals whose end points are given by
\begin{equation}
k_{1}=\underline{k}<k_{2}<k_{3}<\dots <k_{M_{k}+1}=\overline{k}
\label{partition k}
\end{equation}%
where $k_{i}=k_{1}+(i-1)h_{k}$ and $h_{k}=(\overline{k}-\underline{k})/M_{k}$%
.

In all numerical tests of this section we computationally simulate the data
for the inverse problem via solving equation (\ref{1.41}) in the square $%
\Omega $ and with the boundary condition at $\partial \Omega $ generated by (%
\ref{1.42}), i.e.%
\begin{equation*}
\partial _{\nu }u\left( \mathbf{x},k\right) -iku\left( \mathbf{x},k\right) =0%
\text{ for }\mathbf{x}\in \partial \Omega .
\end{equation*}%
Hence, we do not specify in this section the operator $L$ and the function $%
\mathbf{n}^{2}(\mathbf{x})$ outside of $\Omega .$ For brevity, we consider
only the isotropic case, i.e. $L=\Delta $ for $\mathbf{x}\in \Omega $.
To show that our method is applicable for the case of non homogeneous media,
we choose the function $\mathbf{n}^{2}(\mathbf{x})$ in all numerical tests
below as:
\begin{equation*}
\mathbf{n}^{2}(\mathbf{x})=1+\frac{0.1\sin (3|\mathbf{x}|^{2})}{3|\mathbf{x}%
|^{2}+1}\quad \mbox{for all }\mathbf{x}\in \Omega .
\end{equation*}

We choose $N=10$ in (\ref{2.8}) by a trial and error procedure. If, for
example $N=5$, then our reconstructed functions $f\left( \mathbf{x}\right) $
are not satisfactory. Choosing $N>10$ does not help us to enhance the
accuracy of computed functions. We also refer here to Figure \ref{fig 1}.

\begin{remark}[The choice for the interval of wave numbers]
The length of each side of the square $\Omega $ is $2R=4$ units. We choose
the longest wavelength $\widetilde{\lambda }_{long}=2\pi /\underline{k}=2\pi
/1.5=4.19$ which is about $4$ units. The upper bound of the wave number $%
\overline{k}=4.5$ is set so that the shortest wavelength $\widetilde{\lambda
}_{short}=1.39$ is in the range that is compatible to the maximal $l_{\max }$
and minimal $l_{\min }$ sizes of the tested inclusions. More precisely, we
choose $\widetilde{\lambda }_{short}\in (0.7l_{\max },1.45l_{\min })$ and $%
\widetilde{\lambda }_{long}/\widetilde{\lambda }_{short}\approx 3.$
\end{remark}

\subsection{The forward problem}

\label{sec 6.1}

To generate the computationally simulated data (\ref{1.7}), (\ref{1.8}), we
need to solve numerically the forward problem (\ref{1.41}), (\ref{1.42}). To
avoid solving this problem in the entire space $\mathbb{R}^{2},$ we solve
the following boundary value problem:
\begin{equation}
\left\{
\begin{array}{rcll}
\Delta u(\mathbf{x},k)+k^{2}\mathbf{n}^{2}(\mathbf{x})u(\mathbf{x},k) & = &
g(k)f(\mathbf{x}) & \mathbf{x}\in \Omega , \\
\partial _{n}u(\mathbf{x},k)-\mathrm{i}ku(\mathbf{x},k) & = & 0 & \mathbf{x}%
\in \partial \Omega ,%
\end{array}%
\right.  \label{4.1}
\end{equation}%
assuming that it has unique solution $u(\mathbf{x},k)\in C^{2}\left(
\overline{\Omega }\right) $ for all $k\in \lbrack \underline{k},\overline{k}%
].$ We solve problem (\ref{4.1}) by the finite difference method. Having
computed the function $u(\mathbf{x},k)$, we extract the noisy data,
\begin{equation}
F(\mathbf{x},k)=u(\mathbf{x},k)(1+\delta (-1+2\mathrm{rand})+\mathrm{i}%
\delta (-1+2\mathrm{rand})),\text{ }\mathbf{x}\in \partial \Omega ,
\label{2}
\end{equation}%
\begin{equation}
G(\mathbf{x},k)=\partial _{z}u(\mathbf{x},k)(1+\delta (-1+2\mathrm{rand})+%
\mathrm{i}\delta (-1+2\mathrm{rand})),\text{ }\mathbf{x}\in \Gamma _{+},
\label{3}
\end{equation}%
see (\ref{1.7}), (\ref{1.8}). Here $\delta \in \left( 0,1\right) $ is the
noise level and \textquotedblleft $\mathrm{rand"}$ is the function built-in in MATLAB, taking
uniformly distributed random numbers in $[0,1].$
The function $-1+2\mathrm{rand}$ of MATLAB represents a uniformly distributed random numbers in $[-1,1].$
In this paper, we test our
method with the noise level $\delta =0.05,$ which means 5\% noise.

\begin{remark}
Recall that while in Problem \ref{ISP} we use both functions $F(\mathbf{x},k)
$ and $G(\mathbf{x},k)$ in (\ref{2}), (\ref{3}), in Problem \ref{ISP1} we
use only the Dirichlet boundary condition $F(\mathbf{x},k),$ see (\ref{1.7}%
)-(\ref{1.9}). However, it follows from boundary condition (\ref{4.1}) that
the Neumann boundary condition is $\partial _{\nu }u(\mathbf{x},k)\mid
_{\partial \Omega }=ikF(\mathbf{x},k)$. This explains why we computationally
observe the uniqueness of our numerical solution of Problem \ref{ISP1}.
\end{remark}

\subsection{The inverse problem}

\label{sec 6.2}

In this section we describe the numerical implementation of the minimization
procedure for the functional $J_{\epsilon }$. We use the following form of
the functionals $J_{\epsilon }$:
\begin{equation}
J_{\epsilon }(V)=\int_{\Omega }|D_{N}\Delta V+S_{N}\mathbf{n}^{2}\left(
\mathbf{x}\right) V|^{2}d\mathbf{x}+\epsilon \Vert V\Vert _{L^{2}(\Omega
)}^{2}.  \label{4.2}
\end{equation}%
This functional $J_{\epsilon }$ in (\ref{4.2}) is slightly different from
the one in (\ref{2.19}). First, we do not use here the matrix $D_{N}^{-1}.$
Indeed, this matrix is convenient to use for the above theoretical results.
However, it is inconvenient to use in computations since it contains large
numbers at $N=10$. Second, we replace the term $\Vert V\Vert _{H^{2}(\Omega
)}^{2}$ in (\ref{2.19}) by the term $\Vert V\Vert _{L^{2}(\Omega )}^{2}.$
This is because the $L^{2}(\Omega )-$norm is easier to work with
computationally than the $H^{2}(\Omega )-$norm. On the other hand, we have
not observed any instabilities probably because the number $121\times 121$
of grid points we use is not too large and all norms in finite dimensional
spaces are equivalent. The regularization parameter $\epsilon $ in our
computations was found by a trial and error procedure, $\epsilon =10^{-5}.$

We write derivatives involved in (\ref{4.2}) via finite differences. Next,
we minimize the resulting functional with respect to values of the vector
valued function
\begin{equation*}
V\left( \mathbf{x}\right) = (v_1(\mathbf{x}), v_2(\mathbf{x}), \dots, v_N(%
\mathbf{x}))^T
\end{equation*}
at grid points. The finite difference approximation of the functional $%
J_{\epsilon }(V)$ is
\begin{equation*}
\begin{array}{rl}
J_{\epsilon }(V) & =\displaystyle h_{\mathbf{x}}^{2}\sum_{i,j=2}^{M_\mathbf{x%
}}\sum_{m=1}^{N}\Big|\sum_{r=1}^{N}\Big\{\frac{d_{mr}}{h_{\mathbf{x}}^{2}}%
\big[v_r(x_{i-1},y_{j})+v_r(x_{i+1},y_{j})
+v_r(x_{i},y_{j-1})+v_r(x_{i},y_{j+1})-4v_r(x_{i},y_{j})\big]
\\
& \hspace{3mm}+\mathbf{n}^{2}(x_{i},y_{j})s_{mr}v_r(x_{i},y_{j})\Big\}\Big|%
^{2}+\displaystyle\epsilon h_{\mathbf{x}}^{2}\sum_{i,j=1}^{M_{\mathbf{x}%
}+1}\sum_{m=1}^{N}|v_m(x_{i},y_{j})|^{2},%
\end{array}%
\end{equation*}%
where $d_{mn}$ and $s_{mn}$ are elements of matrices $D_{N}$ and $S_{N}$ in (%
\ref{1}) and (\ref{2.101}) respectively. Introduce the \textquotedblleft
line up" version of the set $\{v_{n}(x_{i},y_{j}):1\leq i,j\leq M_{\mathbf{x}%
}+1,1\leq n\leq N\}$ as the $(M_{\mathbf{x}}+1)^{2}N$ dimensional vector $%
\mathcal{V}$ with
\begin{equation}
\mathcal{V}_{\mathfrak{m}}=v_{m}(x_{i},y_{j})\quad 1\leq i,j\leq M_{\mathbf{x%
}}+1,1\leq m\leq N,  \label{V and Vlineup}
\end{equation}%
where
\begin{equation}
\mathfrak{m}=(i-1)(M_{\mathbf{x}}+1)N+(j-1)N+m.  \label{line up index}
\end{equation}%
It is not hard to check that the map
\begin{equation*}
\{1,\dots, M_{\mathbf{x}}+1\}\times \{1, \dots, M_{\mathbf{x}}+1\}\times
\{1,\dots ,N\}\rightarrow \{1,\dots ,(M_{\mathbf{x}}+1)^{2}N\}
\end{equation*}%
that sends $(i,j,m)$ to $\mathfrak{m}$ as in \eqref{line up index} is onto
and one-to-one. The functional $J_{\epsilon }(V)$ is rewritten in terms of
the line up vector $\mathcal{V}$ as
\begin{equation*}
\mathcal{J}_{\epsilon }(\mathcal{V})=h_{\mathbf{x}}^{2}|\mathcal{L}\mathcal{V%
}|^{2}+\epsilon h_{\mathbf{x}}^{2}|\mathcal{V}|^{2}
\end{equation*}%
where $\mathcal{L}$ is the $(M_{\mathbf{x}}+1)^{2}N\times (M_{\mathbf{x}%
}+1)^{2}N$ matrix defined as follows. For each $\mathfrak{m}=(i-1)(M_{%
\mathbf{x}}+1)N+(j-1)N+m$, $2\leq i,j\leq M_{\mathbf{x}}$, $1\leq m\leq N$,

\begin{enumerate}
\item set $\mathcal{L}_{\mathfrak{m}\mathfrak{n}}=-\frac{4d_{mn}}{h_{\mathbf{%
x}}^{2}}+\mathbf{n}^{2}(x_{i},y_{j})b_{mn},$ if $\mathfrak{n}=(i-1)(M_{%
\mathbf{x}}+1)N+(j-1)N+n,1\leq n\leq N;$

\item set $\mathcal{L}_{\mathfrak{m}\mathfrak{n}}=\frac{1}{h_{\mathbf{x}}^{2}%
}$ if $\mathfrak{n}=(i\pm 1-1)(M_{\mathbf{x}}+1)N+(j-1)N+n$ or $\mathfrak{n}%
=(i-1)(M_{\mathbf{x}}+1)N+(j\pm 1-1)N+n$, $1\leq n\leq N.$
\end{enumerate}

It is obvious that the minimizer of $\mathcal{J}_{\epsilon }$ satisfies the
equation
\begin{equation}
(\mathcal{L}^*\mathcal{L}+\epsilon \mathrm{Id})\mathcal{V}=\vec{0}.
\label{eqn for min}
\end{equation}%
Here, $\vec{0}$ is the $(M_{\mathbf{x}}+1)^{2}N$ dimensional zero vector.

Next, we consider the \textquotedblleft line up" version of the first
condition in \eqref{2.14}. The following information is available
\begin{equation*}
\mathcal{V}_{\mathfrak{m}}=\tilde{F}_{N}(x_{i},y_{j},m),
\end{equation*}%
where $\mathfrak{m}$ is as in \eqref{line up index}. 
Hence, let $\mathcal{D}$ be the $(M_{\mathbf{x}}+1)^{2}N\times (M_{\mathbf{x}%
}+1)^{2}N$ diagonal matrix with such $\mathfrak{m}^{\mathrm{th}}$ diagonal
entries taking value $1$ while the others are $0$. This Dirichlet boundary
constraint of the vector $\mathcal{V} $ become
\begin{equation}
\mathcal{D}\mathcal{V}=\tilde{\mathcal{F}}.
\label{boundary constraints for V}
\end{equation}%
Here, the vector $\tilde{\mathcal{F}}$ is the \textquotedblleft line up"
vector of the data $F_{N}$ in the same manner when we defined $\mathcal{V}$,
see \eqref{line up index}.

We implement the constraint of $V$ in \eqref{2.15}. This constraint allows
us to collect the following information
\begin{equation}
\frac{\mathcal{V}_{\mathfrak{m}}-\mathcal{V}_{\mathfrak{m^{\prime }}}}{h_{%
\mathbf{x}}}=\tilde{G}_{N}(x_{i},y_{j},m)  \label{5.14}
\end{equation}%
where $\mathfrak{m}$ is as in \eqref{line up index} and
\begin{equation}
\mathfrak{m^{\prime }}=(i-1)(M_{\mathbf{x}}+1)N+(j-2)N+m  \label{mp}
\end{equation}%
for $1\leq i\leq M_{\mathbf{x}}+1$ and $j=M_{\mathbf{x}}+1.$ We rewrite %
\eqref{5.14} as
\begin{equation}
\mathcal{N}\mathcal{V}=\tilde{\mathcal{G}}  \label{Neumann}
\end{equation}%
where $\tilde{\mathcal{G}}$ is the \textquotedblleft line up" version of $%
\tilde{G}_{N}$ and the matrix $\mathcal{N}$ is defined as

\begin{enumerate}
\item $\mathcal{N}_{\mathfrak{m} \mathfrak{m}} = 1/h_\mathbf{x}$ and $%
\mathcal{N}_{\mathfrak{m} \mathfrak{m^{\prime }}} = -1/h_{\mathbf{x}}$ for $%
\mathfrak{m}$ and $\mathfrak{m^{\prime }}$ given by \eqref{line up index}
and \eqref{mp} respectively, $1 \leq i \leq M_{\mathbf{x}} + 1, j = M_{%
\mathbf{x}} + 1.$

\item Other entries of $\mathcal{N}$ are $0$.
\end{enumerate}

In practice, we compute $\mathcal{V}$ by solving
\begin{equation}
\left( \left[
\begin{array}{c}
\mathcal{L} \\
\mathcal{D} \\
\mathcal{N}%
\end{array}%
\right] ^{T}\left[
\begin{array}{c}
\mathcal{L} \\
\mathcal{D} \\
\mathcal{N}%
\end{array}%
\right] +\epsilon \mathrm{Id}\right) \mathcal{V}=\left[
\begin{array}{c}
\mathcal{L} \\
\mathcal{D} \\
\mathcal{N}%
\end{array}%
\right] ^{T}\left[
\begin{array}{c}
\vec{0} \\
\tilde{\mathcal{F}} \\
\tilde{\mathcal{G}}%
\end{array}%
\right]  \label{C}
\end{equation}%
in the case of Problem \ref{ISP} and we solve
\begin{equation}
\left( \left[
\begin{array}{c}
\mathcal{L} \\
\mathcal{D}%
\end{array}%
\right] ^{T}\left[
\begin{array}{c}
\mathcal{L} \\
\mathcal{D}%
\end{array}%
\right] +\epsilon \mathrm{Id}\right) \mathcal{V}=\left[
\begin{array}{c}
\mathcal{L} \\
\mathcal{D}%
\end{array}%
\right] ^{T}\left[
\begin{array}{c}
\vec{0} \\
\tilde{\mathcal{F}}%
\end{array}%
\right]  \label{D}
\end{equation}%
for Problem \ref{ISP1}. Having the vector $\mathcal{V}$, we can compute the
vector $V_{N}$ via \eqref{V and Vlineup}. Then, we follow Steps \ref{step
vcomp} and \ref{step fcomp} of Algorithms \ref{alg 1} and Algorithms \ref%
{alg 2} to compute the functions $v_{\mathrm{comp}}$ via \eqref{v appr} and
then ${f}^{\mathrm{comp}}$ by taking the real part of \eqref{2.4} when $k=1.5
$.

\begin{remark}[Remark on Problem \ref{ISP1}]
	We use \eqref{D} only for the convenience, since we do not want to have a significant extra programming effort, given that we have the computer code for solving \eqref{C}.
\end{remark}

\subsection{Tests}

\label{sec 6.3}

In the cases of Test 1 and Test 2, we apply below our method for Problem \ref%
{ISP}. And in the cases of Tests 3-5 we apply our method for Problem \ref%
{ISP1}. Whenever we say below about the accuracy of values of positive and
negative parts of inclusions, we compare maximal positive values and minimal
negative values of computed ones with true ones. Postprocessing was not
applied in all tests presented below.

\begin{enumerate}
\item \textit{Test 1. Problem \ref{ISP}. Two inclusions with different
shapes.} The function $f_{\mathrm{true}}$ is given by
\begin{equation*}
f_{\mathrm{true}}=\left\{
\begin{array}{rl}
2.5 & \mbox{if }\max \{0.6|x-0.75|,|y|\}<1.1, \\
-2 & \mbox{if }(x+0.75)^{2}+y^{2}<0.55^{2}, \\
0 & \mbox{otherwise,}%
\end{array}%
\right.
\end{equation*}%
and $g_{\mathrm{true}}(k)=\mathrm{i}k$ for $k\in \lbrack \underline{k},%
\overline{k}].$ We test the reconstructions of the locations, shapes and
positive/negative values of the function $f$ for two different inclusions.
One of them is a rectangle and the other one is a disk. In this case, the
function $f_{\mathrm{true}}$ attains both positive and negative values. The
numerical solution for this case is displayed on Figure \ref{fig model 1}.

\begin{figure}[h]
\begin{center}
\subfloat[]{\includegraphics[width =
.3\textwidth]{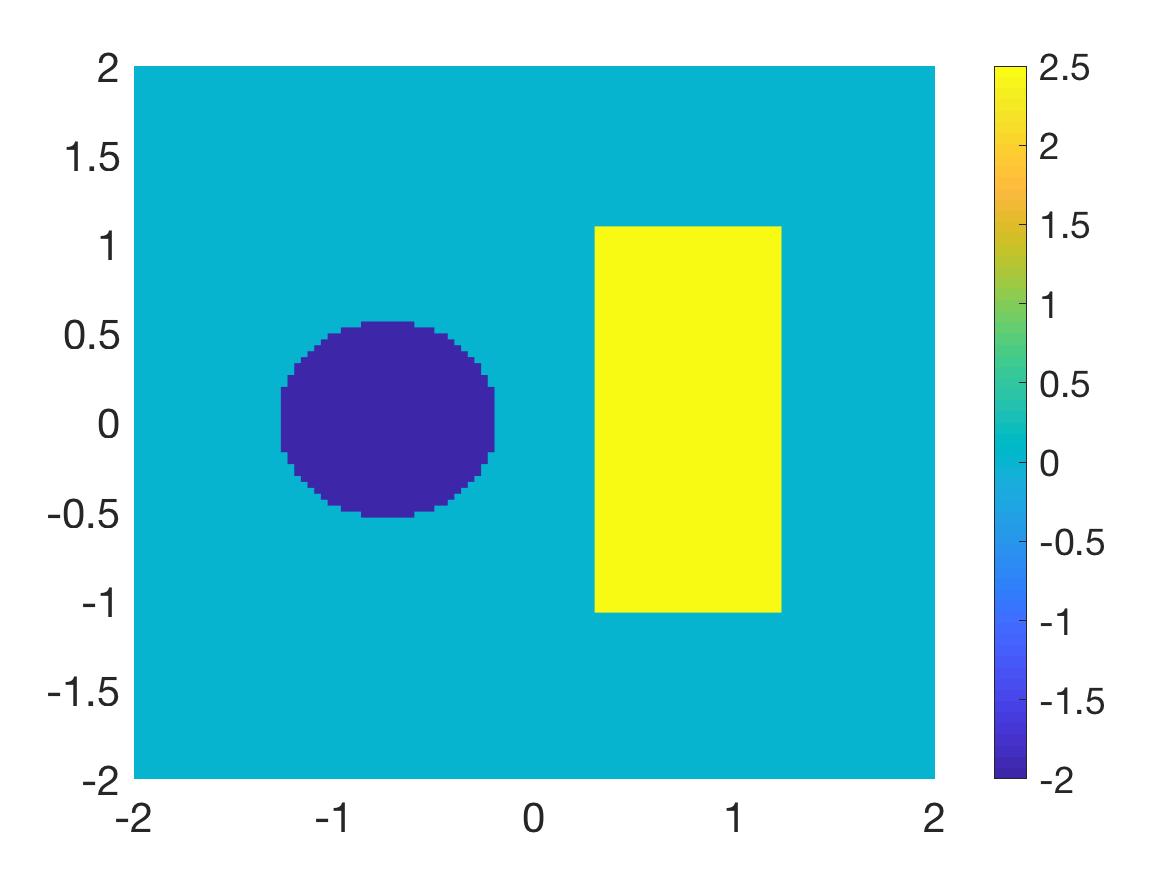}}\quad
\subfloat[]{\includegraphics[width =
.3\textwidth]{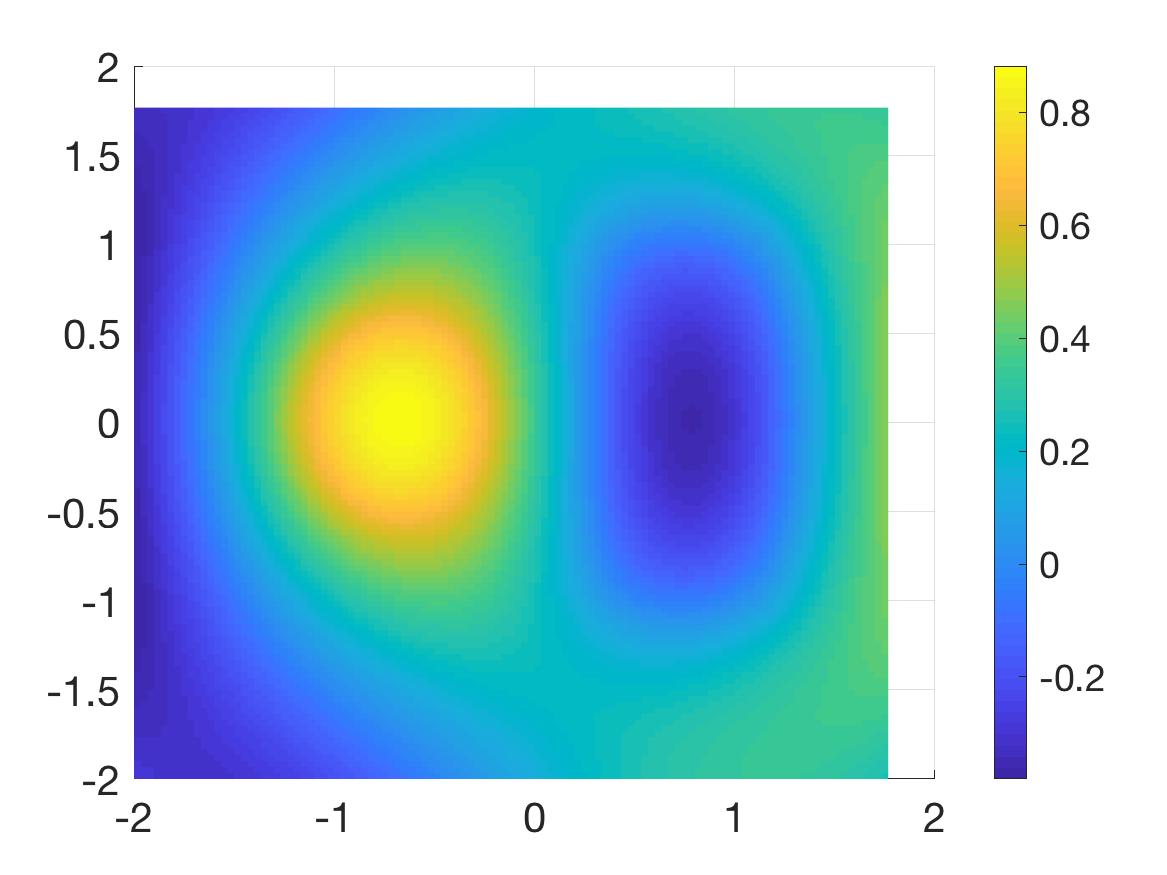}} \quad
\subfloat[]{\includegraphics[width =
.3\textwidth]{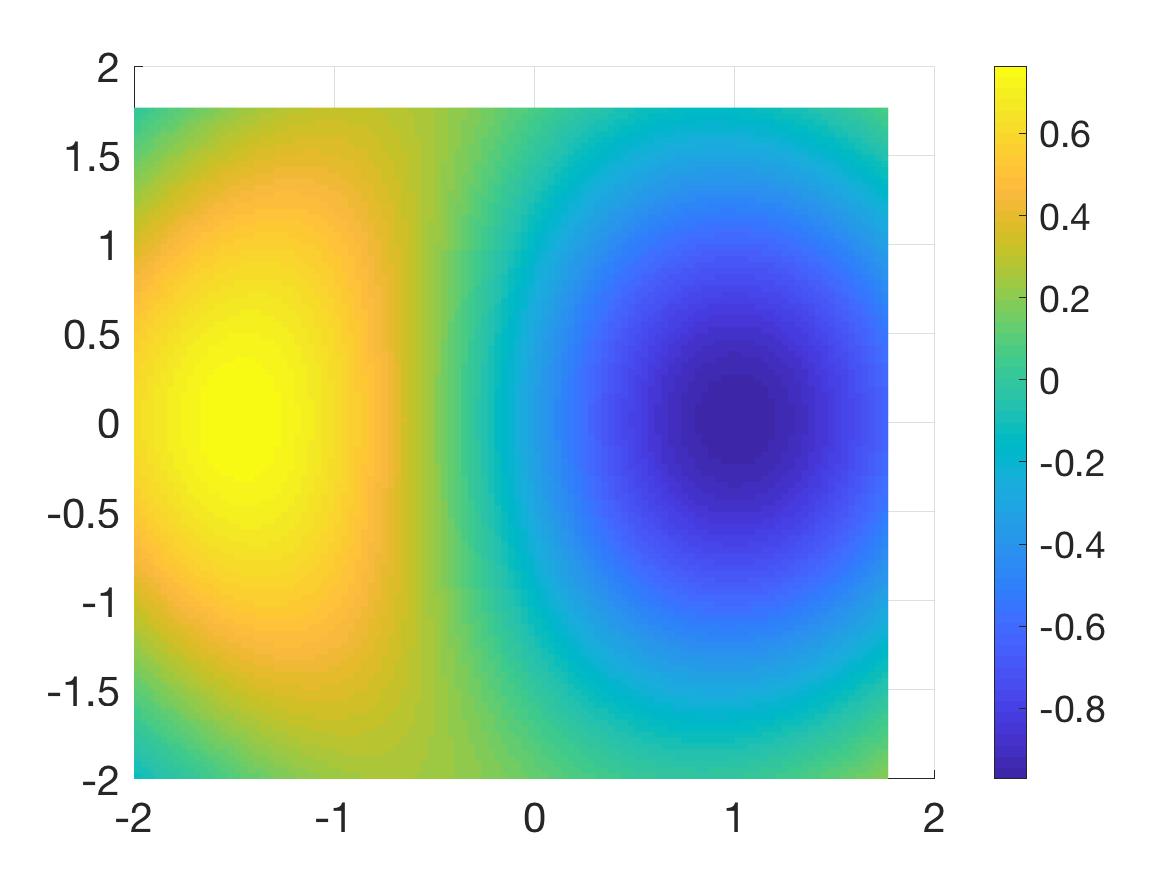}}

\subfloat[]{\includegraphics[width =
.3\textwidth]{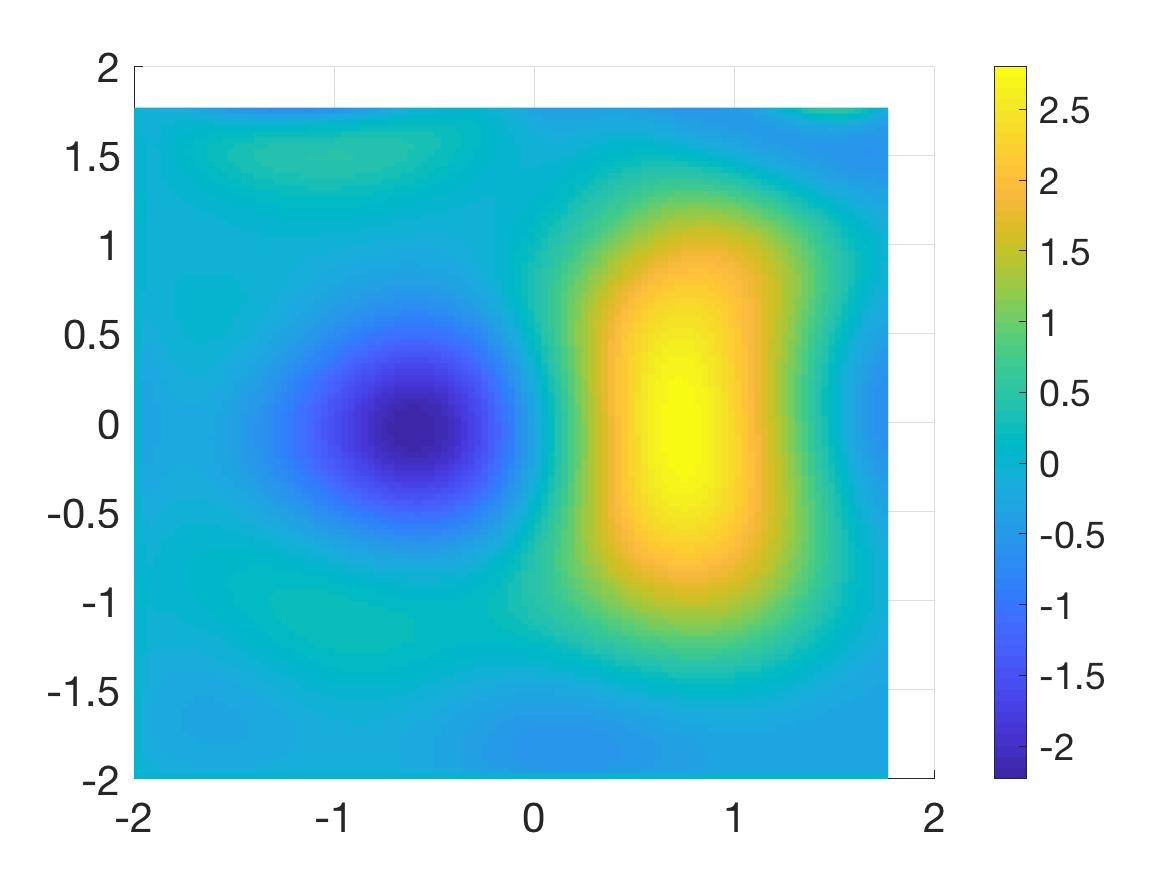}}\quad
\subfloat[]{\includegraphics[width = .3\textwidth]{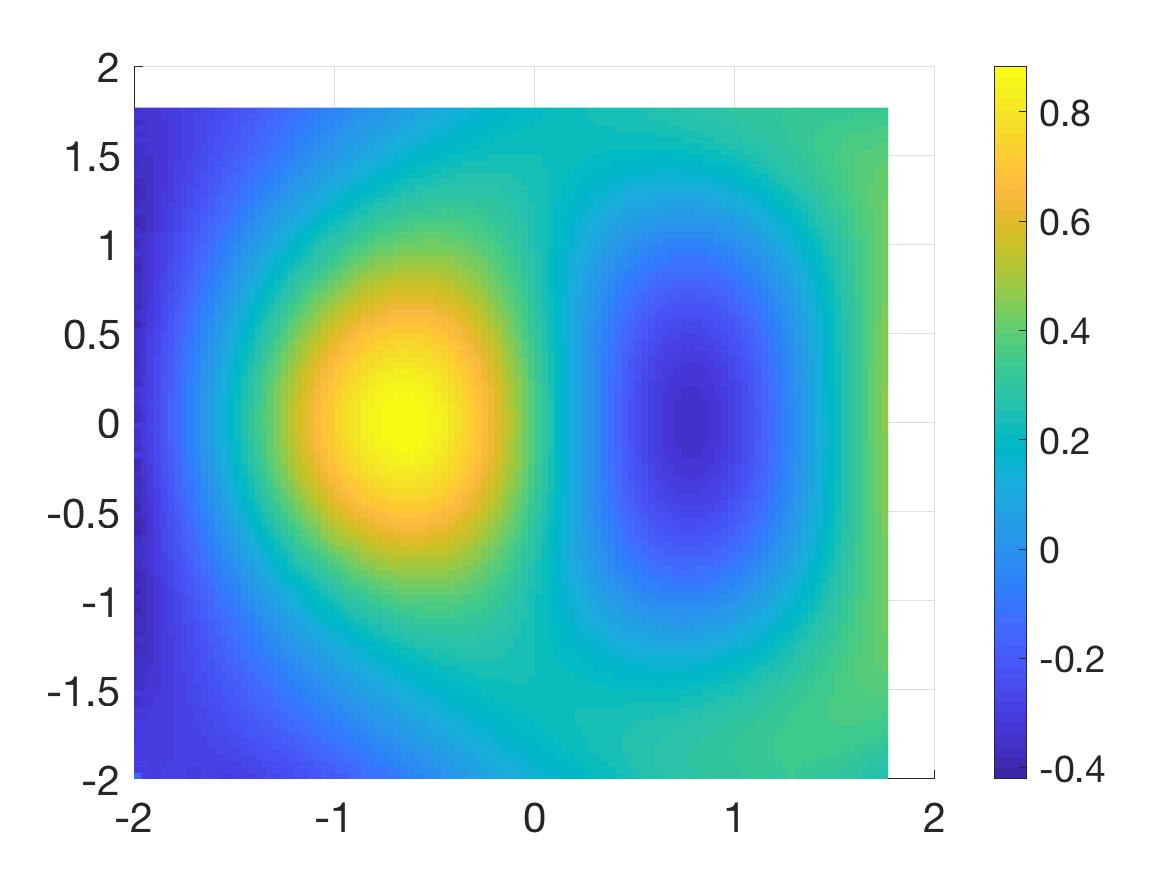}} \quad
\subfloat[]{\includegraphics[width = .3\textwidth]{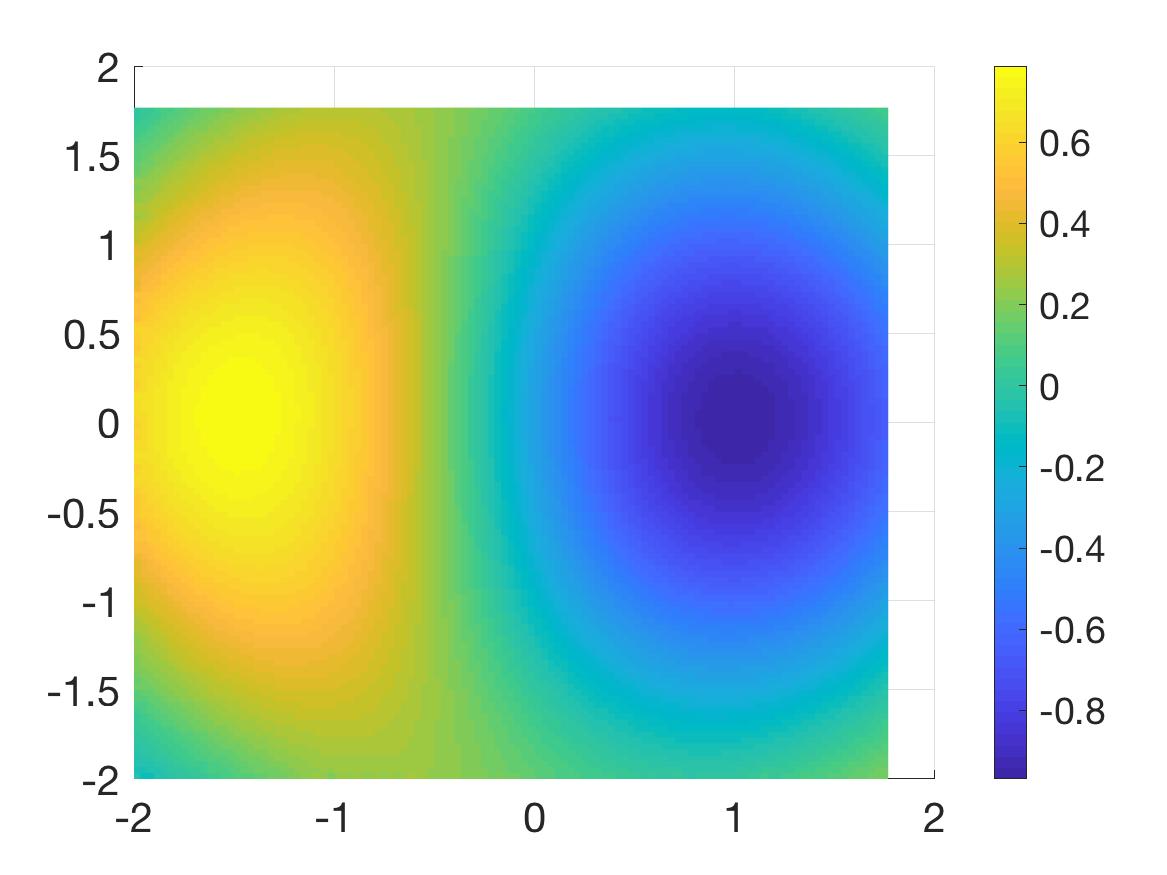}}
\end{center}
\caption{\textit{Test 1. The true and reconstructed source functions and the
true and reconstructed functions $v(\mathbf{x},k)=u(\mathbf{x},k)/g(k)$ when
$k=1.5.$ The reconstructed positive value of the source function is 2.76
(relative error 10.5\%). The reconstructed negative value of the source
function is -2.17 (relative error 8.5\%).
(A) The function $f_{\rm true}$; (B) The real part of the function $v_{\rm true}(\cdot, k = 1.5)$;
(C) The imaginary part of
the function $v_{\rm true}(\cdot, k = 1.5)$;
(D) The function $f_{\rm comp}$;
(E) The real part of the function $v_{\rm comp}(\cdot, k =
1.5)$;
(F) The imaginary part of the function $v_{\rm comp}(\cdot, k =
1.5)$.
}
}
\label{fig model 1}
\end{figure}

It is evident that, for this test, our method for \ref{ISP} provides good
numerical results. The reconstructed locations, shapes as well as the
positive/negative values of the function \textbf{\ }$f^{\mathrm{comp}}$ are
of a good quality.

\item \textit{Test 2.Problem \ref{ISP}. Four circular inclusions.} We
consider the case when the function $f_{\mathrm{true}}$ is given by
\begin{equation*}
\begin{split}
&f_{\mathrm{true}}=\left\{
\begin{array}{rl}
1, & \mbox{if}(x-0.8)^{2}+(y-0.8)^{2}<0.55^{2}\mbox{ or }%
(x+0.8)^{2}+(y-0.8)^{2}<0.55^{2}, \\
-1, & \mbox{if}(x-0.8)^{2}+(y+0.8)^{2}<0.55^{2}\mbox{ or }%
(x+0.8)^{2}+(y+0.8)^{2}<0.55^{2}, \\
0, & \mbox{otherwise,}%
\end{array}%
\right.
\end{split}
\end{equation*}%
and $g_{\mathrm{true}}(k)=1$ for all $k\in \lbrack \underline{k},\overline{k}%
].$ We test the model with four circular inclusions. The source function $%
f=1 $ in the two \textquotedblleft upper" inclusion and $f=-1$ in the two
\textquotedblleft lower" inclusions.

The reconstruction is displayed in Figure \ref{fig model 2}. The source
function is reconstructed well in the sense of locations, shapes and values.

\begin{figure}[h]
\begin{center}
\subfloat[]{\includegraphics[width =
.3\textwidth]{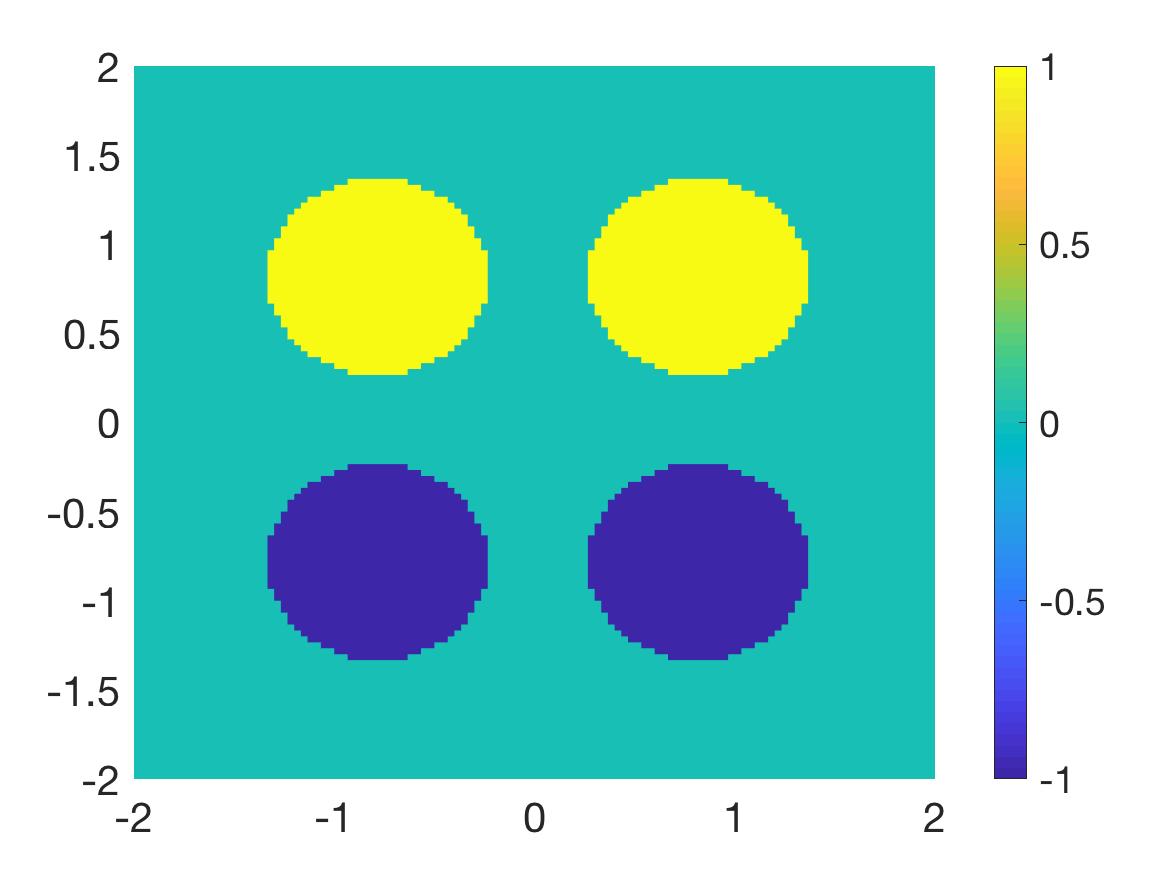}}\quad
\subfloat[]{\includegraphics[width =
.3\textwidth]{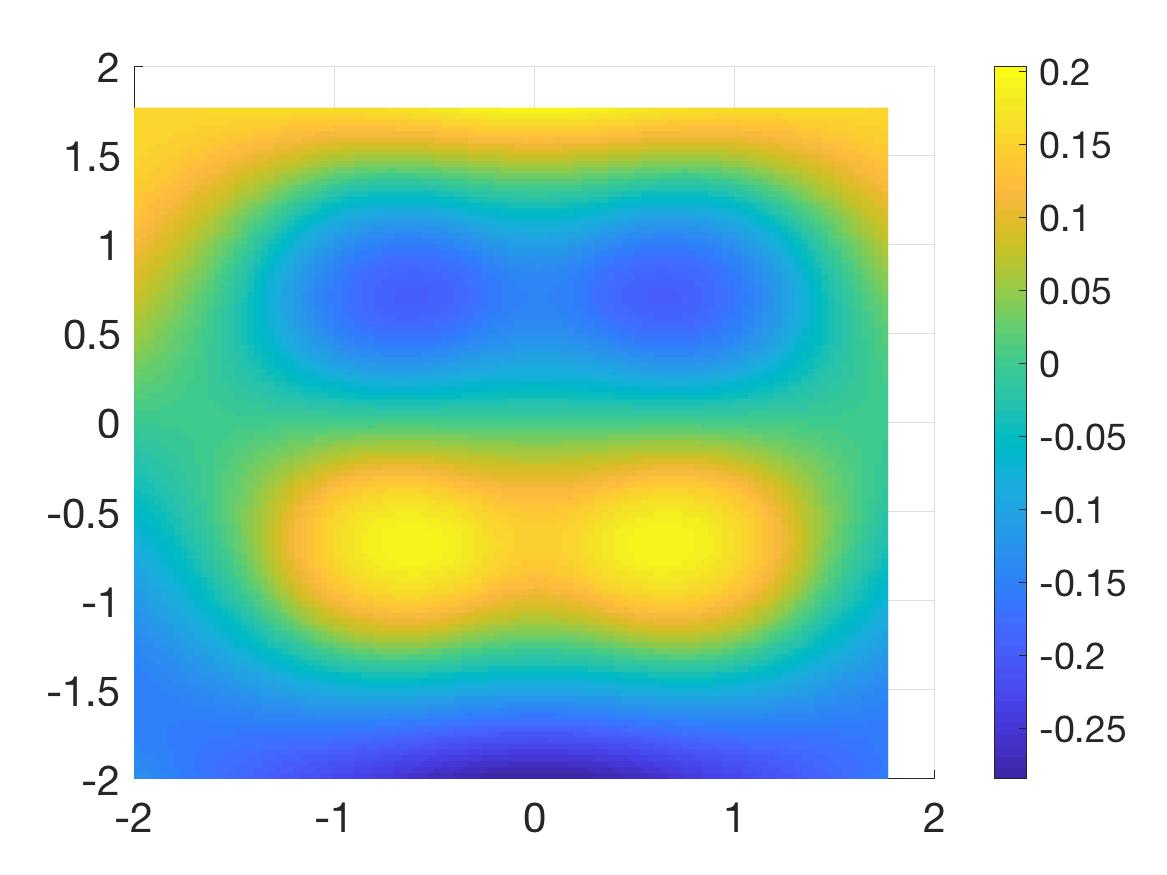}} \quad
\subfloat[]{\includegraphics[width =
.3\textwidth]{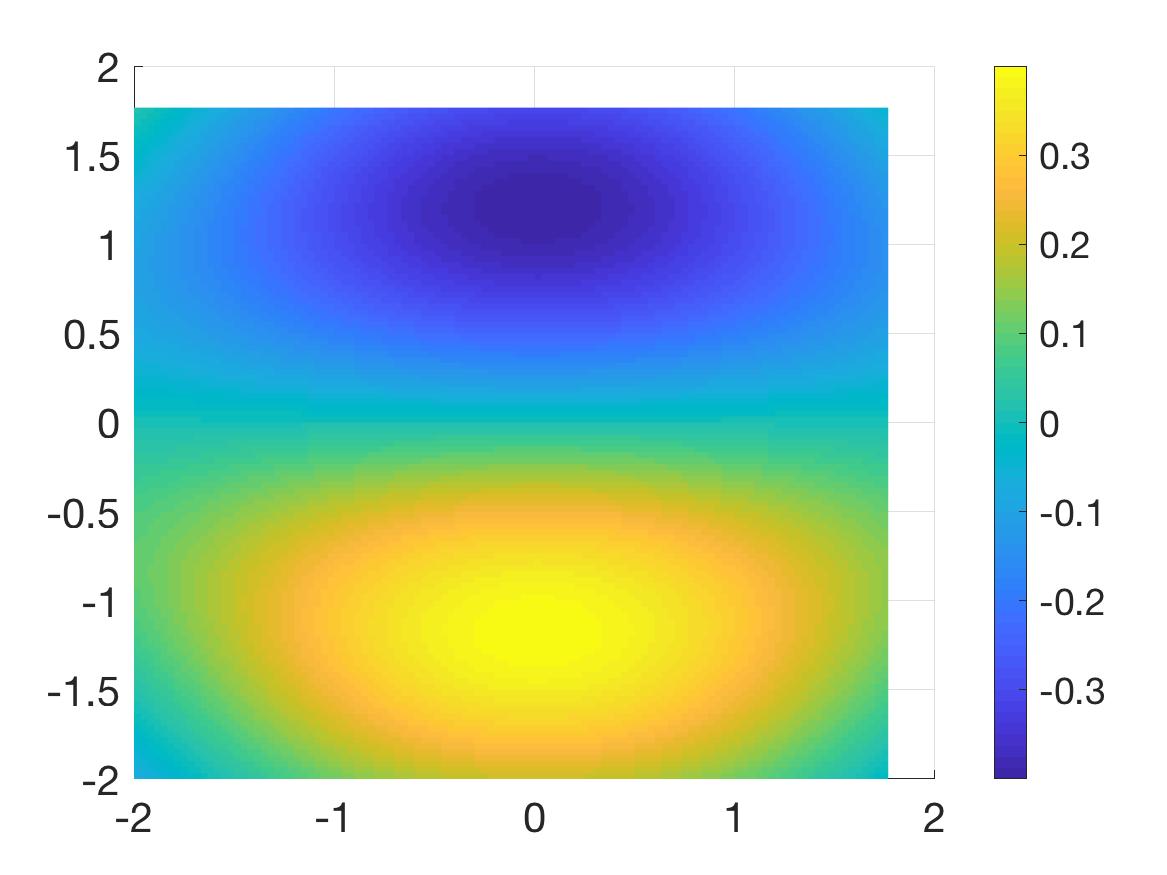}}

\subfloat[]{\includegraphics[width =
.3\textwidth]{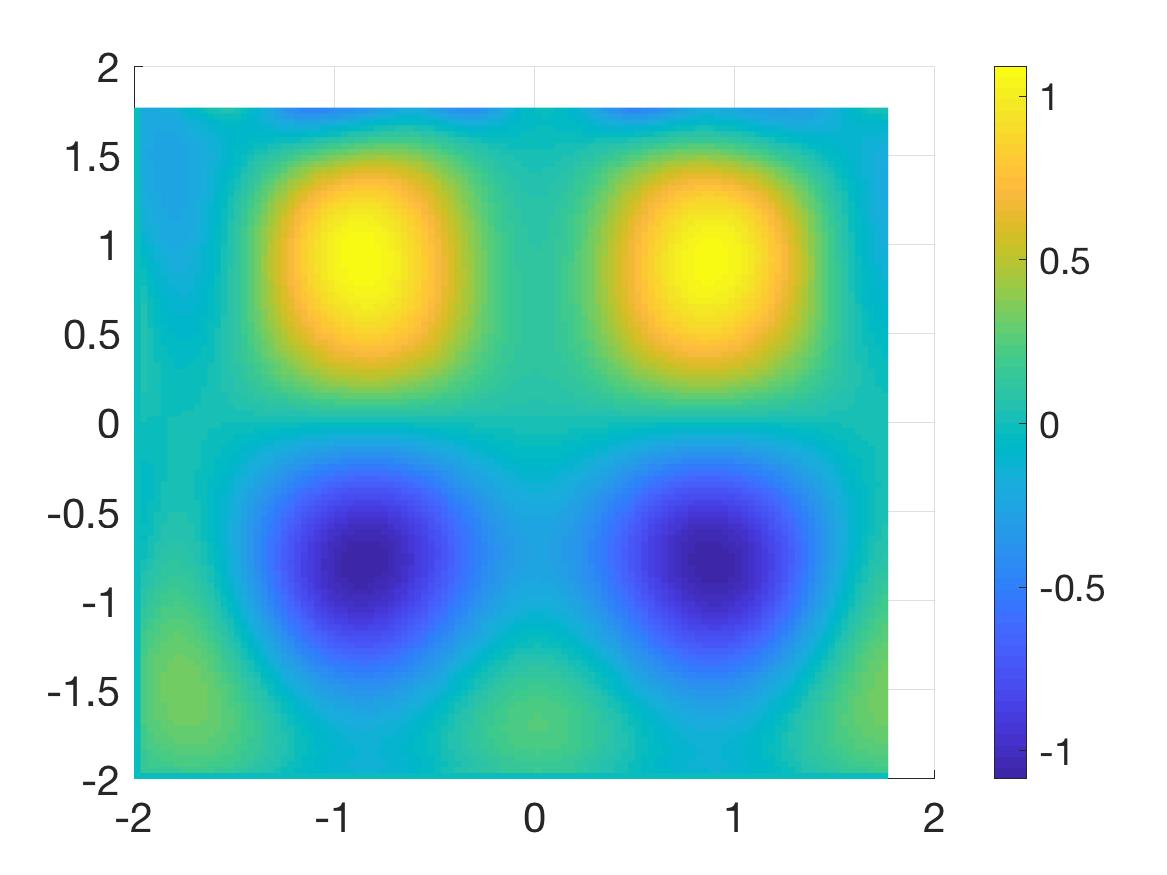}}
\quad
\subfloat[]{\includegraphics[width = .3\textwidth]{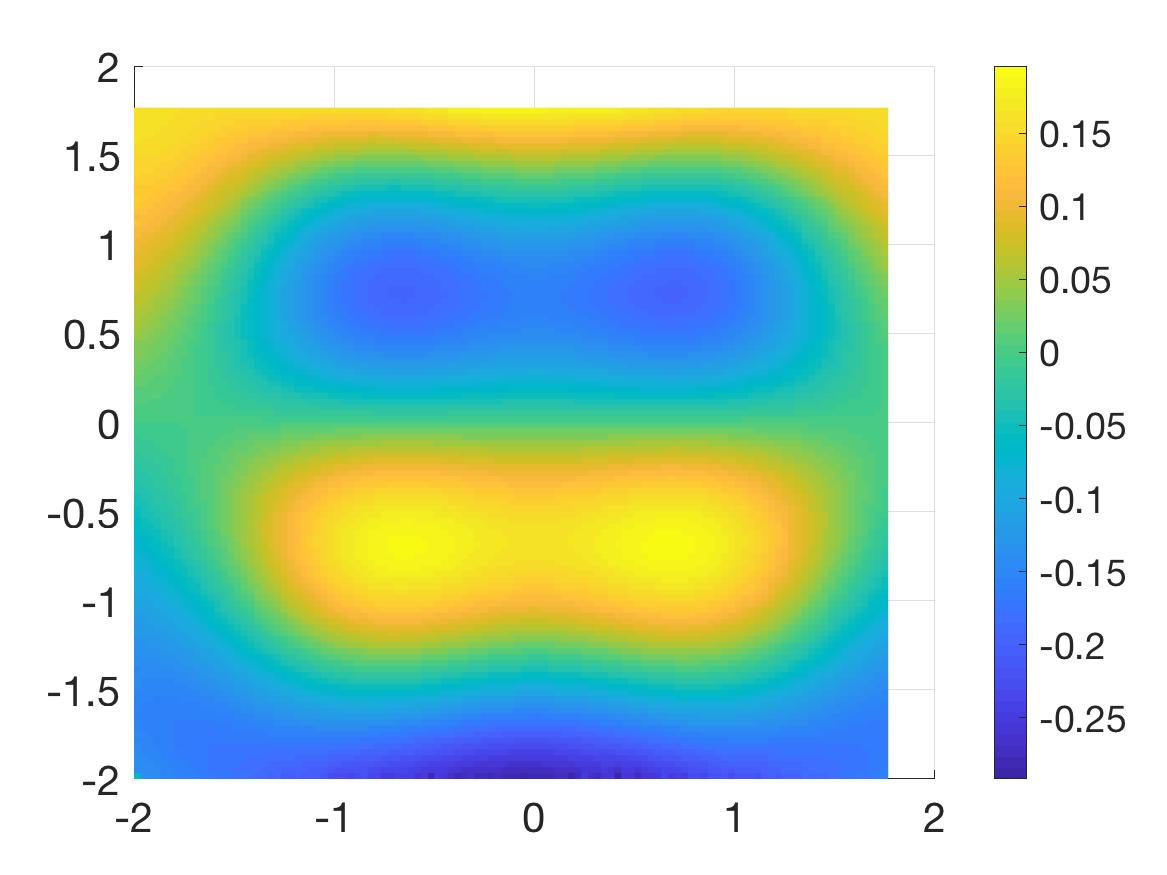}} \quad
\subfloat[]{\includegraphics[width = .3\textwidth]{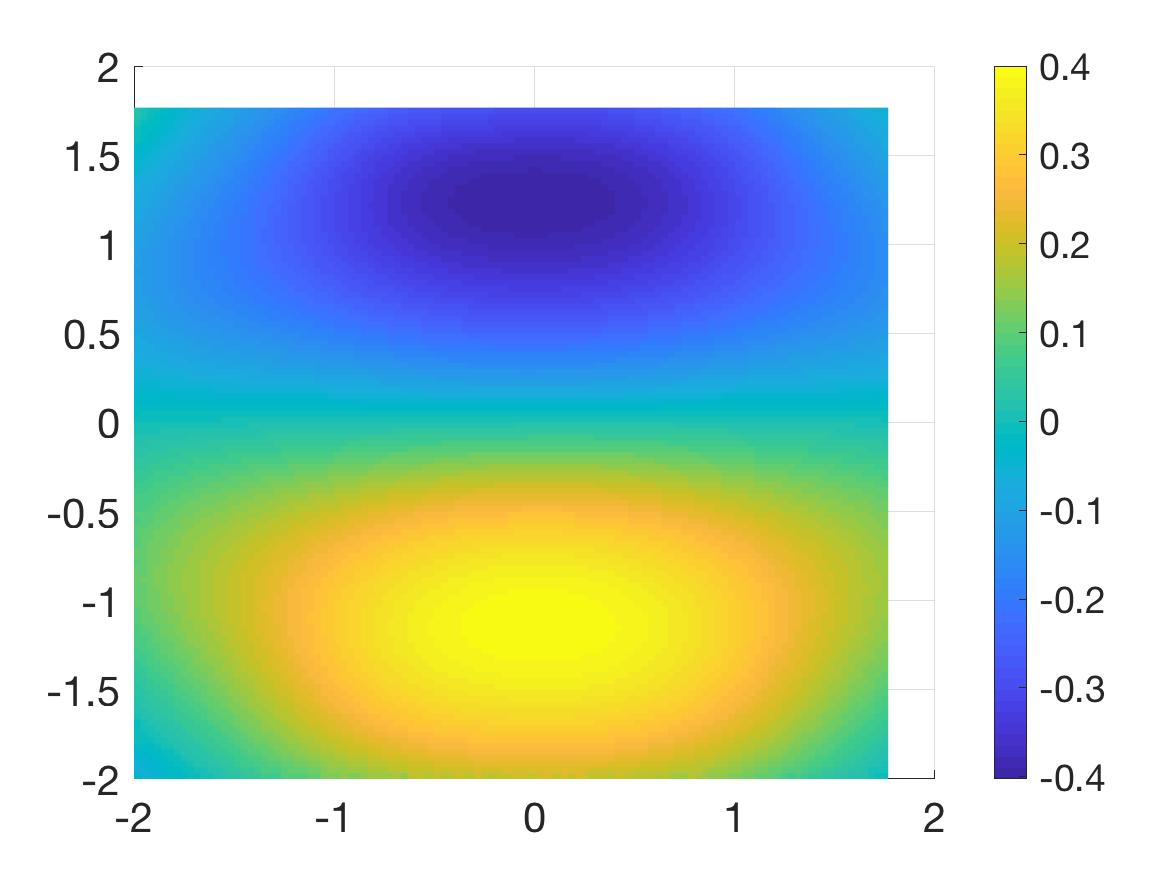}}
\end{center}
\caption{\textit{Test 2. The true and reconstructed source functions and the
true and reconstructed functions $v(\mathbf{x},k)=u(\mathbf{x},k)/g(k)$ when
$k=1.5.$ The reconstructed positive value of the source function is 1.11
(relative error 11.1\%). The reconstructed negative value of the source
function is -1.11 (relative error 11.1\%).
A) The function $f_{\rm true}$; (B) The real part of the function $v_{\rm true}(\cdot, k = 1.5)$;
(C) The imaginary part of
the function $v_{\rm true}(\cdot, k = 1.5)$;
(D) The function $f_{\rm comp}$;
(E) The real part of the function $v_{\rm comp}(\cdot, k =
1.5)$;
(F) The imaginary part of the function $v_{\rm comp}(\cdot, k =
1.5)$.
}}
\label{fig model 2}
\end{figure}

\item \textit{Test 3. Problem \ref{ISP1}. A void in the square.} We consider
the case when the negative part of the true source function $f$ is
surrounded by a square and $f$ is positive in this square. More precisely,
\begin{equation*}
f_{\mathrm{true}}=\left\{
\begin{array}{rl}
1 & \mbox{if }\max \{|x|,|y|\}<1.2\mbox{ and }x^{2}+y^{2}\geq 0.48^{2}, \\
-1 & \mbox{if }x^{2}+y^{2}<0.48^{2}, \\
0 & \mbox{otherwise,}%
\end{array}%
\right.
\end{equation*}%
and $g_{\mathrm{true}}(k)=k$ for all $k\in \lbrack \underline{k},\overline{k}%
].$

\begin{figure}[h]
\begin{center}
\subfloat[]{\includegraphics[width =
.3\textwidth]{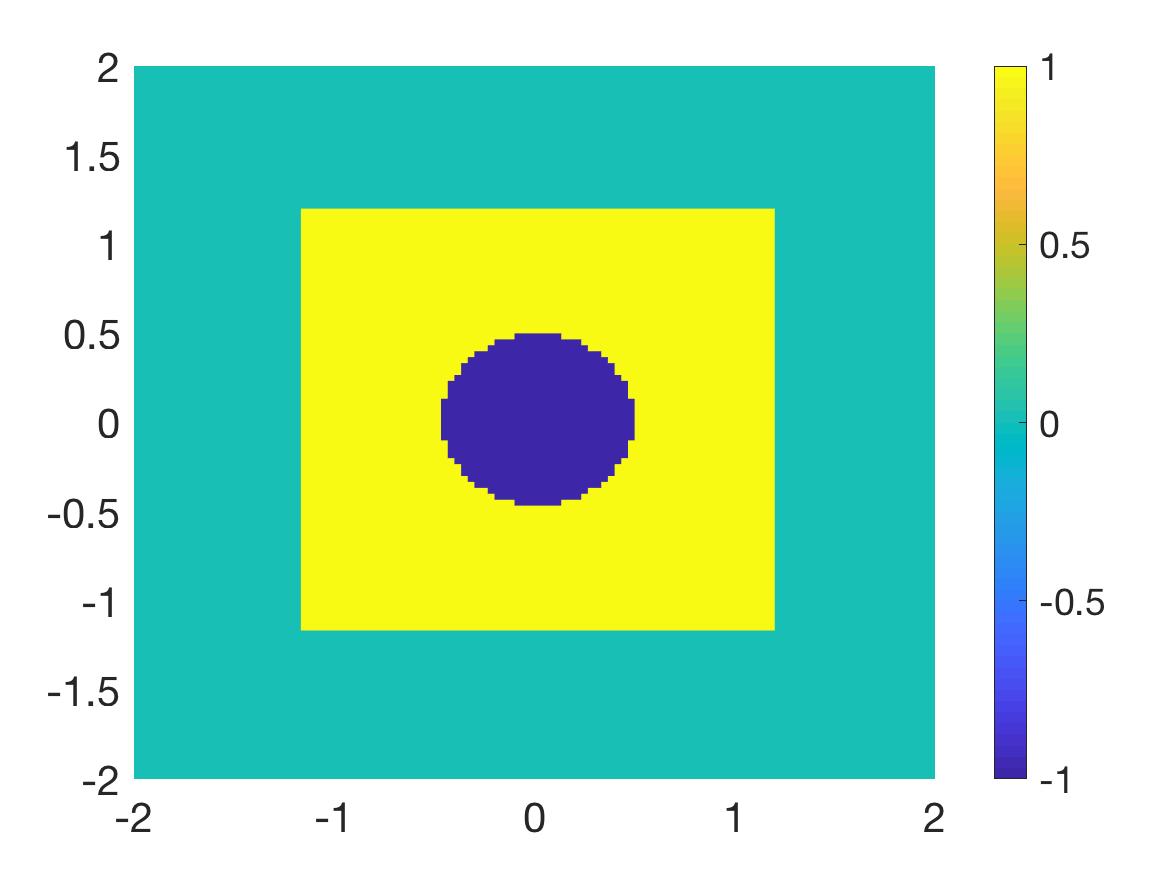}}\quad
\subfloat[]{\includegraphics[width =
.3\textwidth]{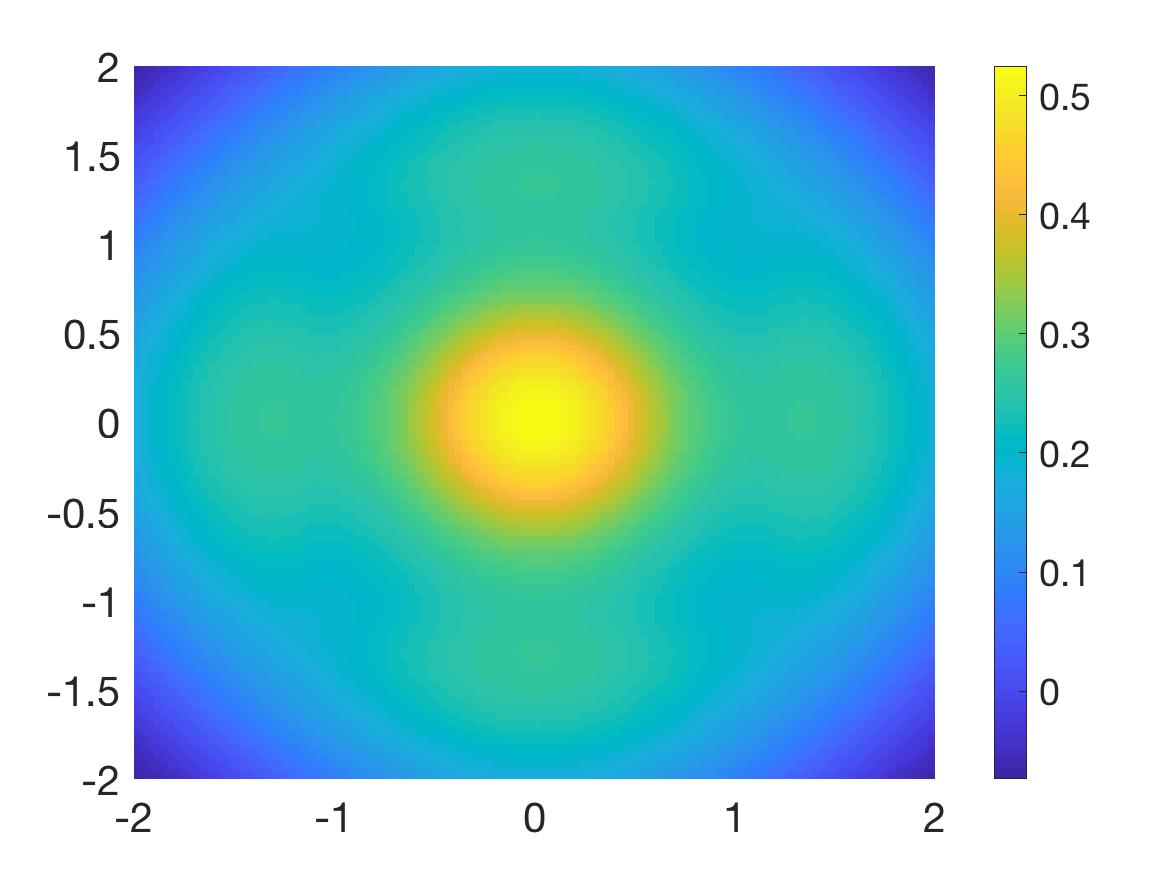}} \quad
\subfloat[]{\includegraphics[width =
.3\textwidth]{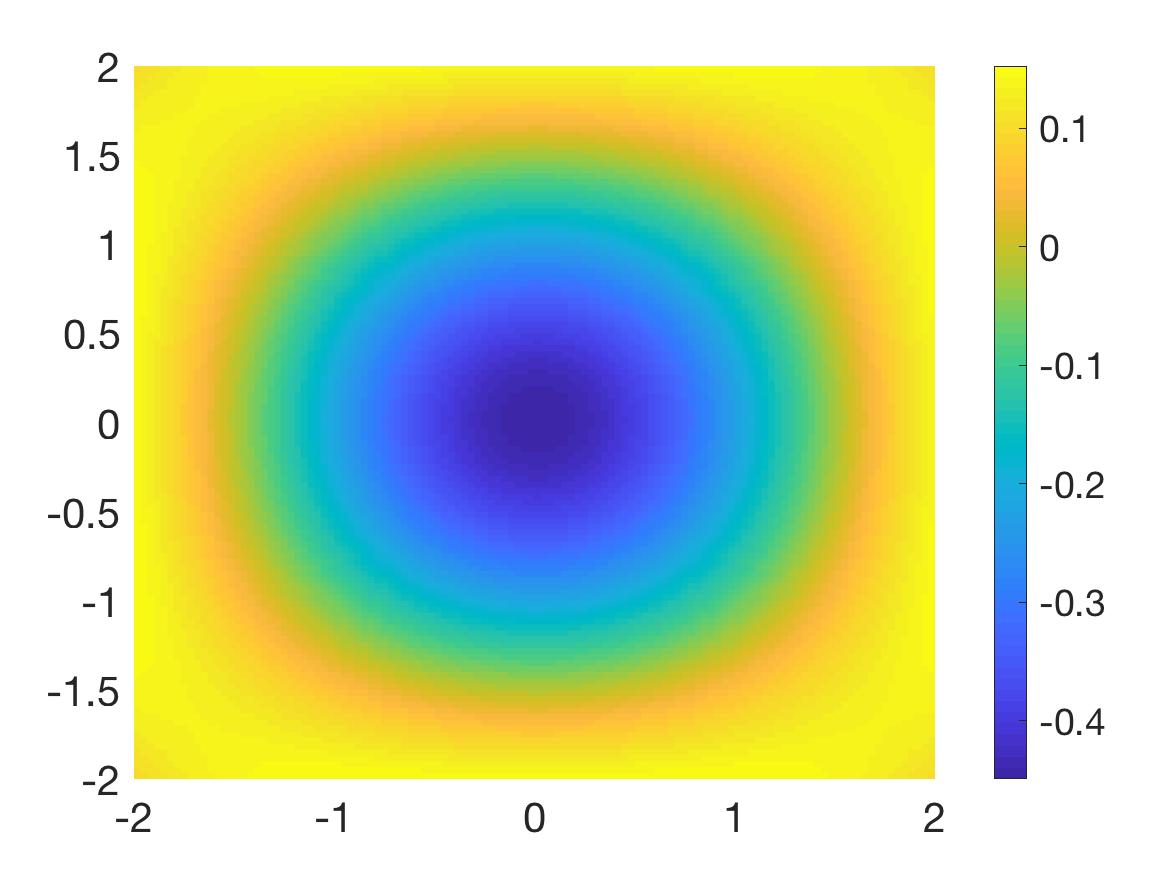}}

\subfloat[]{\includegraphics[width =
.3\textwidth]{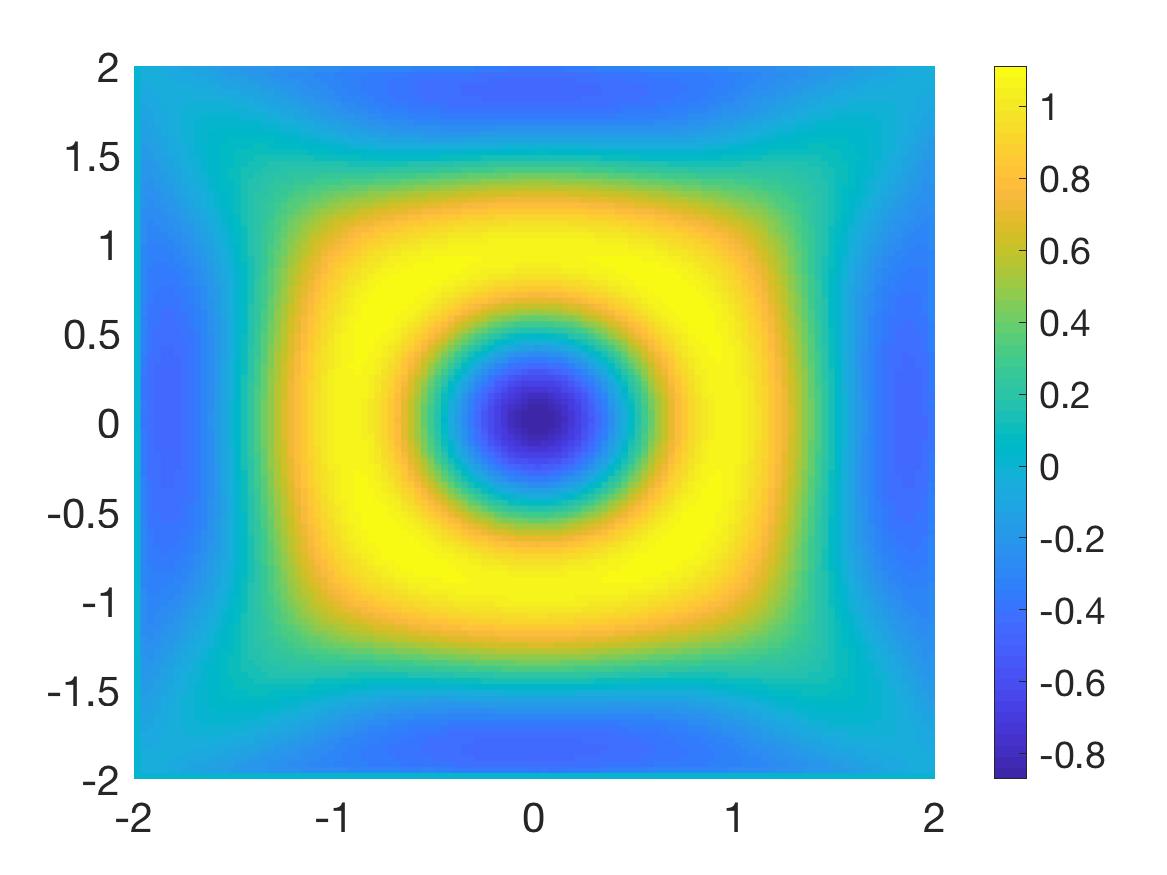}}\quad
\subfloat[]{\includegraphics[width = .3\textwidth]{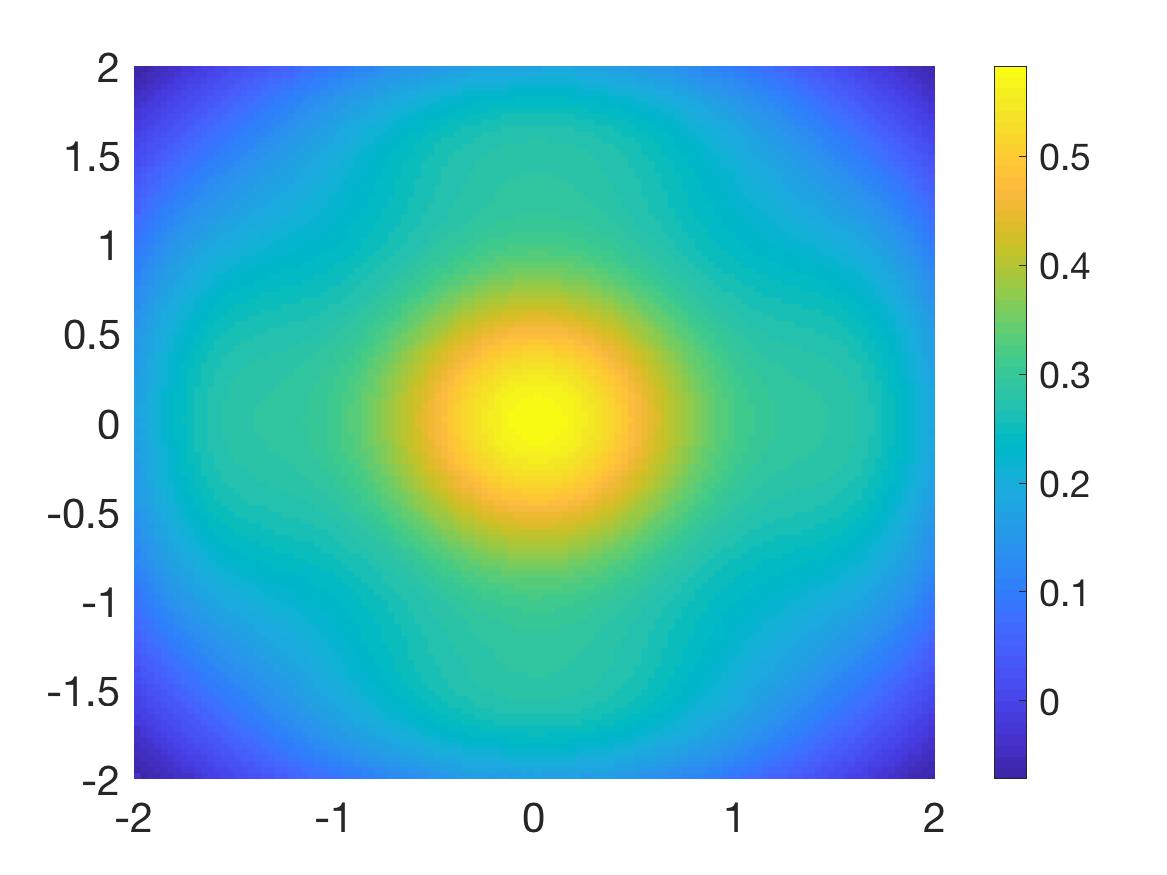}}\quad
\subfloat[]{\includegraphics[width = .3\textwidth]{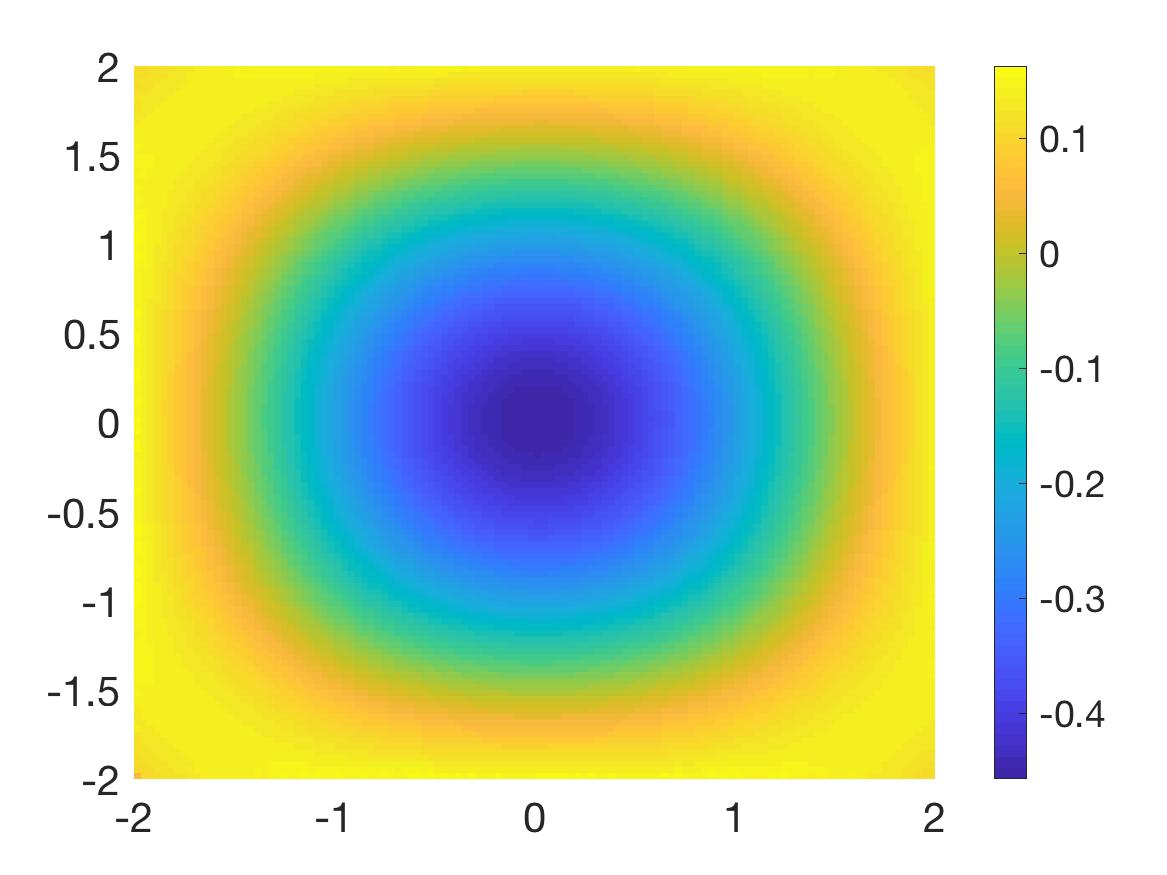}}
\end{center}
\caption{\textit{Test 3. The true and reconstructed source functions and the
true and reconstructed functions $v(\mathbf{x},k)=u(\mathbf{x},k)/g(k)$ when
$k=1.5.$ The reconstructed positive value of the source function is 1.09
(relative error 9.0\%). The reconstructed negative value of the source
function is -0.89 (relative error 11.0\%).
A) The function $f_{\rm true}$; (B) The real part of the function $v_{\rm true}(\cdot, k = 1.5)$;
(C) The imaginary part of
the function $v_{\rm true}(\cdot, k = 1.5)$;
(D) The function $f_{\rm comp}$;
(E) The real part of the function $v_{\rm comp}(\cdot, k =
1.5)$;
(F) The imaginary part of the function $v_{\rm comp}(\cdot, k =
1.5)$.}}
\label{fig model 3}
\end{figure}

The true $f_{\text{true}}$ and computed $f^{\text{comp}}$ source functions
are displayed in Figure \ref{fig model 3}. We can see computed shapes of the
\textquotedblleft positive" square and the \textquotedblleft negative" disk
are quite acceptable. Given that the noise in the data is 5\%, errors in
values of the function $f^{\text{comp}}$ are also acceptable.

\item \textit{Test 4. Problem \ref{ISP1}. Ring.} We consider a model that is
similar to that in the previous test. The main difference is the ``outer
positive" part of the true source function is a ring rather than a square.
The function $f_{\mathrm{true}}$ is
\begin{equation}
f_{\mathrm{true}}=\left\{
\begin{array}{rl}
1 & \mbox{if }0.52^{2}<x^{2}+y^{2}<1.2^{2}, \\
-2 & \mbox{if }x^{2}+y^{2}\leq 0.52^{2}, \\
0 & \mbox{otherwise,}%
\end{array}%
\right.
\end{equation}%
and $g_{\mathrm{true}}(k)=k^{2}$ for all $k\in \lbrack \underline{k},%
\overline{k}].$

\begin{figure}[h]
\begin{center}
\subfloat[]{\includegraphics[width =
.3\textwidth]{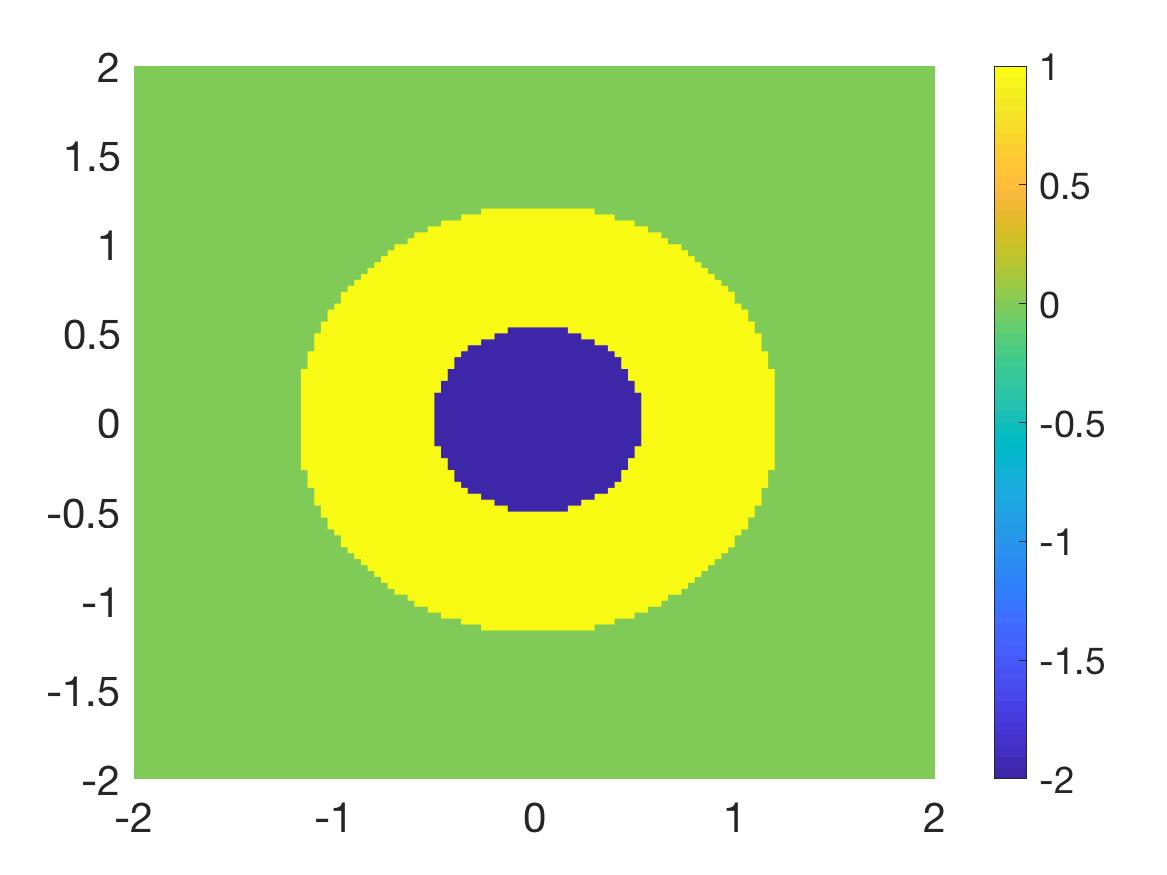}}\quad
\subfloat[]{\includegraphics[width =
.3\textwidth]{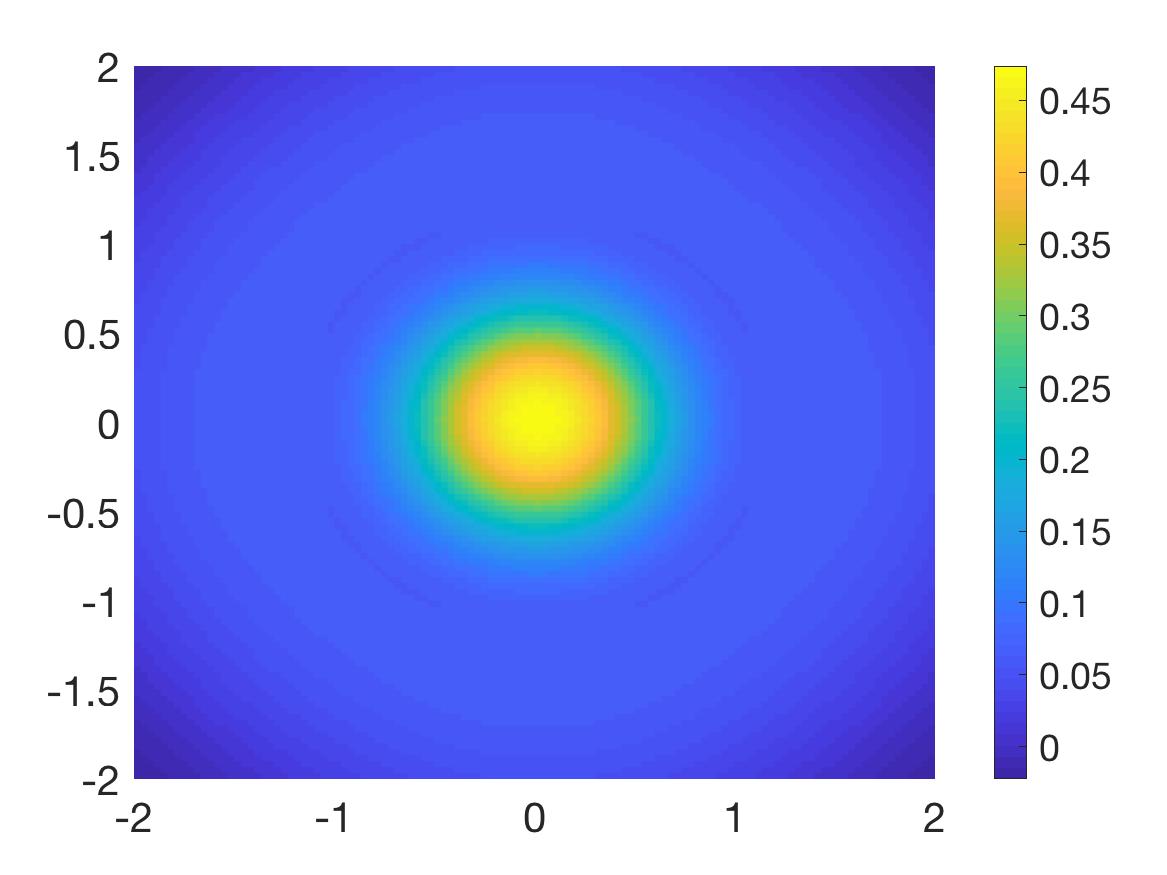}} \quad
\subfloat[]{\includegraphics[width =
.3\textwidth]{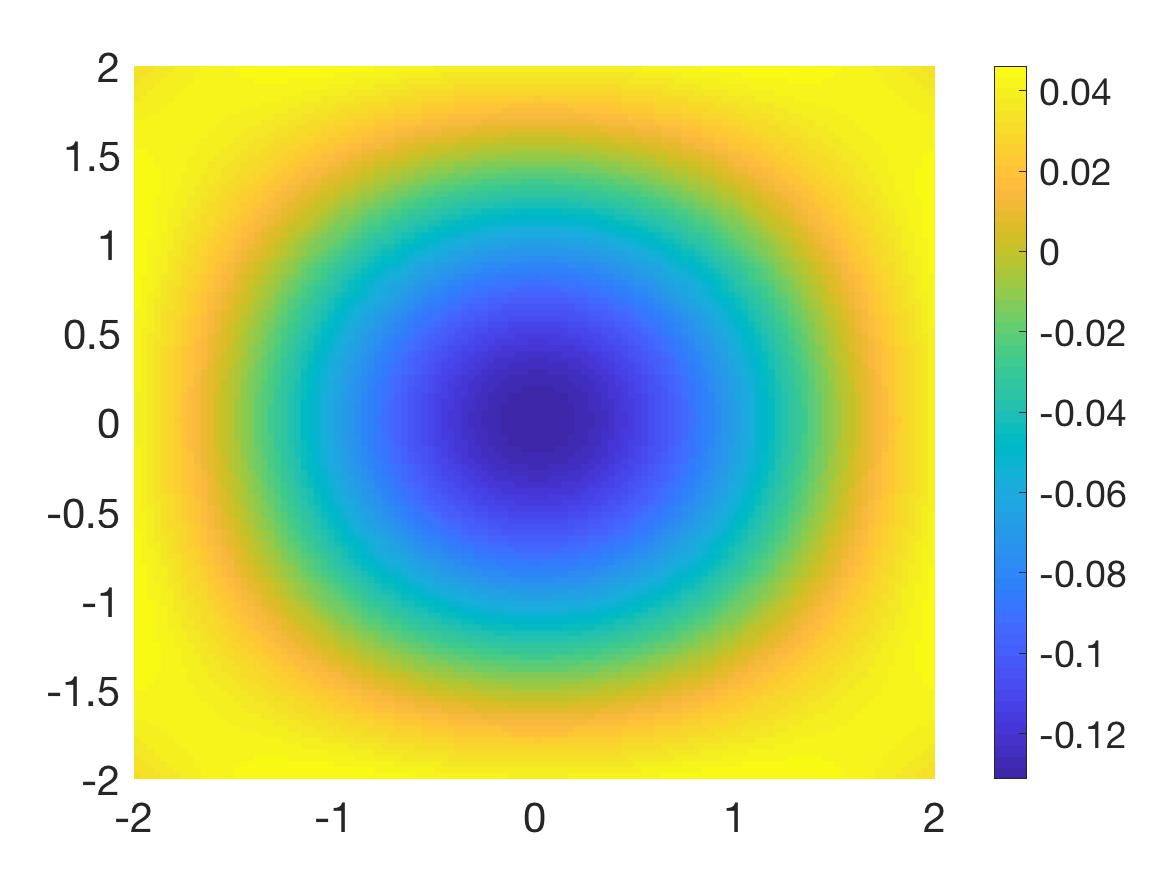}}

\subfloat[]{\includegraphics[width =
.3\textwidth]{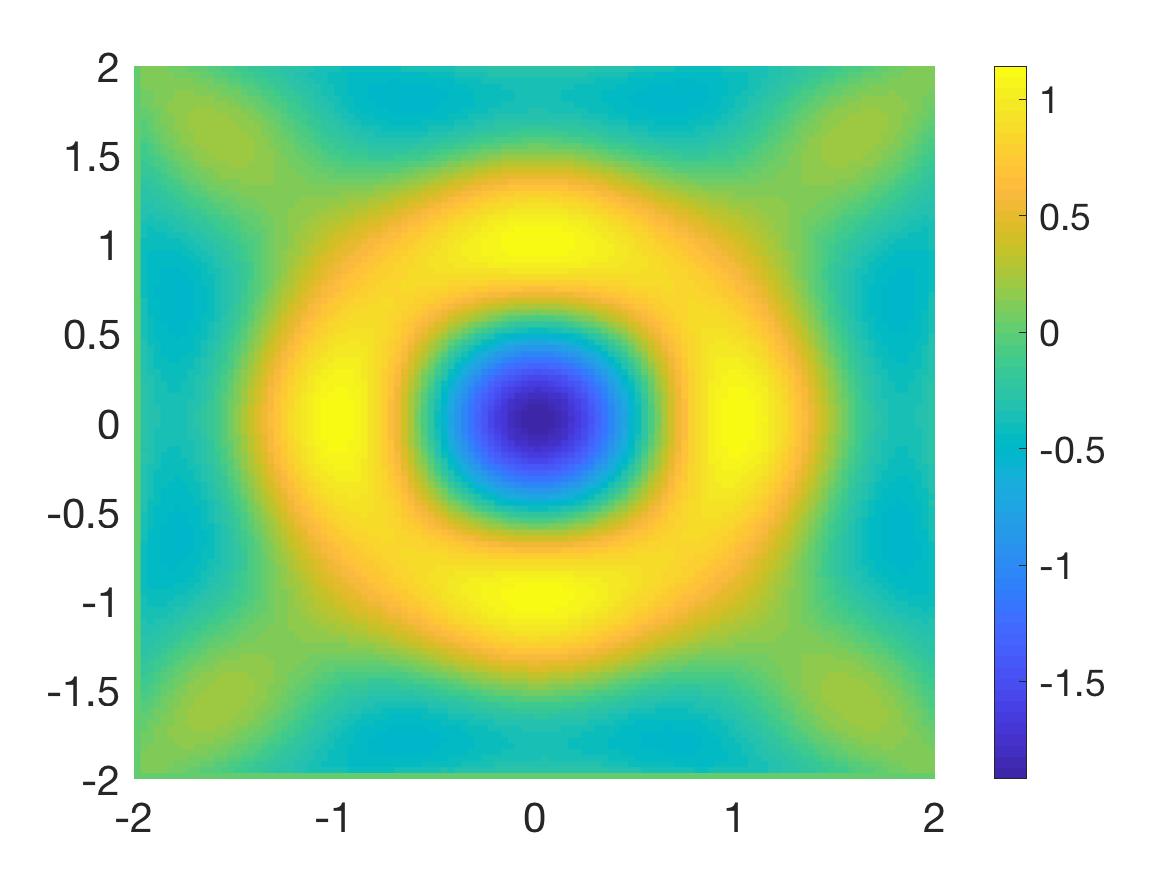}}
\quad
\subfloat[]{\includegraphics[width = .3\textwidth]{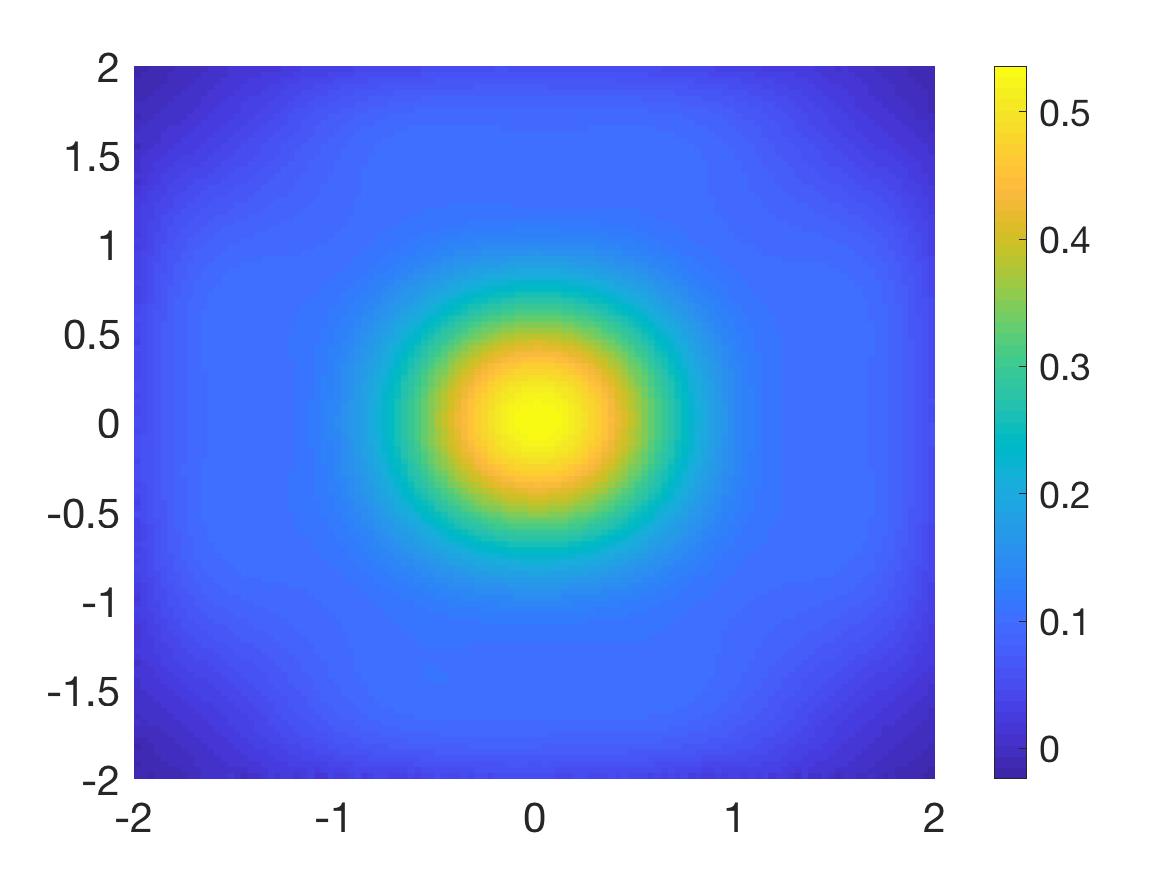}} \quad
\subfloat[]{\includegraphics[width = .3\textwidth]{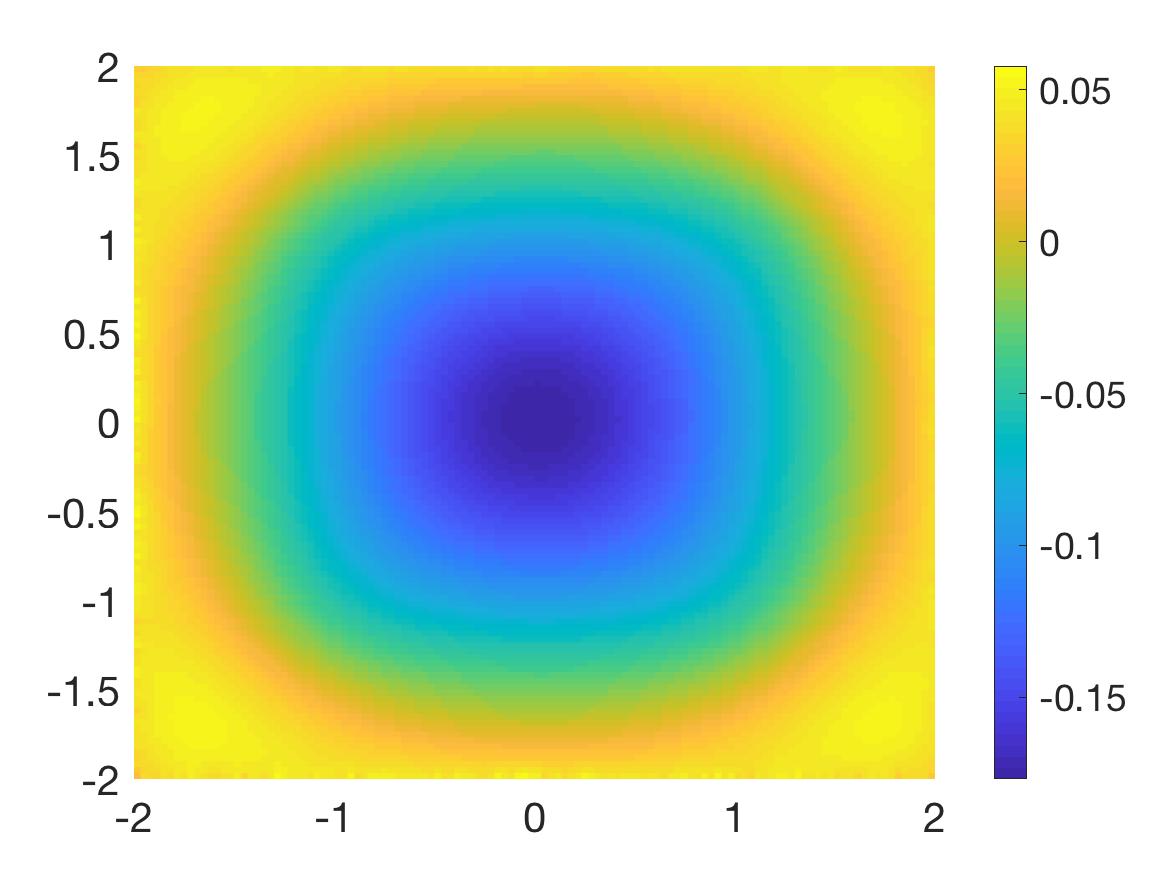}}
\end{center}
\caption{\textit{Test 4. The true and reconstructed source functions and the
true and reconstructed functions $v(\mathbf{x},k)=u(\mathbf{x},k)/g(k)$ when
$k=1.5.$ The reconstructed positive value of the source function is 1.12
(relative error 12.0\%). The reconstructed negative value of the source
function is -1.94 (relative error 3.0\%).
A) The function $f_{\rm true}$; (B) The real part of the function $v_{\rm true}(\cdot, k = 1.5)$;
(C) The imaginary part of
the function $v_{\rm true}(\cdot, k = 1.5)$;
(D) The function $f_{\rm comp}$;
(E) The real part of the function $v_{\rm comp}(\cdot, k =
1.5)$;
(F) The imaginary part of the function $v_{\rm comp}(\cdot, k =
1.5)$.}}
\label{fig model 4}
\end{figure}

In Figure \ref{fig model 4}, one can see that the source function is
computed rather accurately. The values of both \textquotedblleft positive"
and \textquotedblleft negative" parts of the inclusion are computed with a
good accuracy.

\item \textit{Test 5. \ref{ISP1}. Continuous surface.} We take for $\left(
x,y\right) \in \Omega $
\begin{equation*}
f_{\mathrm{true}%
}=3(1-x)^{2}e^{-x^{2}}-(y+1)^{2}-10(x/5-x^{3}-y^{5})e^{-x^{2}-y^{2}}-1/3e^{-(x+1)^{2}-y^{2}},
\end{equation*}%
which is the function \textquotedblleft peaks" built-in Matlab, restricted
on $\Omega $. This function is interesting since its support is not
compactly contained in $\Omega $ and its graph behaves as a surface rather
than the \textquotedblleft inclusion" from the previous tests. We set $g_{%
\mathrm{true}}(k)=\sin (k)+2$ for all $k\in \lbrack \underline{k},\overline{k%
}].$
\begin{figure}[h]
\begin{center}
\subfloat[]{\includegraphics[width =
.3\textwidth]{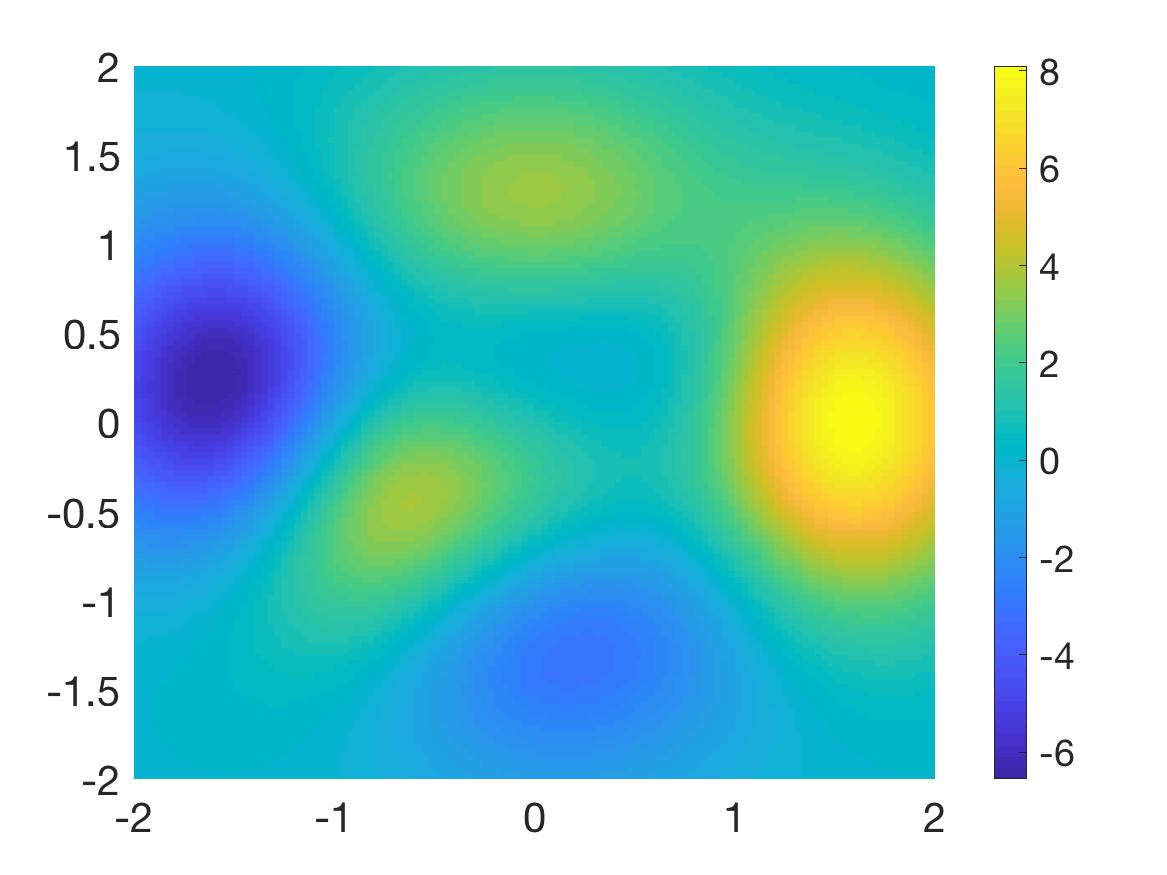}}\quad
\subfloat[]{\includegraphics[width =
.3\textwidth]{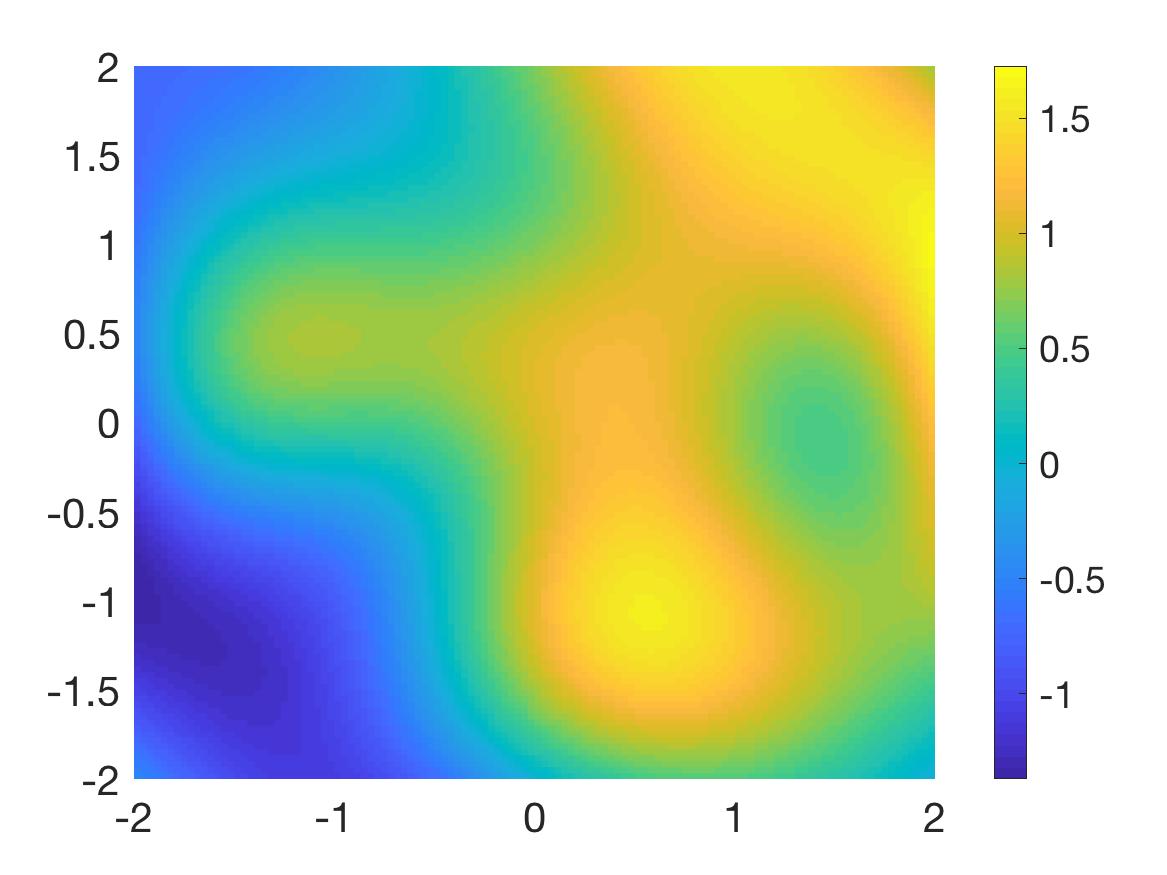}} \quad
\subfloat[]{\includegraphics[width =
.3\textwidth]{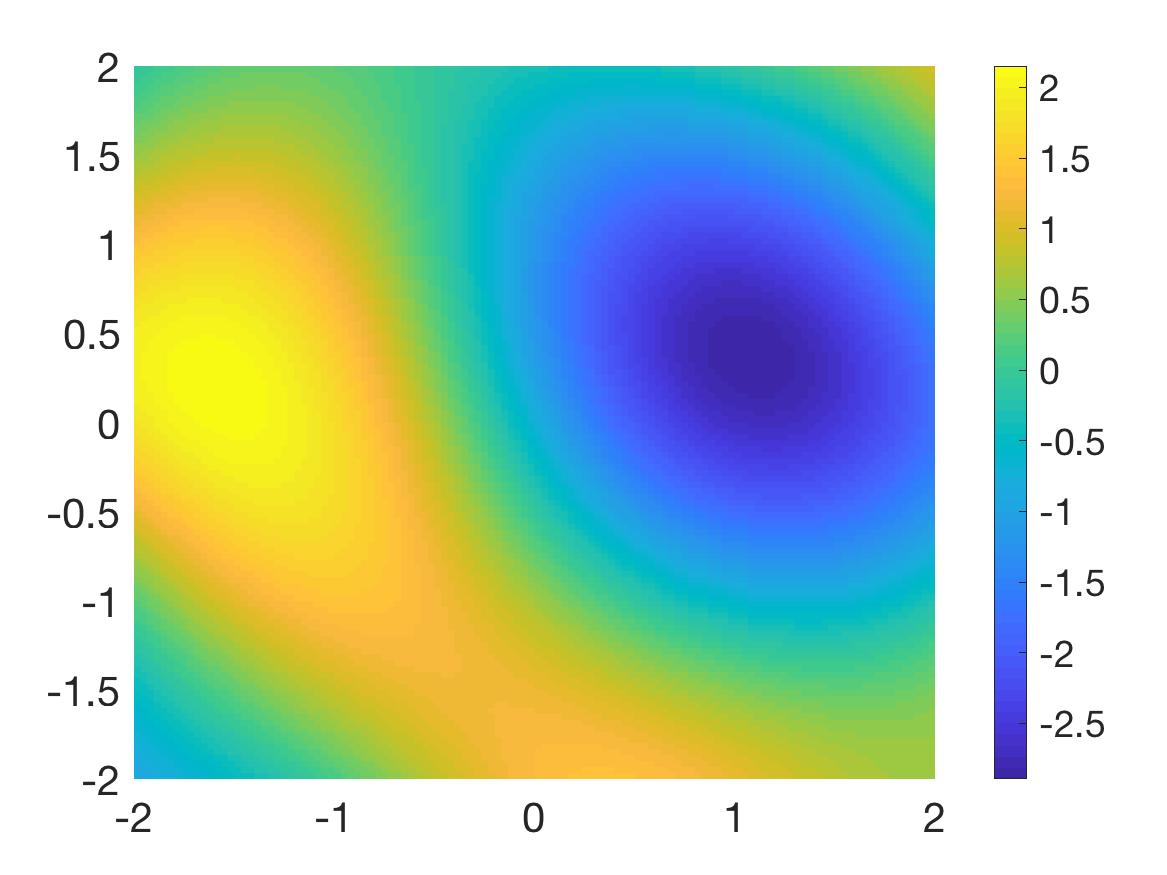}}

\subfloat[]{\includegraphics[width =
.3\textwidth]{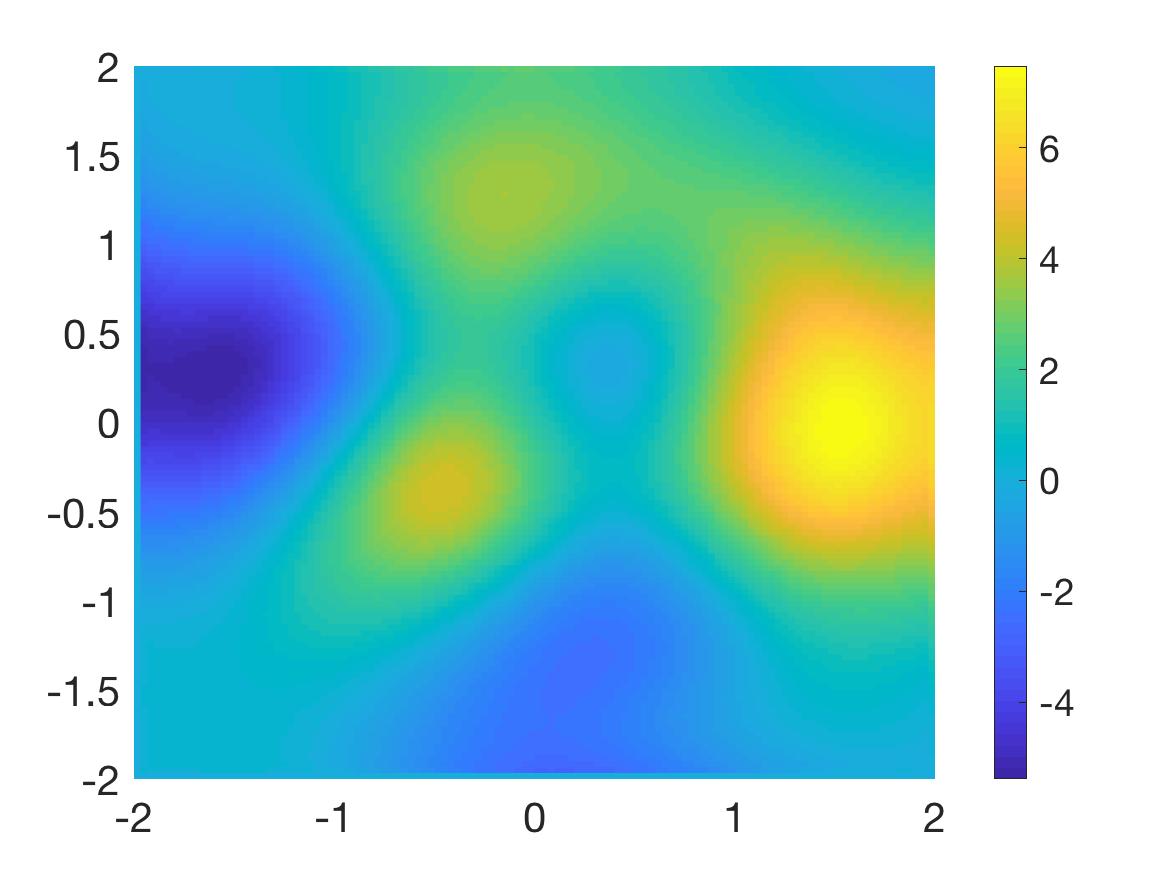}}
\quad
\subfloat[]{\includegraphics[width = .3\textwidth]{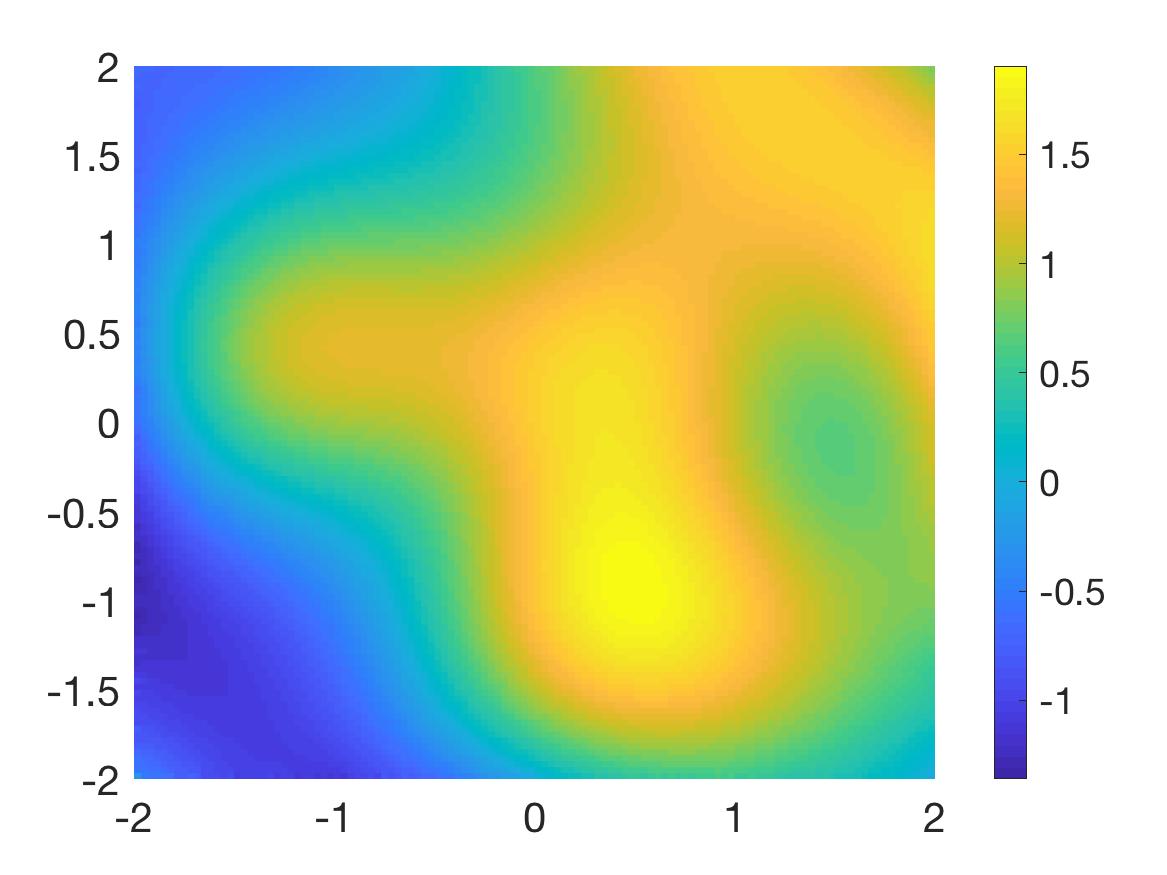}}
\quad
\subfloat[]{\includegraphics[width = .3\textwidth]{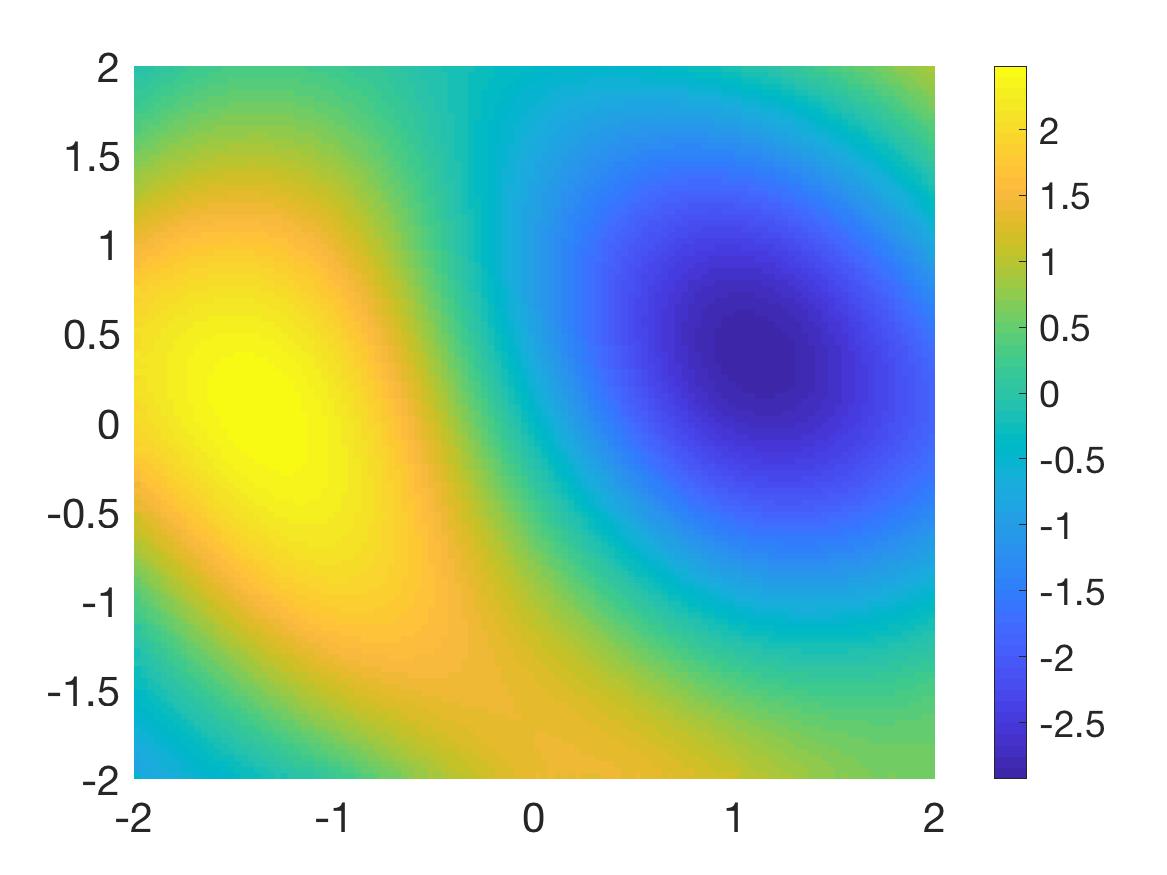}}
\end{center}
\caption{\textit{Test 5. The true and reconstructed source functions and the
true and reconstructed functions $v(\mathbf{x},k)=u(\mathbf{x},k)/g(k)$ when
$k=1.5.$ The true and reconstructed maximal positive value of the source
function are 8.10 and 7.36 (relative error 9.1\%) respectively. The true and
reconstructed minimal negative value of the source function are -6.55 and
-5.48 (relative error 16.0\%) respectively.
A) The function $f_{\rm true}$; (B) The real part of the function $v_{\rm true}(\cdot, k = 1.5)$;
(C) The imaginary part of
the function $v_{\rm true}(\cdot, k = 1.5)$;
(D) The function $f_{\rm comp}$;
(E) The real part of the function $v_{\rm comp}(\cdot, k =
1.5)$;
(F) The imaginary part of the function $v_{\rm comp}(\cdot, k =
1.5)$.}}
\label{fig model 5}
\end{figure}
\end{enumerate}

The numerical results for this test are displayed in Figure \ref{fig model 5}%
. It is evident that our method works well for this interesting case.

\section*{Acknowledgement}
The work of Nguyen and Klibanov was supported by US Army Research Laboratory and US Army Research Office grant W911NF-19-1-0044. In addition, the effort of Nguyen and Li was supported by research funds FRG 111172 provided by The University of North Carolina at Charlotte.

\providecommand{\href}[2]{#2}
\providecommand{\arxiv}[1]{\href{http://arxiv.org/abs/#1}{arXiv:#1}}
\providecommand{\url}[1]{\texttt{#1}}
\providecommand{\urlprefix}{URL }


\end{document}